\newlength{\notewidth}
\newtheorem{lemma}{Lemma}[section]
\newtheorem{proposition}[lemma]{Proposition}
\newtheorem{theorem}{Theorem}
\newtheorem{definition}[lemma]{Definition}
\theoremstyle{definition}
\newtheorem{remark}[lemma]{Remark}
\newcommand{\e}{{\mathbf e}}
\renewcommand{\r}{{\mathbf{r}}}
\newcommand{\z}{{\mathbf{z}}}
\newcommand{\hb}{\mathbf{h} }
\newcommand{\diag}{\mathrm{diag}}
\newcommand{\SL}{\mathrm{SL}}
\newcommand{\GL}{\mathrm{GL}}
\newcommand{\SLt}{{\SL_2(\R)}}
\newcommand{\End}{{\rm End}}
\newcommand{\Ker}{{\rm Ker}}
\newcommand{\SUt}{{\mathrm{SU}_2 }}
\newcommand{\SE}{\mathrm{SE}_2 }
\newcommand{\Span}{\mathrm{Span}}
\newcommand{\id}{\mathrm{id}}
\newcommand{\slt}{\mathfrak{sl}_2(\R)}
\newcommand{\g}{\mathfrak{g}}
\newcommand{\su}{\mathfrak{su}}
\newcommand{\h}{{\mathfrak h}}
\newcommand{\se}{\mathfrak{se}_2 }
\newcommand{\cE}{\mathcal E}
\newcommand{\R}{\mathbb{R}}
\newcommand{\C}{\mathbb{C}}
\renewcommand{\H}{\mathbb{H}}
\newcommand{\Z}{\mathbb{Z}}
\newcommand{\RP}{\R P}
\newcommand{\st}{\, | \,}
\newcommand{\n}{\noindent}
\newcommand{\bs}{\bigskip}
\newcommand{\mn}{\medskip\noindent}
\newcommand{\bn}{\bs\n}
\newcommand{\be}{\begin{equation}}
\newcommand{\ee}{\end{equation}}
\renewcommand{\d}{\mathrm{d}}
\newcommand{\PSL}{\mathrm{ PSL}}
\newcommand{\SU}{\mathrm{SU}}
\renewcommand{\>}{\rangle}
\newcommand{\<}{\langle}
\newcommand{\etab}{\eta}
 \newcommand{\benum}{\begin{enumerate}[label=$(\mathrm{\alph*})$, left=-5px, itemsep=-2px]}
\newcounter{ticount}
\newcommand{\s}{\small}
\newcommand{\M}{P}
\newcommand{\ad}{\mathrm{ad}}
\newcommand{\grm}{\mathrm{g}}
\renewcommand{\S}{\mathbb S}
\renewcommand{\P}{\mathbb P}
\title {Chains of path geometries on surfaces:\\ theory and examples}
\author{Gil Bor\footnote{
CIMAT, A.P. 402, Guanajuato, Gto. 36000, Mexico;
gil@cimat.mx
}
\and
Travis Willse\footnote{
Guilford College, 5800 W Friendly Ave, Greensboro, NC 27410, USA;
twillse@guilford.edu
}
}
\date{\today}
\begin{document}
\maketitle

\begin{abstract}
We derive the equations of chains for path geometries on surfaces by solving  the equivalence problem of a related structure: sub-Riemannian geometry of signature $(1,1)$ on a contact 3-manifold. This approach is significantly simpler than the standard method of solving the full equivalence problem for path geometry. We then use these equations to give a characterization of projective path geometries in terms of their chains (the chains projected to the surface coincide with the paths) and study the chains of four examples of homogeneous
 path geometries. In one of these examples (horocycles in the hyperbolic planes) the projected chains are bicircular quartics.

\end{abstract}
\tableofcontents
\section{Introduction}
\subsection{A quick reminder about path geometries on surfaces}

A {\em path geometry} on a surface  consists of a surface $\Sigma$ (a  2-dimensional differentiable manifold)  together with a non-degenerate 2-parameter family of {\em unparametrized} curves in $\Sigma.$\footnote{This definition  will be reformulated below more abstractly and precisely; in particular, the non-degeneracy condition will be spelled out.}An {\em equivalence} of path geometries on  two surfaces  is a diffeomorphism of the surfaces which maps the paths of one surface  onto those of the other. A {\em symmetry} of a path geometry on a surface is a self-equivalence.

The basic example is $\Sigma=\RP^2$ (the $2$-dimensional real projective plane) equipped with the family of straight lines in it. A path geometry\footnote{We shall henceforth usually drop the qualifier ``on a surface'' since that is the only situation this article considers.} which is locally equivalent to this example is called {\em flat}. A less obvious flat example is given by all parabolas whose focus is at the origin (`Kepler parabolas'; here   $\Sigma:=\R^2\setminus \{0\}$). It is doubly covered by straight lines via the (complex) quadratic map $z\mapsto z^2$.

\begin{figure}
    \centering
    \includegraphics[width=\textwidth]{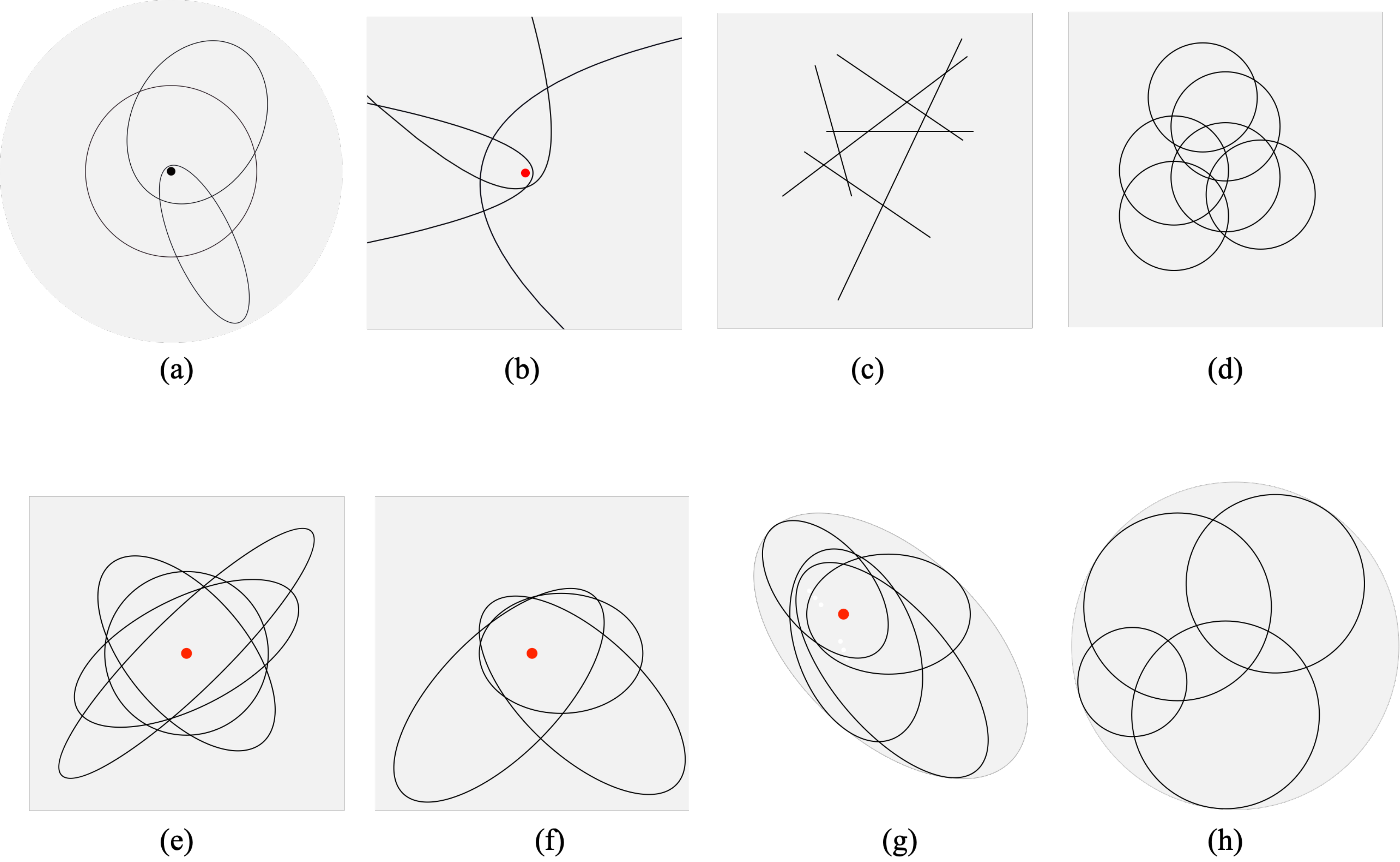}
    \caption{
    A gallery of 2D path geometries:
    (a)~Kepler ellipses of fixed major axis.
    (b)~Kepler parabolas.
    (c)~Straight lines.
    (d)~Circles of fixed radius.
    (e)~Hooke ellipses of fixed area.
    (f)~Kepler ellipses of fixed minor axis.
    (g)~Kepler ellipses tangent to a fixed Kepler ellipse.
    (h)~Circles tangent to a fixed circle (horocycles).
    Can you find the equivalent and dual geometries? \\
    \footnotesize(Answer: $ a=e=f, b=c=g$ (flat); $a^*=h, b^*=b, d^*=d.$)}

    \label{fig:gallery}
\end{figure}

Every path geometry is given locally by the graphs of solutions of a second-order ODE $y''=f(x,y,y')$. Conversely, a   path geometry determines the ODE up to so-called point transformations, that is, changes of coordinate $(x, y) \mapsto (\tilde x, \tilde y)$. The flat example of straight lines in $\RP^2$ corresponds to $y''=0$.     A path geometry is {\em projective} if its paths are the (unparametrized) geodesics of a torsion-free affine  connection on $\Sigma$. Such path geometries  correspond to  ODEs $y''=f(x,y,y')$ where $f$   is at most  cubic in $y'$. Note that, somewhat  surprisingly, this condition is independent of the coordinates  $x,y$  used on $\Sigma.$  Thus a `generic' path geometry is not projective, and in particular,  non-flat.  A non-projective example is  the  path geometry in $\R^2$ whose paths are all circles of a fixed radius.

A path geometry on a surface $\Sigma$ defines a {\em dual} path geometry on the  path space  $\Sigma^*$, whose paths are parametrized by $\Sigma$: for each point  $x\in\Sigma$ the corresponding   path in $\Sigma^*$ consists of all paths in  $\Sigma$ passing through $x$. Clearly, the dual of a flat path geometry is flat as well,   an example of a {\em self-dual} path geometry. The path geometry of circles of  fixed radius in $\R^2$ is an example of a self-dual non-projective path geometry. A projective path geometry is flat if and only if  its  dual is projective as well.

A flat path geometry admits an 8-dimensional (local) group of symmetries (the projective group $\PSL_3(\R)$). Conversely, a path geometry admitting an 8-dimensional local group of symmetries is necessarily flat (a theorem of Sophus Lie).  The {\em sub-maximal}  symmetry dimension, i.e. the maximum dimension of the local symmetry group of a non-flat path geometry,  is 3.  The path geometry of  circles with a fixed radius  is  sub-maximal.   Its  symmetry  group  is the Euclidean group. Another sub-maximal example  is given by central ellipses  (`Hooke ellipses') of fixed area. The  symmetry group $\SLt$, acting by its standard linear action on $\R^2$ (here $\Sigma=\R^2\setminus \{0\}$). In contrast to the previous example of circles with fixed radius, this example is projective and  non--self-dual: its dual  is the hyperbolic plane with the set of  horocycles   (in the Poincar\'e disk or upper half-plane model horocycles are precisely the circles tangent to the boundary).  The set of Kepler conics of fixed major or minor axis  (either hyperbolas or ellipses, with one of their foci at the origin) defines an interesting  path geometry which is locally equivalent to that of Kepler ellipses of fixed area,  see \cite{BJ1}.

  The subject was studied extensively  in the second half of the 19th century by Roger Liouville (a relative of the more famous Joseph Liouville), Sophus Lie and his student Arthur Tresse, who produced a local classification, over the complex numbers, of sub-maximal path geometries (i.e. those admitting a 3-dimensional group of symmetries) \cite{T}. This classification has been since refined over the real numbers \cite{D}. The only non-flat projective items on the list is the above mentioned  case of central ellipses of fixed area  (equivalently, Kepler ellipses  of fixed major or minor axis) and  central hyperbolas of fixed discriminant  (equivalently, Kepler hyperbolas of fixed minor axis; see Table 2 in the Appendix of  \cite{BJ}).

\subsection{An abstract reformulation of path geometry}\label{sect:abs_reform}
 We   describe here briefly    a more
  abstract and rigorous reformulation of  path geometries on surfaces, useful  also for  introducing chains. For further details  we recommend V. I. Arnol'd's book \cite[Chapter 1, Section 6]{arnold}.

Given a surface $\Sigma$, let  $\P T\Sigma$ be the ($3$-dimensional) total space of its projectivized tangent bundle. That is, a point in $\P T\Sigma$ corresponds to a point in $\Sigma$ together with a tangent line at the point
      (a 1-dimensional  linear subspace of the tangent space at the point). There is a standard contact distribution $D$ on $\P T\Sigma$, given by the  `skating' condition: ``the point moves along the line'', or ``the line rotates about the point.'' The fibers of the base point projection $\P T\Sigma\to \Sigma$ are integral curves of $D$. Their tangents form the vertical line field $L_1\subset D$.
       A path $ \gamma\subset \Sigma$  is lifted to $\P T\Sigma$ by mapping
        a  point on $\gamma$ to  the  tangent line to $\gamma$ at this point. The lifted curve is clearly an integral curve of $D$, as it satisfies the skating condition.
         The non-degeneracy assumption on a  path geometry on $\Sigma$
          is   that the {\em lifted paths form a smooth 1-dimensional foliation of   $\P T\Sigma$},  transverse to $L_1$ (in $D$); equivalently,  the tangent lines to the lifted curves form  a smooth
           line field $L_2\subset D$, complementary to $L_1$, so that  $D=L_1\oplus L_2$.

We thus arrive at an abstract  reformulation of a path geometry:

\begin{definition}\label{def:pg} A (2-dimensional) path geometry is  a smooth 3-manifold $M$ together  with an (ordered) pair of smooth line fields $L_1, L_2 \subset TM$, spanning a contact distribution $D=L_1\oplus L_2$. The path geometry dual to $(M,L_1,L_2)$ is  $(M, L_2, L_1)$.
\end{definition}

\begin{remark}
Another common name for  $(M,L_1, L_2)$ is a  {\em para-CR structure}, due to the formal similarity with a (Levi--non-degenerate) CR structure $(M,D,J)$. The latter is a contact distribution $D$ on a 3-manifold $M$ together with a complex structure $J \in \operatorname{End}(D)$, i.e. $J^2=-\operatorname{id}_D$; equivalently, it is a splitting  $D\otimes\C=D^{1,0}\oplus  D^{0,1}$, the direct sum of a conjugate pair of complex line bundles (the $\pm i$-eigenbundles of $J\otimes\C$).

In the real-analytic setting, CR and para-CR structures have a common complexification: a complex 3-manifold together with a pair of (complex) line fields spanning a (complex) contact distribution.
\end{remark}

\begin{remark}\label{remark:generalized-path-geometry}
Some authors define a path geometry as  a 2-parameter family of  curves on a surface $\Sigma$, a unique curve through any given point of $\Sigma$ in any given direction (see, e.g. the first paragraph of \cite{douglas},  or the ``fancy formulation" of Section 8.6 of \cite{IL}).  Definition \ref{def:pg} is  more precise and general: first, the surface $\Sigma$ is recovered from $(M, L_1, L_2)$ as the space of integral curves of $L_1$, which may exist as a smooth surface only locally.  Second, even if $\Sigma$ exists, the set of directions at a given   $x\in\Sigma$ for which
a curve exists may be only an open subset in $\P T_x\Sigma$. For example, for the path geometry of central ellipses in $\R^2\setminus \{0\}$ a curve exists only in non-radial directions.
Third, there may be more then one curve in a given direction. For example, for circles of fixed radius in $\R^2$,  there are {\em two}  circles passing through each point in a given direction. This can be  remedied by considering instead {\em oriented} circles of fixed radius and the {\em spherized} tangent bundle  $\S T \R^2$ ($T\R^2$, with the zero section removed, mod $\R^+$) instead of $\P T \R^2$. An analogous remedy applies to the aforementioned path geometry of horocycles in the hyperbolic plane.

We shall not dwell here further on  these details and refer the interested reader to Sections 4.2.3 and 4.4.3 of \cite{CS}, where our notion of a path geometry on a surface is called both a \textit{generalized path geometry} and a \textit{Lagrangean contact structure} on a $3$-manifold; the two notions differ in higher dimension.
\end{remark}

\subsection{Chains of path geometries via the Fefferman metric} In the CR case there is a well-known, naturally associated 4-parameter family of curves on $M$, called  {\em chains}, one chain for each given point in $M$ in a given  direction  transverse to the contact distribution. They are considered the CR analog of geodesics in Riemannian geometry
(see the recent article \cite{CMMM} for a variational formulation).  Chains were introduced  by \'E. Cartan while solving the equivalence problem of CR geometry \cite{C, J} and were studied  extensively by many authors, such as  Chern-Moser \cite{CM} and C. Fefferman \cite{F}, who showed that they arise from a natural construction, considerably simpler than Cartan's, nowadays called the {\em Fefferman  metric}: a conformal Lorentzian metric, i.e. of signature $(3,1)$, defined on the total space of a certain circle bundle over $M$. 
The chains of the CR structure are then the projections onto $M$ of the null geodesics of the Fefferman metric. 

Similarly, to each path geometry $(M,L_1, L_2)$ one can associate a natural 4-parameter family of curves on $M$, a unique curve through any  given point in $M$ in any given  direction  transverse to the contact distribution $D:=L_1\oplus L_2$.  The study of this natural class of curves is quite recent.  The earliest reference we know of is  a 2005 article of A. \v{C}ap and  V. \v{Z}\'adn\'ik \cite{CZ} (path geometries on surfaces appear there in Section 2 as $3$-dimensional \textit{Lagrangean contact structures}). See also Sections 5.3.7--8 and 5.3.13--14 of \cite{CS}. Both references define chains using  the associated Cartan geometry. However, as in CR case, there is a significant shortcut via the  Fefferman metric. This is a conformal metric of signature  $(2,2)$  on the total space of an $\R^*$-bundle  over $M$, and   the chains  are the projections onto $M$ of   non-vertical null geodesics of the  Fefferman metric.  In  this article we explain this construction and use it to give several concrete examples.

\begin{remark} As mentioned in Remark \ref{remark:generalized-path-geometry}, path geometries  on surfaces generalize in higher dimension to either (generalized) path geometries or   Lagrangean contact geometries. The Fefferman-type construction of  a conformal structure described here  generalizes in higher dimensions to Lagrangean contact structures but not to path geometries.
\end{remark}

\subsection{Contents of the article} In the next section we re-derive, as a warm-up and a reminder, the Fefferman   metric for a CR structure $(M,D,J)$. The construction appeared first in Fefferman's article \cite{F} for a CR manifold embedded as a real hypersurface in a complex manifold, followed by intrinsic  constructions, first direct ones in \cite{Fa, L}, then more advanced constructions that use the full solution of the equivalence problem for CR structures (Cartan bundle and connection), such as \cite{BDS,CZ,NS}.
%
We view instead a CR structure as a conformal class of sub-Riemannian geometries of contact type, solve the  equivalence problem of sub-Riemannian geometries of contact type following \cite{H}---which is much simpler than that for CR geometry, use a sub-Riemannian metric on $D$  to define a Lorentzian metric on $\S D$ (the spherization of $D$), then show that conformally equivalent sub-Riemannian metrics on $D$ induce conformally equivalent Lorentzian metrics on $\S D$. In retrospect, our construction can be regarded as a shorter version of \cite{Fa, L}, using  \cite{H}. It is still too complicated conceptually for our taste, and the below formula \eqref{eq:fef} for the metric appears a bit like magic, but this method is the best we have so far and is quite easy to work with.

Once the construction of Fefferman metric for CR geometry is understood, we construct in Section \ref{sect:pCR} in a similar fashion the Fefferman metric for a path geometry. As far as we know, our derivation is new, and before this article the only available construction of the Fefferman metric for path  geometry has been via the solution of the full equivalence problem for such a structure (see, e.g., \cite{CZ, NS}), which is considerably more involved than our derivation.

In Section \ref{sect:proj_chains} we prove the following  theorem, apparently new:
\begin{theorem}\label{thm:proj_chains}
 A path geometry on a surface $\Sigma$ is  projective if and only if the chains on $\P T\Sigma$ project to the paths in $\Sigma$.
 \end{theorem}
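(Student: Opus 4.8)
The plan is to work directly with the explicit Fefferman metric $g = g_{\mathrm{Fef}}$ constructed in Section~\ref{sect:pCR}, written in the coordinates adapted to a defining ODE. Locally a path geometry is the one of a second-order ODE $y'' = f(x,y,p)$, $p = y'$, on the open set of $\P T\Sigma$ with coordinates $(x,y,p)$; there $L_1 = \langle\partial_p\rangle$, $L_2 = \langle\partial_x + p\,\partial_y + f\,\partial_p\rangle$, $\theta = dy - p\,dx$, and (as recalled in the introduction) the geometry is projective exactly when $\partial_p^4 f\equiv 0$. So the theorem is equivalent to: every non-vertical null geodesic of $g$ on the $\R^*$-bundle $\mathcal{C}\to\P T\Sigma$ projects to a curve in $\Sigma$ solving the path ODE, if and only if $\partial_p^4 f\equiv 0$.

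First I would write \eqref{eq:fef} in coordinates $(x,y,p,\lambda)$ on $\mathcal{C}$, with $\lambda$ the fibre coordinate, and pass to the Hamiltonian picture: the relevant chains are the unparametrized bicharacteristics of $H=\tfrac12\,g^{-1}$ on $\{H=0\}\subset T^*\mathcal{C}$ that are transverse to the fibres. Two structural features should shorten the computation: (i)~fibre scaling acts conformally on $g_{\mathrm{Fef}}$, giving a first integral and letting $\lambda$ be eliminated; (ii)~non-verticality lets one use $x$ as parameter and fix the residual parametrization and conformal freedom. After these reductions the system descends to a curve $x\mapsto\big(y(x),p(x)\big)$ on an open set of $\P T\Sigma$, together with one auxiliary momentum. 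Since chains are transverse to $D$ one has $\dot y - p\,\dot x\neq 0$ along a chain, so the slope $q:=dy/dx$ of the projected curve in $\Sigma$ is genuinely different from the coordinate $p$, and the reduced equations express $dq/dx$ and $dp/dx$ in terms of $x,y,p,q$ and $f$.

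The crux is then to extract the equation satisfied by the projected curve $x\mapsto y(x)$ and compare it with the path equation $y''=f(x,y,y')$. Concretely I would compute $d^2y/dx^2 = dq/dx$ along a chain and show it equals $f(x,y,q)$ up to a correction that is a nonzero universal expression in the remaining variables times $\partial_p^4 f$ evaluated along the chain. The ``if'' direction is immediate once that correction vanishes for $\partial_p^4 f\equiv 0$; for ``only if'', since the correction carries a nonvanishing universal factor, its vanishing for all chains through all points forces $\partial_p^4 f\equiv 0$, hence projectivity. As a conceptual check on the outcome: when $\partial_p^4 f = 0$ the path geometry on $\P T\Sigma$ is the correspondence space of a projective structure on $\Sigma$, and its chains, being distinguished curves, must project to distinguished curves of that projective structure --- the computation pins them down as the most degenerate ones, the unparametrized geodesics, i.e. the paths.

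The step I expect to be the main obstacle is the bookkeeping of the reduction and comparison. The geodesic equations of a signature-$(2,2)$ metric in four variables are bulky; the conformal rescaling freedom must be handled with care, since only null geodesics (as unparametrized curves) are conformally invariant; and one must avoid coordinate choices that obscure the role of the invariant $\partial_p^4 f$. Choosing a coframe for $g_{\mathrm{Fef}}$ adapted to $L_1$, $L_2$ and the fibre, and performing the reductions (i)--(ii) in the right order, is what should make the final comparison short enough to exhibit the correction term --- and hence the obstruction to projectivity --- cleanly.
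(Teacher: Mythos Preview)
Your overall strategy is the same as the paper's: pass to local ODE coordinates $(x,y,p)$, compute the chain equations with $x$ as parameter (the paper does this via the Lagrangian geodesic equations rather than the Hamiltonian, but the result is identical), and compare the resulting $y''$ equation with the path equation $y''=f(x,y,y')$. So the route is correct.

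One point of your anticipated outcome is slightly off, and correcting it is exactly what makes the paper's argument short. You expect the difference $y''-f(x,y,y')$ along a chain to be a universal nonvanishing factor times $\partial_p^4 f$ evaluated along the chain. That is not what comes out. The explicit chain equation for $y''$ (the paper's Proposition~\ref{prop:yp}) is
\[
y''=f+f_p\Delta+\tfrac12 f_{pp}\Delta^2+\tfrac16 f_{ppp}\Delta^3,\qquad \Delta=y'-p,
\]
with $f$ and its $p$-derivatives evaluated at $(x,y,p)$. In other words, the chain equation for $y''$ is \emph{precisely the third-order Taylor polynomial of $f(x,y,\cdot)$ about $p$, evaluated at $y'$}. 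Hence $y''-f(x,y,y')$ is the full Taylor remainder, not a pointwise multiple of $\partial_p^4 f$; it vanishes identically if and only if $f$ is cubic in $p$, i.e.\ $\partial_p^4 f\equiv 0$. With this observation both implications are one line each, and no separate ``nonvanishing universal factor'' argument is needed for the converse: if $f$ is not cubic in $p$, equation~\eqref{eq:tay} fails on an open set with $y'\neq p$, and chains there (which exist through every point in every direction transverse to $D$) cannot project to paths. Your Hamiltonian reduction would of course produce the same $y''$ equation; the only adjustment is to expect the Taylor-polynomial structure rather than a single $\partial_p^4 f$ obstruction.
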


In the last section we study in some detail the chains of  four  homogeneous path geometries mentioned above:  straight lines, circles of fixed radius,  central ellipses of fixed area and horocycles in the hyperbolic plane.

\paragraph{Acknowledgments.} GB acknowledges support from CONACYT Grant A1-S-4588. TW is grateful for support and hospitality from CIMAT during an extended visit in the 2019--20 academic year and for support from Guilford College.

\section{The Fefferman metric for CR 3-manifolds (revisited)}
Let $(M,D,J)$ be a CR 3-manifold, i.e.  $D\subset TM$ is a contact 2-distribution (that is, $[D,D]=TM$) and $J\in \End(D)$ satisfies $J^2=-\operatorname{id}_D$.  Canonically associated to the CR structure is the circle bundle $ \S D\to M$ ($D$ with the zero section removes, mod $\R^+$) and a   conformal class of metrics of signature $(3,1)$ on  $\S D$,   the {\em Fefferman  metric}. It depends on the {\em second-order jet} of the CR structure, so is not so easy to see. The fibers of $\S D\to M$ are null geodesics, and the projections of the non-vertical null geodesics to $M$ are the {\em chains} of the CR structure, forming a 4 parameter family of curves on $M$.




\paragraph{The construction.} Fix a positive contact form $\eta^3$ on $M$, i.e. a 1-form satisfying
\begin{align}
 & D=\Ker(\eta^3) , \label{eq:contact}\\
 & \d\eta^3(X,JX)> 0\mbox { for every }X\in D,\ X\neq 0.\label{eq:positive}
\end{align}

\begin{remark}A general contact manifold does not admit necessarily a global contact form  (a 1-form whose kernel is $D$) but the contact structure of a CR manifold does, using the orientation of   $D$ induced by $J$. If $M$ is connected then any global  contact form is either positive or negative.
\end{remark}
Recall that the coframe bundle $\pi: F^*\to M$ is the principal $\GL_3(\R)$-bundle whose fiber at a
point $x\in M$ consists of all linear isomorphisms $u:T_xM\to \R^3$. 
The {\em tautological 1-form} on
$F^*$ is the  $\R^3$-valued 1-form  $\omega
$ whose value at
$u\in F^*$ is $u\circ(\d\pi)_u.$

Now a positive contact form $\eta^3$ on $M$ defines a positive-definite inner product on
$D$, $\<X, Y\>:=\d\eta^3(X,JY) $. An  {\em adapted coframe} is an extension of $\eta^3$ to a coframe $\eta=(\eta^1, \eta^2, \eta^3)^t$  (we view elements of $\R^3$ as {\em column} vectors),   satisfying
\begin{align}
 &\   \d\eta^3
        = \eta^1\wedge\eta^2\\
&   \<\cdot,\cdot\>= \left.\left[(\eta^1)^2+(\eta^2)^2\right]\right\vert_D.\label{eq:red}
 \end{align}

It is easy to show that for a fixed $\eta^3$ these 2 equations define a circle's worth of coframes at each $x\in M$. Thus,   let $S^1\subset\GL_3(\R)$ be the set of matrices of the form
%
 $$\left(\begin{array}{ccc}
\cos\varphi&\sin\varphi&0\\
\sin\varphi&\cos\varphi&0\\
0&0&1
\end{array}
\right),
$$
and $B\subset F^*$ the set of coframes adapted to $\eta^3$. Then
$B\to M$ is a principal  $S^1$-subbundle, an $S^1$-reduction of $F^*$,  whose local sections consist of adapted coframes.

We continue to denote by  $\omega=(\omega^1,\omega^2,\omega^3)^t$ the restriction of the tautological 1-form on $F^*$  to $B$. Then there are  unique 1-form $\alpha$ and functions $a_1, a_2$ on $B$  such that
\begin{align}\label{eq:str1}
\d
\left(
\begin{array}{c}
\omega^1\\
\omega^2\\
\omega^3
\end{array}
\right)
=-
\left(
\begin{array}{ccc}
0&\alpha&0\\
-\alpha&0&0\\
0&0&0
\end{array}
\right)
\wedge
\left(
\begin{array}{c}
\omega^1\\ \omega^2\\ \omega^3
\end{array}
\right)
+
\left(
\begin{array}{ccc}
a_1&a_2&0\\
a_2&-a_1&0\\
0&0&1
\end{array}
\right)
\left(
\begin{array}{c}
\omega^2\wedge \omega^3\\
\omega^3\wedge \omega^1\\
\omega^1\wedge \omega^2
\end{array}
\right).
\end{align}
(See equation  (1) of \cite{H}).
Furthermore, there are unique functions $b_1, b_2, K$ on $B$ such that
 \be\label{eq:str2}
 \d\alpha=b_1\omega^2\wedge \omega^3+b_2\omega^3\wedge \omega^1+K\omega^1\wedge\omega^2.
\ee
(See equation (4) of \cite{H}; in fact, $K$ descends to $M$. Also, $\alpha$ is essentially the Webster connection form \cite{W}, $a_1, a_2$ its torsion, and $K$ the Webster scalar curvature).

Define a  Lorentzian metric on $B$  by
\be \label{eq:met}
    \grm := \omega^1\cdot\omega^1 +\omega^2\cdot\omega^2+\omega^3\cdot \sigma,
\ee
where $\sigma$ is a $1$-form, to be determined later, and $\cdot$ is the symmetric product of 1-forms.

Let $\S D$ be the `spherization' (or `ray projectivization') of $D$, the quotient of $D$, with the zero section removed, by the dilation action of $\R^+$. There is an obvious $S^1$-action on $D$, commuting with the $\R^+$ action, thus making  $\S D$ a principal $S^1$-bundle. Note that $\S D$, unlike $B$, is canonically associated to $(M,D,J)$: to define $B$ we needed to choose the positive contact form $\eta^3$. Define an isomorphism of principal $S^1$-bundles
\be\label{eq:iso}
    h:B\to \S D, \quad u \mapsto [u^{-1}\e_1],
\ee
where $\e_1=(1,0,0)^t.$ That is, $h(u)=[X]\in \S D,$ where $X\in D$ is  the unique vector in  $T_xM$, $x=\pi(u)$,  satisfying $u^1(X)=1, u^2(X)=u^3(X)=0$. 
We then use  $h$ to map  the  Lorentzian metric  on $B$ of equation \eqref{eq:met} to   a Lorentzian metric   on  $\S D$. In general, for arbitrary $\sigma$ in formula \eqref{eq:met}, the  resulting  metric  on  $\S D$ depends  on the choice of $\eta^3$ in a complicated way, but for a careful choice of  $\sigma$  {\em  the   conformal class of the Lorentzian  metric on $\S D$  is  independent of the choice of $\eta^3$}.

\begin{remark} There are other models for the underlying space of the  Fefferman metric instead of
$\S D$ (a matter of taste). For example, one can take the spherization of the dual bundle $D^\vee$, in which case the formula for the identification $B\to \S D^\vee$ is a little simpler: $u\mapsto [u^1|_D].$ Another model is the spherization of the canonical bundle $\Lambda^{2,0}D\subset \Lambda^2T^*D\otimes\C$, as in \cite{L}; the identification with $B$  in this case is  $u\mapsto [u^3\wedge(u^1+iu^2)|_D].$ Also, the metric on $\S D$ is invariant under the antipodal map in each fiber (a circle), and so it descends to the (full) projectivization $\P D$.
\end{remark}
\begin{theorem} \label{thm:main}Let $(M,D,J)$ be a CR 3-manifold, $\S D\to M$ the spherization of $D$ and $\eta^3$ any positive contact $1$-form, as in equations \eqref{eq:contact} and \eqref{eq:positive}. Define a $1$-form $\sigma$ on the total space of the associated  circle bundle $B\to M$,
\be\label{eq:fef}\sigma={4\over 3}\alpha-{1\over 3}K\omega^3,
\ee
where $\alpha, K$ are defined via equations \eqref{eq:str1} and \eqref{eq:str2}. Then  the conformal class of the Lorentzian metric  induced on  $\S D$ by equation \eqref{eq:met}, via the isomorphism \eqref{eq:iso},  is independent of the choice of $\eta^3$. In fact, multiplying $\eta^3$ by a positive function  rescales  the induced metric on $\S D$ by the same factor.
\end{theorem}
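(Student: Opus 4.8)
The plan is to prove the stronger second assertion (multiplying $\eta^3$ by a positive function multiplies the metric on $\S D$ by the same factor), which clearly implies the first. Since any two positive contact $1$-forms with kernel $D$ differ by multiplication by a positive function on $M$, I would fix such a form $\eta^3$ together with a competitor $\hat\eta^3=\lambda\,\eta^3$, $\lambda\in C^\infty(M)$, $\lambda>0$, and denote by $B\to M$, $h\colon B\to\S D$ the objects of Theorem~\ref{thm:main} attached to $\eta^3$ and by $\hat B\to M$, $\hat h\colon\hat B\to\S D$ those attached to $\hat\eta^3$. Putting $\psi:=h^{-1}\circ\hat h\colon\hat B\to B$ (an isomorphism of principal $S^1$-bundles over $M$) and writing $\grm,\hat\grm$ for the metrics \eqref{eq:met} with $\sigma$ as in \eqref{eq:fef}, the goal reduces to the single identity $\hat\grm=\lambda\,\psi^*\grm$: pushing forward along $\hat h=h\circ\psi$ then gives exactly ``metric from $\hat\eta^3$ $=\lambda\cdot$ metric from $\eta^3$'' on $\S D$.

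The first concrete step is to make $\psi$ explicit. Because $\eta^3$ vanishes on $D$ one has $\d\hat\eta^3|_D=\lambda\,\d\eta^3|_D$, so the inner product on $D$ rescales by $\lambda$; hence, relative to a local adapted coframe $(\eta^1,\eta^2,\eta^3)$ for $\eta^3$, every local adapted coframe for $\hat\eta^3$ has the form $\hat\eta^i=\sqrt{\lambda}\,R^i_j\eta^j+c^i\eta^3$ $(i=1,2)$, $\hat\eta^3=\lambda\eta^3$, with $R\in\SO(2)$ and $(c^1,c^2)$ determined by $\d\lambda$ through the normalization $\d\hat\eta^3=\hat\eta^1\wedge\hat\eta^2$. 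Comparing the defining rule for $h$ and for $\hat h$ (each sends a coframe to the ray spanned by its first dual vector) shows that in compatible local trivializations $\psi$ is the identity on the $S^1$-fibre coordinate, so that, identifying $\hat B$ with $B$ via $\psi$, the two tautological forms are related by
\[
\hat\omega^1=\sqrt{\lambda}\,\omega^1+p^1\,\omega^3,\qquad
\hat\omega^2=\sqrt{\lambda}\,\omega^2+p^2\,\omega^3,\qquad
\hat\omega^3=\lambda\,\omega^3,
\]
where $p^1,p^2$ are explicit functions on $B$ (up to sign, $\sqrt\lambda$ times the two ``contact derivatives'' of $\log\lambda$, with $\theta$-dependent coefficients).

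The hard part will be the next step: reading off the transformation of $\alpha$ and $K$. I would differentiate the three relations above and substitute the result into the structure equations \eqref{eq:str1}, \eqref{eq:str2} written for the hatted coframe, then match term by term against \eqref{eq:str1}--\eqref{eq:str2} for the unhatted coframe; by the uniqueness clauses in those equations this pins down $\hat\alpha$ and $\hat K$. One obtains $\hat\alpha=\alpha+(\text{a }1\text{-form linear in }\d\log\lambda\text{ and the }p^i)$ and $\hat K=\lambda^{-1}\bigl(K+(\text{a second-order operator applied to }\log\lambda)\bigr)$ --- these are the Tanaka--Webster connection and scalar-curvature transformation laws, here produced directly from the structure equations of \cite{H} rather than quoted.

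Finally I would substitute everything into $\hat\grm=(\hat\omega^1)^2+(\hat\omega^2)^2+\hat\omega^3\cdot\hat\sigma$, $\hat\sigma:=\tfrac43\hat\alpha-\tfrac13\hat K\hat\omega^3$, and expand. The piece $(\hat\omega^1)^2+(\hat\omega^2)^2$ produces $\lambda\bigl[(\omega^1)^2+(\omega^2)^2\bigr]$ together with a cross term $\omega^3\cdot(\cdots)$ and an $(\omega^3)^2$ term, both coming from the $p^i$, while $\hat\omega^3\cdot\hat\sigma$ produces $\tfrac43\lambda\,\omega^3\cdot\hat\alpha$ and a further $(\omega^3)^2$ term. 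The assertion to verify is that the cross term is cancelled by $\tfrac43\lambda\,\omega^3\cdot(\hat\alpha-\alpha)$ and that the two $(\omega^3)^2$ contributions combine with $-\tfrac13\lambda K(\omega^3)^2$ to produce nothing extra, leaving precisely $\lambda\grm$; this is exactly the computation that forces the constants $\tfrac43$ and $\tfrac13$ in \eqref{eq:fef}. Since both metrics are $S^1$-invariant (the relevant $\alpha$ being a principal connection form for an abelian structure group), this last identity may be checked after pulling back along a local section of $B\to M$, which trims the algebra considerably. As a sanity check, for constant $\lambda$ all the $p^i$ and $\d\log\lambda$ terms vanish and $\hat\grm=\lambda\grm$ holds for any choice of the two constants, so the entire content --- and the main obstacle --- lies in the derivative terms, i.e.\ in getting the $\alpha$- and $K$-transformation laws and their interaction exactly right.
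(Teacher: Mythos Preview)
Your plan is sound and, carried out carefully, would yield a correct proof; the overall architecture (identify the two bundles, compare the tautological forms, extract the transformation laws of $\alpha$ and $K$, and verify $\hat\grm=\lambda\,\psi^*\grm$) is exactly that of the paper. The genuine difference is that you propose a \emph{finite} computation --- working out the full transformation laws $\hat\alpha=\alpha+\cdots$ and $\hat K=\lambda^{-1}(K+\cdots)$ and substituting --- whereas the paper linearizes: it takes a one-parameter family $\lambda(t)$ with $\lambda(0)=1$, differentiates at $t=0$, and checks only the infinitesimal identity $\dot\grm=2\dot\lambda\,\grm$ (equivalently $\dot\sigma=4(\dot\lambda_2\omega^1-\dot\lambda_1\omega^2)$). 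This cuts the algebra substantially: the nonlinear terms in the transformation of $\alpha$ (those quadratic in $\lambda_i/\lambda$) and the full second-order expression for $\hat K$ never appear, and one needs only $\dot\alpha$ and $\dot K$, which are short. The infinitesimal statement then implies the finite one by the usual argument (apply it at every point of a path of contact forms). Your route buys directness and avoids that last step; the paper's route buys a much lighter computation. Either way, your observation that the constants $4/3$ and $-1/3$ are exactly what is needed for the cross terms and the $(\omega^3)^2$ terms to cancel is the heart of the matter in both approaches.
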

\begin{proof}
If $\eta^3$ is a positive contact form on $M$, then any other positive contact form is of the form $\tilde\eta^3=\lambda^2\eta^3$, for some positive function $\lambda:M\to \R^+$. Changing $\eta^3$ to $\tilde\eta^3$ changes $B$ to $\tilde B$, another $S^1$-reduction of the coframe bundle of $M$, with corresponding metric $\tilde \grm$ and isomorphism $\tilde h:\tilde B\to S$. We thus need to show that the composition  $f:=\tilde h^{-1}\circ h : B\to \tilde B$ satisfies $f^*(\tilde g)=\lambda^2g.$

Let us pull-back  $\lambda$   to $B$ by the projection $B\to M$, denoting the result  by $\lambda$ as well. Then
\be\label{eq:lambda}
\d\lambda=\lambda_i\omega^i, \quad \d\lambda_i=\lambda_{i0}\alpha+\lambda_{ij}\omega^j,
\ee
 for some functions $\lambda_i, \lambda_{ij}, \lambda_{i0}$ on $B$, $1\leq i, j\leq 3$.  (Note that by definition $\lambda$ descends to $M$; in general the $\lambda_i$ do not, but $\lambda_3$ does.)
\begin{lemma}
    $$\lambda_{10}=-\lambda_2, \qquad \lambda_{20}=\lambda_1, \qquad \lambda_{12}-\lambda_{21}=\lambda_3 .$$
\end{lemma}
\begin{proof}
These identities follow immediately from expanding $\d(\d\lambda)=0.$
\end{proof}

 Now a section  $\eta=(\eta^1, \eta^2, \eta^3):M\to B$ of $B\to M$ is a coframe adapted to $\eta^3$,   so $f\circ\eta:M\to \tilde B$ is a section of $\tilde  B\to M$, a coframe adapted to $\tilde\eta^3=\lambda^2\eta^3.$

\begin{lemma}\label{lemma:cf}   $f\circ \etab=\Lambda\etab$,  where
$$\Lambda= \left(\begin{array}{ccr}
\lambda&0&-2\lambda_2\\
0&\lambda&2\lambda_1\\
0&0&\lambda^2
\end{array}
\right).
 $$
\end{lemma}
\begin{proof}
 It is enough to check  that $\tilde\etab:=\Lambda\etab$  satisfies equations \eqref{eq:contact}-\eqref{eq:red} above, with $\tilde\eta^3=\lambda^2\eta^3$ instead of $\eta^3$, as well  as $\tilde\eta^1(\tilde X)=1, \tilde\eta^2(\tilde X)=\tilde\eta^3(\tilde X)=0$  for $\tilde X=X/\lambda.$
\end{proof}

\begin{lemma}\label{lemma:pb} $f^*\omega_{\tilde B}= \Lambda\omega_B.$
\end{lemma}

 \begin{proof}Let $\eta\in B$, $\tilde\etab=f(\etab)$. By Lemma \ref{lemma:cf}, $\tilde\eta=\Lambda\etab,$  hence
 $(f^*\omega_{\tilde B})_\eta
 =(\omega_{\tilde B})_{\tilde\eta}\circ(\d f)_\etab
=\tilde\etab\circ(\d\tilde\pi)_{\tilde\etab}\circ(\d f)_\etab
=\tilde\etab\circ\d(\tilde\pi\circ f)_\etab=\tilde\etab\circ(\d\pi)_\etab
=\Lambda\etab\circ\d\pi_\etab
=\Lambda\omega_\etab.$
\end{proof}

\n{\bf Notation.} For sake of readability, we adopt henceforth the following abbreviated notation:
\[
\omega:= \omega_{B}, \  \grm:= \grm_B, \  \ldots, \  \tilde \omega:= f^*\omega_{\tilde B}, \ \tilde \grm:= f^*\grm_{\tilde B}, \  \ldots, \  \textrm{etc.}
\]
Thus, for example,  Lemma \ref{lemma:pb} reads $\tilde\omega=\Lambda\omega$.

\mn

We proceed with the proof of Theorem \ref{thm:main}.
It is clearly  enough to show an infinitesimal version of the claimed conformal invariance. Suppose $\lambda=\lambda(t)$ is differentiable and that it satisfies $\lambda(0)=1$. 
Denote by a dot the derivative with respect to $t$ at $t=0$ of objects on $\tilde B$ pulled back to  $B$ by $f$, e.g., $\dot \lambda=\lambda'(0), \ \dot\lambda_i=\lambda_i'(0),\ \dot\lambda_{ij}=\lambda_{ij}'(0)$, $\dot \grm=\left.{\d\over\d t}\right| _{t=0}\tilde \grm,$ etc. Then $\tilde \grm=\lambda^2 \grm$  if and only if $\dot \grm=2\dot\lambda\grm$ (for all  $\eta^3$ and $\lambda(t)$ satisfying $\lambda(0)=1$). Now calculate using the previous lemmas:
\begin{align*}
&\dot\omega^1= \dot\lambda\omega^1-2\dot\lambda_1\omega^3,
\  \   \dot\omega^2=\dot\lambda\omega^2+2\dot\lambda_2\omega^3, \ \
\dot\omega^3=2\dot\lambda\omega^3, \\
&\dot \grm=2\dot\lambda \grm+(\dot \sigma-4\dot\lambda_2\omega^1+4\dot\lambda_1\omega^2)\cdot \omega^3.
\end{align*}
Thus  $\dot\grm=2\dot\lambda\grm$ if and only if
\be\label{eq:conf}
\dot \sigma=4(\dot\lambda_2\omega^1-\dot\lambda_1\omega^2).
\ee
To calculate $\dot \sigma$, using formula \eqref{eq:fef}, we need formulas for $\dot\alpha$ and $\dot K.$ To find $\dot\alpha$ we find first a formula for $\tilde\alpha.$ Write the structure equations \eqref{eq:str1} for $\tilde\omega$, substitute $\tilde\omega=\Lambda\omega$, and equate coefficients. The result is $$ \tilde\alpha=\alpha+3{\lambda_2\over \lambda}\omega^1-3{\lambda_1\over \lambda}\omega^2-
\left[3{(\lambda_1)^2+(\lambda_2)^2
\over \lambda^2}+{\lambda_{11}+\lambda_{22}\over\lambda}\right]\omega^3.
$$
Taking derivative with respect to  $t$ at $t=0$ of the last formula, we get
$$\dot\alpha=3\dot\lambda_2\omega^1-3\dot\lambda_1\omega^2-(\dot\lambda_{11}+\dot\lambda_{22})\omega^3.$$
To find $\dot K$, there is a shortcut, avoiding an explicit formula for $\tilde K$, by noting first  that $K$ is defined by $\d\alpha\equiv K\omega^1\wedge\omega^2$ (mod $\alpha, \omega^3$). Taking $\d$ of the above formula for $\dot \alpha$, we  get,  using equations \eqref{eq:lambda},
 $\d\dot\alpha\equiv -4(\dot\lambda_{11}+\dot\lambda_{22})\omega^1\wedge\omega^2$ (mod $\alpha, \omega^3$). Taking derivative with respect to  $t$ of $\d\tilde\alpha\equiv\tilde K\tilde\omega^1\wedge\tilde\omega^2$ (mod $\tilde\alpha, \tilde\omega^3$), we get $\d\dot\alpha
\equiv  (\dot K+2\dot\lambda K) \omega^1\wedge\omega^2$ (mod $\alpha, \omega^3$),
hence
$$\dot K=-2\dot\lambda K-4(\dot\lambda_{11}+\dot\lambda_{22}).$$

Now let  $\tilde\sigma= c_1\tilde\alpha+c_2\tilde K\tilde\omega^3$, with some constants $c_1, c_2$. Then
\[
    \dot\sigma
        = c_1\dot\alpha+c_2(\dot K\omega^3+K\dot\omega^3)=3c_1(\dot\lambda_2\omega^1-\dot\lambda_1\omega^2)-(c_1+4c_2)(\dot\lambda_{11}+\dot\lambda_{22})\omega^3.
\]
Thus  equation  \eqref{eq:conf} is satisfied  if  $c_1=4/3, c_2=-1/3$.
\end{proof}

\subsection{ Example: left-invariant  CR structures  on $\SU_2$} The left-invariant $\su_2$-valued Maurer--Cartan form  on $\SUt$ is
\be\label{eqn:mc}\Theta=g^{-1}\d g= \left(\begin{array}{cc}
i\theta^1 &\theta^2+i\theta^3\\
-\theta^2+i\theta^3 & -i\theta^1
\end{array}\right).
\ee
 The Maurer--Cartan equation $\d\Theta=-\Theta\wedge\Theta$ gives
\be\label{eq:mc1}
 \d\theta^1=-2\theta^2\wedge\theta^3 , \qquad
 \d\theta^2=-2\theta^3\wedge\theta^1 , \qquad
 \d\theta^3=-2\theta^1\wedge\theta^2 .
\ee
For each $t\in[1,\infty)$ let
$$
    \eta^1 =  \sqrt{t} \,\theta^1           , \qquad
    \eta^2 =             \theta^2 /\sqrt{t} , \qquad
    \eta^3 = -           \theta^3 /2        .
$$
One can show that every left-invariant CR structure $D^{0,1}\subset T\SUt\otimes\C$ is equivalent (via right translation), for a unique $t\in[1,\infty)$,  to $\{\eta^1+i\eta^2,\eta^3\}^\perp.$ For $t=1$ we obtain the standard `spherical' CR structure on $\SU_2\simeq S^3$, for $t>1$ these are non-spherical CR structures, distinct $t$ determine inequivalent structures (see \cite{BJ}, Prop. 5.1).
 We use \eqref{eq:mc1} to find
$$
    \d\eta^1 = 4t \,\eta^2\wedge\eta^3 , \qquad
    \d\eta^2 = (4/t)\eta^3\wedge\eta^1 , \qquad
    \d\eta^3 =      \eta^1\wedge\eta^2 .
$$
Using this coframe we  identify  $B\simeq \SUt\times S^1$ and  $\omega=\bar u\cdot\eta$, where $u=e^{i\varphi}.$ Explicitly,
$$\omega^1=\sqrt{t}(\cos\theta)\theta^1+
{1\over\sqrt{t}}(\sin\theta)\theta^2,\ \omega^2=-\sqrt{t}(\sin\theta)\theta^1+{1\over\sqrt{t}}(\cos\theta)\theta^2,\ \omega^3=-{1\over 2}\theta^3.$$
Inserting these into equations \eqref{eq:str1}--\eqref{eq:str2}, we obtain
$$\alpha=\theta^4-\left(t+{1\over t}\right)\theta^3, \quad K=2\left(t+{1\over t}\right),$$
where $\theta^4:=\d\varphi$.
Inserting all this  into  equations \eqref{eq:met}--\eqref{eq:fef}, we get
$$
\grm=t\left(\theta^1\right)^2+{1\over t}\left(\theta^2\right)^2+{1\over 2}\left(t+{1\over t}\right)\left(\theta^3\right)^2-{2\over 3}\theta^3\cdot\theta^4.
$$
This is essentially formula (15) of \cite{CM}; the coefficient of our $\theta^3 \cdot \theta^4$ term can be made to agree with the cited formula by rescaling the $\varphi$ coordinate by a constant. See also \cite{CM} for a study of the chains of this example via null geodesics of  the Fefferman metric.


\section{The Fefferman metric for path geometries}\label{sect:pCR}

Let $(M,L_1, L_2)$  be a path geometry, i,e, $L_1, L_2$ is a pair of   line fields on a 3-manifold $M$,  spanning a contact distribution  $D:=L_1\oplus L_2$. Let us fix a contact form $\eta^3$, that is, $D=\Ker(\eta^3)$ (possibly defined only locally, see Remark \ref{remark:contact} below). An {\em adapted coframe}  (with respect to $\eta^3$) is an extension of $\eta^3$ to a (local) coframe $(\eta^1, \eta^2, \eta^3)$  satisfying
\begin{align}
&\d\eta^3=\eta^1\wedge\eta^2,\label{eq:adapted1}\\
&\eta^1|_{L_2}=\eta^2|_{L_1}=0. \label{eq:adapted2}
 \end{align}
These equations  define an $\R^*$-structure, i.e. an $\R^*$-principal subbundle $B\subset F^*$,  whose local sections are the coframes adapted to $\eta^3$, where $s\in \R^*$ acts by $(\eta^1, \eta^2, \eta^3)\mapsto( \eta^1/s, s\eta^2, \eta^3).$

Let $D^*=D\setminus (L_1\cup L_2)\subset D$,  with  spherization $\S D^*\subset \S D$. The Fefferman metric associated to the path geometry is a conformal pseudo-Riemannian metric of signature $(2,2)$ on $\S D^*$. We shall define it in a manner  similar to the  CR case. The splitting $D=L_1\oplus L_2$ defines an involution $J\in\End(D), $ $J^2=\id,$ by
\be\label{eq:J}
J(X_1+X_2)=X_1-X_2,\quad  X_i\in L_i, \ i=1,2.
\ee The contact form $\eta^3$ defines on $D$ an area form, $\left.\d\eta^3\right|_D$, and an indefinite metric of signature $(1,1)$, $\<X,Y\>:=\d\eta^3(X,JY).$ Now $D^*=D^+\cup D^-$, where $D^\pm$ are the positive (resp. negative) vectors with respect to $\<\cdot, \cdot\>$, and corresponding decomposition $\S D^*=\S D^+\cup \S D^-$. Both $\S D^\pm$ are $\R^*$-principal bundles over $M$, where $s\in\R^*$ acts by $[X_1+X_2]\mapsto [sX_1+X_2/s],$ $X_i\in L_i$.
Note that $D^\pm$ are interchanged by $J$ or by taking $-\eta^3$ instead of $\eta^3$.
There is an identification  of $\R^*$-principal bundles,
\be\label{eq:hiso}
h:B\to \S D^+, \ u\mapsto [X],\ \mbox{where}\ u^1(X)=u^2(X)=1, u^3(X)=0.
\ee

We shall  define a pseudo-Riemannian metric of signature $(2,2)$ on $B$,  map it by $h$ to $\S D^+$, then by $J$  to $\S D^-$. As in the CR case, we show that the associated conformal class of metrics on $\S D^*$ is independent of the chosen contact form $\eta^3$.

\begin{remark}\label{remark:contact} A general contact 3-manifold is naturally oriented. (Proof: choose a local contact form $\eta^3$, then $\eta^3\wedge\d\eta^3$ is a volume form; multiplying $\eta^3$ by a non-vanishing $\lambda$ multiplies this volume form  by $\lambda^2$, so does not change the associated orientation.) The Lie bracket of sections of $D$ defines an isomorphism $\Lambda^2(D)\to TM/D$, but these isomorphic  line bundles need not be trivial, i.e. there might not exist on $M$ a global contact form (a non-vanishing section of $D^\perp\simeq(TM/D)^*).$  In the CR case, $J$ defines an orientation on $D\subset TM$, hence of $TM/D$ (since $TM$ is oriented), and a dual orientation of $D^\perp=(TM/D)^*$, so there is always a global  contact form. This is not the case for a path geometry  (e.g., $M=\P T\R^2$, equipped with the standard flat path geometry).  But this topological difficulty is minor, we can still define $B$ locally, then show  that the conformal structures  defined on $\S D^*$ restricted to open subsets of $M$ agree  on intersections. We shall not dwell on the details.
\end{remark}

We shall now proceed with the plan outlined above, in the paragraph before Remark \ref{remark:contact}.

The structure equations for any $\R^*$-connection form $\alpha$ on   $B\to M$ are
\begin{align*}
\d
\left(
\begin{array}{c}
\omega^1\\
\omega^2\\
\omega^3
\end{array}
\right)
=-
\left(
\begin{array}{ccc}
\alpha&0&0\\
0&-\alpha&0\\
0&0&0
\end{array}
\right)
\wedge
\left(
\begin{array}{c}
\omega^1\\ \omega^2\\ \omega^3
\end{array}
\right)
+
\left(
\begin{array}{ccc}
T^1_{23}&T^1_{31}&T^1_{12}\\
T^2_{23}&T^2_{31}&T^2_{12}\\
0&0&1
\end{array}
\right)
\left(
\begin{array}{c}
\omega^2\wedge \omega^3\\
\omega^3\wedge \omega^1\\
\omega^1\wedge \omega^2
\end{array}
\right),
\end{align*}
where $T^i_{jk}$ are some real functions on $B$ (the  coefficients of the torsion tensor of the connection). Starting from any such connection it  is easy to show that it can be modified, in a unique way,  by adding to $\alpha$ multiples of the $\omega^i$, so as to render  $T^1_{31}=T^1_{12}=T^2_{12}=0$ (in fact doing so also solves the equivalence problem for path geometry equipped with a fixed contact form).  Taking the exterior derivative of  $\d\omega^3=\omega^1\wedge\omega^2$ shows that $T^2_{23}=0$ as well. The structure equations now become
\begin{align}\label{eq:str3}
\begin{split}
\d\omega^1&=-\alpha\wedge\omega^1+ a_1\omega^2\wedge \omega^3\\
\d\omega^2&=\phantom{-}\alpha\wedge\omega^2+ a_2\omega^3\wedge \omega^1\\
\d\omega^3&=\phantom{-}\omega^1\wedge \omega^2
\end{split}
\end{align}
for some functions $a_1, a_2$  on $B$. Taking exterior derivative of these equations we get
\begin{align}\label{eq:str4}\d\alpha=b_1\omega^2\wedge \omega^3+b_2\omega^3\wedge \omega^1 +K \omega^1\wedge \omega^2
\end{align}
for some functions $b_1, b_2, K$ on $B$ (i.e. $\d\alpha$ is semi-basic, containing  no $\alpha\wedge\omega^i $ terms.)

\begin{theorem} Let $(M,L_1, L_2)$ be a path geometry 
and $\S D^*\subset \S D$ the set of rays in $D=L_1\oplus L_2$ not contained in $L_1\cup L_2$. Then there is a canonically associated conformal class of metrics of signature $(2,2)$ on $\S D^*$, called the Fefferman metric, defined as follows. Associated with each   contact 1-form  $\eta^3$  on $M$ is an $\R^*$-reduction $B\to M$ of the coframe bundle of $M$, given by equations \eqref{eq:adapted1}-\eqref{eq:adapted2}, a unique $\R^*$-connection form $\alpha$ on $B$ satisfying equations \eqref{eq:str3} and the 1-form
\be\label{eq:feff2}
\sigma:=-{2\over 3}\alpha+{1\over 6}K\omega^3,
\ee
where $K$ is defined via equations \eqref{eq:str4}, and where $\omega^1,\omega^2,\omega^3$ are the tautological 1-forms on the coframe bundle of $M$ restricted to $B$. Then
\be\label{eq:feff3}\grm:=\omega^1\cdot\omega^2+\omega^3\cdot\sigma
\ee
is a pseudo-Riemannian metric on $B$ of  signature $(2,2)$. There is also associated with $\eta^3$ a decomposition $\S D^*=\S D^+\cup \,\S D^-$ and  $\R^*$-isomorphisms $h:B\to \S D^+$, $J\circ h: B \to D^-$,  where $h$ is given by equation \eqref{eq:hiso} and $J$ by equation \eqref{eq:J}, such that the conformal class of the induced metric on  $\S D^*$  is independent of the choice of $\eta^3$; in fact,  multiplying $\eta^3$ by a smooth non-vanishing function rescales the induced metric on $\S D^*$ by the same factor. 

\end{theorem}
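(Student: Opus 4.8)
The plan is to replay the proof of Theorem~\ref{thm:main} with the Riemannian $\SO(2)$-data replaced by the split-signature $\R^*$-data, checking signature $(2,2)$ along the way. The signature is in fact immediate and independent of any choice: on the $4$-manifold $B$ the semibasic forms $\omega^1,\omega^2,\omega^3$ together with the connection form $\alpha$ form a coframe, and in it the Gram matrix of $\grm=\omega^1\cdot\omega^2+\omega^3\cdot\sigma$, $\sigma=-\tfrac23\alpha+\tfrac16 K\omega^3$, is block diagonal: a hyperbolic block in the $(\omega^1,\omega^2)$-plane, of signature $(1,1)$, and the block $\left(\begin{smallmatrix} K/6 & -1/3\\ -1/3 & 0\end{smallmatrix}\right)$ in the $(\omega^3,\alpha)$-plane, of determinant $-1/9<0$, hence again of signature $(1,1)$. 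So $\grm$ is nondegenerate of signature $(2,2)$, and transporting it by the diffeomorphisms $h$ and $J\circ h$ yields metrics of signature $(2,2)$ on $\S D^+$ and $\S D^-$, i.e.\ on $\S D^*$.

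For the conformal invariance, note that on a connected open set a second contact form is $\tilde\eta^3=\mu\,\eta^3$ with $\mu$ nowhere vanishing, so $\mu=\pm\lambda^2$ for a positive $\lambda$. Replacing $\eta^3$ by $-\eta^3$ interchanges $D^+\leftrightarrow D^-$ and, as one sees directly from the structure equations \eqref{eq:str3}--\eqref{eq:str4}, sends $(\alpha,\omega^3,K)$ to $(\alpha,-\omega^3,-K)$, hence fixes $\sigma$ and negates $\grm$ (consistent with rescaling by $\mu=-1$) while intertwining $h$ with $J\circ h$; so it suffices to treat $\tilde\eta^3=\lambda^2\eta^3$ with $\lambda>0$. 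As in the CR case this reduces to an infinitesimal statement: for a differentiable family $\lambda(t)$ with $\lambda(0)=1$ one must show $\dot\grm=2\dot\lambda\,\grm$. I would first reprove the analogues of the three lemmas behind Theorem~\ref{thm:main}: expanding $\d(\d\lambda)=0$ yields relations among $\lambda_i,\lambda_{ij},\lambda_{i0}$; checking \eqref{eq:adapted1}--\eqref{eq:adapted2} and the normalization \eqref{eq:hiso} for a suitably rescaled vector shows $f\circ\eta=\Lambda\eta$ for an explicit matrix $\Lambda$ with diagonal part $(\lambda,\lambda,\lambda^2)$ and third-column entries built from $\lambda_1,\lambda_2$; and the same formal computation as in Lemma~\ref{lemma:pb} gives $f^*\omega_{\tilde B}=\Lambda\omega$.

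With $\tilde\omega=\Lambda\omega$ in hand I would differentiate at $t=0$: compute $\dot\omega^1,\dot\omega^2,\dot\omega^3$, substitute into \eqref{eq:feff3} to obtain $\dot\grm$, and read off that $\dot\grm=2\dot\lambda\,\grm$ is equivalent to a single linear identity for $\dot\sigma$ (a prescribed combination of the $\dot\lambda_i\,\omega^j$). To evaluate the left side, extract $\tilde\alpha$ by substituting $\tilde\omega=\Lambda\omega$ into \eqref{eq:str3} and matching coefficients, differentiate to get $\dot\alpha$ as a combination of $\omega^1,\omega^2,\omega^3$ with coefficients linear in $\dot\lambda_i$ and $\dot\lambda_{11},\dot\lambda_{22}$, and then obtain $\dot K$ by the shortcut used for Theorem~\ref{thm:main}: $K$ is characterized by $\d\alpha\equiv K\,\omega^1\wedge\omega^2\ (\mathrm{mod}\ \alpha,\omega^3)$, so applying $\d$ to the formula for $\dot\alpha$ modulo $(\alpha,\omega^3)$ and comparing with the $t$-derivative of $\d\tilde\alpha\equiv\tilde K\,\tilde\omega^1\wedge\tilde\omega^2$ expresses $\dot K$ through $\dot\lambda\,K$ and $\dot\lambda_{11}+\dot\lambda_{22}$. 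Inserting $\dot\alpha$ and $\dot K$ into a trial form $\sigma=c_1\alpha+c_2 K\omega^3$, the $\dot\lambda_{11},\dot\lambda_{22}$ terms must cancel and the $\dot\lambda_i\,\omega^j$ terms must match the required identity; these constraints should have the unique solution $(c_1,c_2)=(-\tfrac23,\tfrac16)$, which is \eqref{eq:feff2}.

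The main obstacle is the same as in the CR case: obtaining the transformation law of the connection form $\alpha$ (hence $\dot\alpha$, hence $\dot K$) with all signs and coefficients as dictated by the \emph{indefinite} structure equations \eqref{eq:str3}--\eqref{eq:str4}; a slip here shifts the ``magic'' constants and wrecks the argument. The remaining ingredients --- tracking the $\R^*$-weights in $\Lambda$, and dealing with the sign of the conformal factor together with the $\S D^+$ versus $\S D^-$ bookkeeping --- are routine and need no new idea.
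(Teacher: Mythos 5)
Your proposal is correct and follows essentially the same route as the paper: the paper's proof is precisely a rerun of the CR argument (Theorem \ref{thm:main}) with the $\R^*$-structure equations \eqref{eq:str3}--\eqref{eq:str4}, recording the modified identities for $\lambda_{i0}$, the transformation law of $\alpha$, the shortcut computation of $\dot K$, and the resulting determination of the constants $(-\tfrac23,\tfrac16)$, exactly as you outline. Your explicit Gram-matrix check of signature $(2,2)$ and your handling of a general non-vanishing factor $\mu=\pm\lambda^2$ (with $-\eta^3$ swapping $\S D^\pm$ and negating $\grm$) are correct and merely make explicit points the paper leaves implicit in its setup.
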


\begin{proof} The proof is very similar to the CR case.  Here are the formulas that differ:
\begin{align*}
\lambda_{10} &= \lambda_1, \qquad
\lambda_{20}  =-\lambda_2, \qquad
\lambda_{12}-\lambda_{21} = \lambda_3,\\
\tilde\alpha &=\alpha+{3\lambda_1\over\lambda}\omega^1-{3\lambda_2\over\lambda}\omega^2-\left({\lambda_{12}+\lambda_{21}\over \lambda}+{6\lambda_1\lambda_2\over\lambda^2}\right)\omega^3,\\
\dot\alpha &=3\dot\lambda_1\omega^1-3\dot\lambda_2\omega^2-\left(\dot\lambda_{12}+\dot\lambda_{21}\right)\omega^3,\\
\d\dot\alpha&\equiv-4\left(\dot\lambda_{12}+\dot\lambda_{21}\right)\omega^1\wedge\omega^2\equiv\left(\dot K+2\dot\lambda K\right)\omega^1\wedge\omega^2\ \pmod{\alpha, \omega^3},\\
\dot K&=-2\dot\lambda K-4\left(\dot\lambda_{12}+\dot\lambda_{21}\right),\\
\dot\grm&=2\dot\lambda\grm+(\dot\sigma+2\dot\lambda_1\omega^1-2\dot\lambda_2\omega^2).
\end{align*}
\qedhere
\end{proof}

\begin{definition}A {\em chain} of a path geometry $(M,L_1, L_2)$ is the projection to $M$ of an unparametrized non-vertical null geodesic of the associated Fefferman conformal metric on $\S D^*$.
\end{definition}

\begin{proposition}
~
\begin{enumerate}

\item The $\R^*$-action on $\S D^*$ is by conformal isometries.

\item  For every point in $M$, in every given direction  transverse to $D$, there is a unique chain passing through this point in the given direction.

\item  The fibers of $\S D^*\to M$ are null geodesics and project to constant curves on $M$. 

\end{enumerate}
\end{proposition}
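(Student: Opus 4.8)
The plan is to prove all three statements by computing on the total space of $B\to M$, identified with $\S D^+$ via $h$ (and with $\S D^-$ via $J\circ h$), using the global coframe $(\omega^1,\omega^2,\omega^3,\alpha)$, the structure equations \eqref{eq:str3}--\eqref{eq:str4}, and the explicit metric $\grm=\omega^1\cdot\omega^2+\omega^3\cdot\sigma$ with $\sigma=-\tfrac23\alpha+\tfrac16K\omega^3$; throughout, $Z$ denotes the vector field on $B$ dual to $\alpha$ in this coframe, i.e. $\alpha(Z)=1$ and $\omega^i(Z)=0$, which is the fundamental vector field of the $\R^*$-action and satisfies $\d\pi(Z)=0$, so its integral curves are the fibers of $B\to M$. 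For claim (1): the $\R^*$-action multiplies $\omega^1$ and $\omega^2$ by reciprocal nonzero factors and fixes $\omega^3$; since $\R^*$ is abelian it fixes the connection form $\alpha$, and then comparing \eqref{eq:str4} with its pullback (both $\d\alpha$ and $\omega^1\wedge\omega^2$ being fixed) shows it fixes the function $K$. Hence $R_s^*\sigma=\sigma$ and $R_s^*\grm=\grm$ for all $s\in\R^*$, so $\R^*$ acts on $B$ by isometries of $\grm$; since $h$ and $J\circ h$ are $\R^*$-equivariant, the induced metrics on $\S D^\pm$ are $\R^*$-invariant, and in particular the $\R^*$-action on $\S D^*$ acts by conformal isometries of the Fefferman conformal class.

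For claim (3), differentiating $R_s^*\grm=\grm$ gives $\mathcal{L}_Z\grm=0$, so $Z$ is Killing. Reading off $\grm$ using $\omega^i(Z)=0$ one finds $\grm(Z,\cdot)=-\tfrac13\,\omega^3$ (up to the normalization of the symmetric product); in particular $\grm(Z,Z)=0$, so the fibers are null. Then $Z^\flat=-\tfrac13\omega^3$ gives $\d(Z^\flat)=-\tfrac13\,\d\omega^3=-\tfrac13\,\omega^1\wedge\omega^2$ by \eqref{eq:str3}, and for a Killing field one has $(\nabla_ZZ)^\flat=\tfrac12\,\iota_Z\,\d(Z^\flat)=-\tfrac16\bigl(\omega^1(Z)\,\omega^2-\omega^2(Z)\,\omega^1\bigr)=0$, so $\nabla_ZZ=0$. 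Thus the fibers of $B\to M$, and hence of $\S D^*\to M$, are null affinely parametrized geodesics, and they project to single points of $M$.

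For claim (2), fix $p\in M$ and a direction $v\in T_pM$ transverse to $D_p$, and pick a coframe $u\in B$ over $p$. A vector $\hat v\in T_uB$ with $\d\pi(\hat v)=v$ must have $\omega^i(\hat v)=u^i(v)$ for $i=1,2,3$, with $\alpha(\hat v)$ still free; transversality gives $u^3(v)\neq 0$, so the null condition
\[
\grm(\hat v,\hat v)=u^1(v)\,u^2(v)-\tfrac23\,u^3(v)\,\alpha(\hat v)+\tfrac16\,K\,u^3(v)^2=0
\]
determines $\alpha(\hat v)$ uniquely. Let $\gamma$ be the geodesic of $\grm$ with $\gamma(0)=u$, $\gamma'(0)=\hat v$; by claim (3) and uniqueness of geodesics a geodesic that is vertical at one point is a fiber, so $\gamma$ is non-vertical everywhere and $c:=\pi\circ\gamma$ is a chain through $p$ in the direction $v$. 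For uniqueness, any other such chain is $\pi\circ\gamma^\dagger$ for a non-vertical null geodesic $\gamma^\dagger$, which we may take in $\S D^+\cong B$ since $J\colon\S D^+\to\S D^-$ is a fibrewise isometry over $\id_M$ (immediate from the definition of the metric on $\S D^-$); at a parameter where $\gamma^\dagger$ lies over $p$ it meets the fiber at some $u^\dagger=R_s(u)$, and $R_s^{-1}\circ\gamma^\dagger$ is then a null geodesic through $u$ whose velocity projects to a nonzero multiple of $v$, hence by uniqueness of the null lift (rescaling the velocity only reparametrizes) agrees with $\gamma$ up to affine reparametrization; since $\pi\circ R_s=\pi$, the two projected chains coincide.

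The computations in (1) and (3) are short once $Z$ and the identity $Z^\flat=-\tfrac13\omega^3$ are in hand; the real work is the bookkeeping in (2). The delicate point is independence of the lift: one must verify that, as $u$ ranges over the fiber over $p$, the null lift $\hat v$ transforms by the differential of the $\R^*$-action (so $c=\pi\circ\gamma$ does not depend on $u$), and conversely that every non-vertical null geodesic lying over a curve through $p$ in the direction $v$ arises from this one null lift up to the $\R^*$-action and reparametrization. This is exactly where the shape of $\grm$ enters: the null condition is affine and non-degenerate in the vertical variable $\alpha(\hat v)$ precisely when the direction is transverse to $D$, which both produces the unique lift here and (as it should) breaks down for directions tangent to $D$.
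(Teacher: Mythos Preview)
Your proof is correct and follows essentially the same approach as the paper: for (1) you use the $\R^*$-equivariance of the tautological forms together with invariance of $\alpha$ and $K$ to get invariance of $\grm$; for (3) you use that the fundamental vector field $Z$ is Killing and compute $\nabla_ZZ$ via $Z^\flat$ (a constant multiple of $\omega^3$) and $\d\omega^3=\omega^1\wedge\omega^2$, exactly as the paper does with its field $\zeta$ (normalized so $\alpha(\zeta)=3$); for (2) you produce the unique null lift of a transverse $v$ and then use the isometric $\R^*$-action to show independence of the lift. Your extra remarks (non-verticality of $\gamma$ via (3), and the role of $J$ for $\S D^-$) are sound refinements but do not change the underlying argument.
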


\begin{proof}

\mn (1) For every contact form $\eta^3$, the map $h:B\to \S D^+$ (by definition, a conformal isometry) is $\R^*$-equivariant, hence it is enough to verify that the pseudo-Riemannian metric on $B$ given by equations \eqref{eq:feff2}--\eqref{eq:feff3} is $\R^*$-invariant. This follows from the $\R^*$-invariance of   $\alpha,\omega^3, K$ and the  $\R^*$-equivariance  $R_s^*\omega^1=\omega^1/s,$ $R_s^*\omega^2=s\omega^2$.

\mn (2) Let $x\in M$ and $v\in T_xM,$ $v\not\in D_x$. Pick a contact form $\eta^3$ and work on the
 associated bundle $B$. The fiber $B_x$ consists of coframes $u=(u^1, u^2, u^3)$ on
  $T_xM$ adapted to $\eta^3$, as in equations \eqref{eq:adapted1}--\eqref{eq:adapted2}. We show that for every $u\in B_x$ there is  a unique lift  $\tilde v\in T_u B$ of $v$ which is null. Now $\tilde v$ is a
   lift of $v$ if and only if $\omega^i(\tilde v) =u^i( v)$, $i=1,2,3$. It remains to determine $\sigma(\tilde v)$.  Now  $\omega^3(\tilde v) =u^3( v)\neq 0$
and, by formula \eqref{eq:feff3},  $\tilde v$ is null if and only if $\omega^1(\tilde v)\omega^2(\tilde v)+\omega^3(\tilde v)\sigma(\tilde v)=0,$  i.e. $\sigma(\tilde v)=-u^1(v)u^2(v)/u^3(v).$
   This shows that $v\in T_xM$ has a unique null lift at $u\in B_x.$
   The null geodesic through $u$ in the direction of $\tilde v$ projects to a chain through $x$ in the direction of $v$.
   This proves existence of the required chain.
   As for uniqueness, we need to show that if we repeat the above at another point of $B_x$,
   say $s\cdot u\in B_x$, we obtain the same chain. We use the fact that $s$ acts on $B$
   by isometries $R_s$, mapping $\tilde v$ to the unique null-lift of $v$ at $s\cdot u$,
   and the null geodesic through $u$ tangent to $\tilde v$ to the null geodesic through $s\cdot u$ in the direction of $(R_s)_* \tilde v$. Since $R_s$ commutes with the projection $B\to M$, the two null geodesics project to the same chain in $M$.

\mn (3) The vertical distribution of  $B\to M$ is given by  $\omega^1 = \omega^2 = \omega^3 = 0$, thus $\grm=\omega^1 \cdot \omega^2 + \omega^3 \cdot \sigma$ restricted to  the fibers vanishes, so these fibers are null curves. We proceed to show that they are null geodesics.

As shown in part (1) above, the principal $\R^*$-action on $B$ is isometric. Let $\zeta$ denote an infinitesimal generator of this action (i.e. a nonzero vertical null Killing vector field on $B$).  The fibers of $B\to M$ are the integral curves of $\zeta$, hence to show that these fibers are null geodesics it is enough to show that  $\nabla_\zeta \zeta = 0$, or in index notation,
\[
    \zeta^b \nabla_b \zeta^a = 0 .
\]
Lowering an index of $\nabla_b \zeta^a$ (using $\grm$), splitting $\nabla_b \zeta_a$ into its symmetric and antisymmetric parts, and contracting with $\zeta^b$ gives
\be\label{eq:zeta}
    \zeta^b \nabla_b \zeta_a = \zeta^b \cdot \frac{1}{2}(\nabla_a \zeta_b + \nabla_b \zeta_a) + \zeta^b \cdot \frac{1}{2}(\nabla_a \zeta_b - \nabla_b \zeta_a) .
\ee
The quantity $\frac{1}{2} (\nabla_a \zeta_b + \nabla_b \zeta_a)$ in the first term is $(\mathcal L_\zeta g)_{ab}$, but, per part (1), $\grm$ is $\zeta$-invariant---that is, $\mathcal L_\xi g = 0$---and so the first term vanishes.

The quantity $\frac{1}{2} (\nabla_a \zeta_b - \nabla_b \zeta_a)$ in the second term is $(d\zeta^\flat)_{ab}$, so the second term is $-\iota_{\zeta} (d\zeta^\flat)$, where $\iota_{\zeta}$ denotes interior multiplication by $\zeta$. Since $\zeta$ generates the $\R^*$-action on $B \to M$ and $\alpha$ is a connection form thereon, $\alpha(\zeta)$ is a nonzero constant, and by rescaling $\zeta$ by a nonzero constant we may as well assume  $\alpha(\zeta) = 3$. Lowering an index with $\grm$ (equations \eqref{eq:feff2}--\eqref{eq:feff3}) then gives  $\zeta^\flat =   -\omega^3$, so the third equation of \eqref{eq:str3} yields $-\iota_{\zeta} (\d\zeta^\flat) = \iota_{\zeta} \d  \omega^3 = \iota_{\zeta} (\omega^1 \wedge \omega^2) = 0$.
\end{proof}

\begin{remark}
In fact, chains come equipped with a preferred projective structure (see, e.g., \cite[Theorem 5.3.7]{CS}, which applies to all so-called parabolic contact structures), but we do not need that structure here.
\end{remark}

\subsection{Chains of $y''=f(x,y,y')$}

Here $\Sigma=J^0(\R,\R)=\R^2$, with coordinates $(x,y)$, and $M=J^1(\R,\R)=\R^3$, with coordinates $(x,y,p)$ and contact distribution $D=\Ker(\d y-p\, \d x).$  The paths in $\Sigma$ are graphs of solutions $y(x)$ to $y''=f(x,y,y')$, and their lifts to $M$ are  graphs of their first jets,  $(x, y(x))\mapsto (x, y(x), y'(x))$.
So here %
$$L_1=\Span \left(\partial_p\right),\quad  L_2=\Span \left[\partial_x+p\partial_y+f(x,y,p)\partial_p\right].$$
We fix the contact form $\eta^3:=\d y- p\,  \d x$. An adapted coframe on $M$,  satisfying equations \eqref{eq:adapted1}--\eqref{eq:adapted2},  is
\be\label{eq:etas}
\eta^1:=\d p-f\d x, \quad\eta^2:=-\d x, \quad \eta^3:=\d y-p\,\d x.
\ee
Any other adapted coframe is of the form $s\cdot\eta=(\eta^1/s, s\eta^2, \eta^3)^t$, $s:M\to\R^*$. This defines an identification $\R^3\times \R^*\to B$, $(x,y,p,s)\mapsto s\cdot\eta(x,y,p).$ Under this identification,
\be\label{eq:omegas}
\omega^1=\eta^1/s, \quad \omega^2=s\eta^2, \quad \omega^3=\eta^3.
\ee

The following proposition was proved in \cite[equation (31)]{NS} by solving the full equivalence problem for path geometry.

\begin{proposition} The Fefferman metric on $B=J^1(\R,\R)\times \R^*$ is
 \be\label{eq:fef22}\grm=-\d x\cdot (\d p-f \d x ) +
 {1\over 6} (\d y - p\,\d x) \cdot \left[4 f_p \d x + f_{pp}(\d y  - p\,\d x )-4 \d\tau\right],
\ee
 where $\d\tau=\d s/s. $
\end{proposition}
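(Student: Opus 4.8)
The plan is to obtain \eqref{eq:fef22} directly from the construction of the previous theorem---rather than from the full equivalence problem used in \cite{NS}---by running that construction explicitly on the coframe \eqref{eq:etas}, working throughout in the coordinates $(x,y,p,s)$ on $B=J^1(\R,\R)\times\R^*$ and writing $\d\tau:=\d s/s$. Everything reduces to three steps: (i) find the $\R^*$-connection form $\alpha$ from the structure equations \eqref{eq:str3}; (ii) read off $K$ from \eqref{eq:str4}; (iii) substitute into \eqref{eq:feff2}--\eqref{eq:feff3} and simplify. First I would compute the exterior derivatives of the coordinate forms $\eta^1=\d p-f\,\d x$, $\eta^2=-\d x$, $\eta^3=\d y-p\,\d x$; using $\d x=-\eta^2$, $\d y=\eta^3-p\,\eta^2$, $\d p=\eta^1-f\,\eta^2$ to re-expand gives
\[
\d\eta^1=f_p\,\eta^1\wedge\eta^2-f_y\,\eta^2\wedge\eta^3,\qquad \d\eta^2=0,\qquad \d\eta^3=\eta^1\wedge\eta^2 .
\]
Passing to the tautological forms on $B$ via \eqref{eq:omegas}, i.e. $\eta^1=s\,\omega^1$, $\omega^2=s\,\eta^2=-s\,\d x$, $\omega^3=\eta^3=\d y-p\,\d x$, and using $\d s=s\,\d\tau$, this yields $\d\omega^1,\d\omega^2,\d\omega^3$ as combinations of the $\omega^i\wedge\omega^j$ and $\d\tau\wedge\omega^i$ with coefficients built from $f$, $f_p$, $f_y$.

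The second step is to match these against \eqref{eq:str3}. Term-by-term comparison forces
\[
\alpha=\d\tau+\tfrac{f_p}{s}\,\omega^2=\d\tau-f_p\,\d x ,
\]
together with $a_1=-f_y/s^2$ and $a_2=0$; one checks directly that this $\alpha$ satisfies all three equations of \eqref{eq:str3} with no residual terms, so by the uniqueness statement of the theorem it is the required connection form. In the form $\alpha=\d\tau-f_p\,\d x$ the differentiation is immediate: the $\d\tau$ term is closed, so $\d\alpha=-\d f_p\wedge\d x=f_{py}\,\d x\wedge\d y+f_{pp}\,\d x\wedge\d p$, and re-expanding in the $\omega$-coframe (using $\d x\wedge\d y=-\eta^2\wedge\eta^3=-\tfrac1s\,\omega^2\wedge\omega^3$ and $\d x\wedge\d p=\eta^1\wedge\eta^2=\omega^1\wedge\omega^2$) gives $\d\alpha=-\tfrac{f_{py}}{s}\,\omega^2\wedge\omega^3+f_{pp}\,\omega^1\wedge\omega^2$. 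Comparing with \eqref{eq:str4} reads off $K=f_{pp}$ (and $b_1=-f_{py}/s$, $b_2=0$, which are not needed).

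Finally I would assemble the metric. By \eqref{eq:feff2},
\[
\sigma=-\tfrac23\,\alpha+\tfrac16\,K\,\omega^3=-\tfrac23\,\d\tau+\tfrac23\,f_p\,\d x+\tfrac16\,f_{pp}\,(\d y-p\,\d x) ,
\]
and $\omega^1\cdot\omega^2=\eta^1\cdot\eta^2=-\d x\cdot(\d p-f\,\d x)$, $\omega^3=\d y-p\,\d x$; substituting into $\grm=\omega^1\cdot\omega^2+\omega^3\cdot\sigma$ from \eqref{eq:feff3} and collecting the terms proportional to $\d y-p\,\d x$ yields exactly \eqref{eq:fef22}. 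There is no real conceptual obstacle here---the preceding theorem supplies the construction---so the main thing to be careful about is the bookkeeping: the various factors of $s$, the signs coming from $\d x=-\eta^2$ and from the $-\alpha\wedge\omega^1$ term of \eqref{eq:str3}, and the cancellation of the $\d\tau$-contributions in $\d\alpha$ (which is forced, since \eqref{eq:str4} requires $\d\alpha$ to be semi-basic).
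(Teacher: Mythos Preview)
Your proof is correct and follows exactly the same approach as the paper: compute $\alpha$ and $K$ from the structure equations \eqref{eq:str3}--\eqref{eq:str4} applied to the coframe \eqref{eq:etas}--\eqref{eq:omegas}, then substitute into \eqref{eq:feff2}--\eqref{eq:feff3}. The paper's own proof simply records the outcome $\alpha=-f_p\,\d x+\d\tau$, $K=f_{pp}$ without showing the intermediate differentiations, whereas you have spelled out the computation of $\d\eta^i$, the identification of $a_1,a_2$, and the expansion of $\d\alpha$; all of your intermediate formulas are correct, including the signs and the factors of $s$.
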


\begin{proof}
Solving equations \eqref{eq:str3}--\eqref{eq:str4}, with $\omega^i$ given by equations \eqref{eq:etas}--\eqref{eq:omegas}, we obtain
$$\alpha=-f_p\d x+\d\tau ,\  K=f_{pp} \ \Longrightarrow\  \sigma={2\over 3}f_p\d x +{1\over 6}f_{pp}(\d y - p \d x)-{2\over 3}{\d \tau}.
$$
Using this in equations \eqref{eq:feff3}-\eqref{eq:feff3} gives the claimed formula.
\end{proof}

\begin{proposition}\label{prop:yp}
The chains of the path geometry corresponding to a 2nd order  ODE $y''=f(x,y,y')$ are the curves in $J^1(\R,\R)$ which are the graphs of solutions $(y(x), p(x))$ of  the system
\begin{align}\label{eq:yp1}
y''&=  f+ f_p \Delta+ \frac{1}{2}f_{pp} \Delta^2 + {1\over 6}f_{ppp}  \Delta^3\\
\begin{split}p''&=-\frac{2 (p'-f)^2}\Delta +f_p(3 p'-2 f )+f_x+pf_y+\left[ f_{pp}(p' -f)+2f_y\right]\Delta  \\
&+\frac{1}{6}\left[ f_{ppp} (p'-2f)   -f_{xpp}+4f_{yp}-pf_{ypp} ) \right] \Delta ^2, \label{eq:yp2}
\end{split}
\end{align}
where $\Delta=y'-p.$
\end{proposition}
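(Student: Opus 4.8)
The plan is to compute the null geodesics of the Fefferman metric $\grm$ of \eqref{eq:fef22} directly, projecting them to $M = J^1(\R,\R)$ and then to $\Sigma$. First I would exploit the structure result already in hand: by Proposition 3.x (existence/uniqueness of chains) every chain through a point of $M$ in a direction transverse to $D$ is determined, so it suffices to produce, for each such initial condition, a curve satisfying \eqref{eq:yp1}--\eqref{eq:yp2} and check it lifts to a null geodesic. Equivalently — and this is the route I would actually take — I would parametrize a candidate chain in $M$ as $(x(u), y(u), p(u))$, lift it to $B = J^1(\R,\R) \times \R^*$ by choosing the fiber coordinate $s(u)$, and impose the geodesic equations for $\grm$. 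Since chains are unparametrized, I can normalize the parametrization, say by $x = u$, so that a chain is a graph $(y(x), p(x))$ as stated, with $\Delta := y' - p$ measuring the failure of the curve to be a lift of a path (a genuine lifted path has $p = y'$, i.e. $\Delta = 0$).

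The key computation is the null-geodesic system for $\grm$. I would first record the inverse metric and the Christoffel symbols of \eqref{eq:fef22} in the coordinates $(x, y, p, \tau)$; because $\grm$ has the block form $\omega^1 \cdot \omega^2 + \omega^3 \cdot \sigma$ with $\sigma$ linear in $\d\tau$, the $\tau$-direction is null and the relevant Christoffel symbols are manageable — this is the same phenomenon that made the fibers null geodesics in Proposition 3.x. The null condition $\grm(\dot\gamma, \dot\gamma) = 0$ together with the geodesic equation for the $\tau$-component will pin down $\d\tau/\d x = \dot s/s$ along the chain (up to the expected reparametrization freedom), eliminating the fiber variable. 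Substituting back into the geodesic equations for the $x$, $y$, $p$ components, and using $x = u$ to convert to graph form, yields second-order ODEs for $y(x)$ and $p(x)$. I expect the $y$-equation to collapse, after collecting powers of $\Delta$, to the clean cubic \eqref{eq:yp1}: note that the right-hand side is exactly the third-order Taylor expansion of $f(x, y, y')$ about $y' = p$, i.e. $f(x,y,p) + f_p \Delta + \tfrac12 f_{pp}\Delta^2 + \tfrac16 f_{ppp}\Delta^3$ truncated because $f$ enters $\grm$ only through $f$, $f_p$, $f_{pp}$ and their first derivatives — so no higher terms can appear. The $p$-equation \eqref{eq:yp2} is the genuinely messy one and will require careful bookkeeping of the derivatives $f_x, f_y, f_{xpp}, f_{yp}, f_{ypp}$ coming from differentiating $\alpha = -f_p\,\d x + \d\tau$ and $K = f_{pp}$ along the curve.

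The main obstacle is purely computational: organizing the substitution of the Christoffel symbols into the geodesic equations so that the $\tau$-elimination is transparent and the coefficients of $\Delta^0, \Delta^1, \Delta^2$ in \eqref{eq:yp2} come out in the stated form. A useful sanity check, which I would include, is the case $\Delta = 0$: then \eqref{eq:yp1} reads $y'' = f$, recovering that lifts of paths are chains precisely when $f$ is (at most) cubic in $y'$ — consistent with Theorem \ref{thm:proj_chains} — and \eqref{eq:yp2} must reduce to the prolongation $p'' = f_x + p f_y + f f_p$ of $p = y'$ along a solution of $y'' = f$, which gives an independent verification of the lower-order terms. A second check is $\R^*$-equivariance: the whole construction is invariant under the fiberwise scaling, which is automatically respected since $\Delta$, $y$, $p$, and $x$ are all functions on $M$, not on $B$; this confirms that the fiber variable $s$ genuinely drops out as claimed. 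With these checks in place, matching the remaining coefficients is routine though lengthy, and I would relegate the bulk of it to a direct verification (or cite the agreement with \cite[equation (31)]{NS} noted before the previous proposition).
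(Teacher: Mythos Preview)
Your approach is essentially the paper's: write the affinely-parametrized geodesic equations of the Fefferman metric \eqref{eq:fef22}, eliminate $\dot\tau$ using the null condition, and then convert to $x$-graph form via $y'' = (\ddot y\,\dot x - \ddot x\,\dot y)/\dot x^3$, $p'' = (\ddot p\,\dot x - \ddot x\,\dot p)/\dot x^3$. One caution on your proposed $\Delta = 0$ sanity check: a curve with $\Delta \equiv 0$ is tangent to $D$ and hence is \emph{never} a chain (Theorem \ref{thm:proj_chains} says chains \emph{project} to paths, not that lifted paths \emph{are} chains), and correspondingly the $-2(p'-f)^2/\Delta$ term in \eqref{eq:yp2} is genuinely singular there, so that limit is not a useful consistency check.
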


\begin{proof} Using the metric \eqref{eq:fef22}, we write the geodesic equations on $B$,
$$\begin{aligned}
\ddot x&=\frac{1}{6} \left[\left(p \dot{x}-\dot{y}\right) \left(f_{{ppp}} \left(\dot{y}-p \dot{x}\right)+2 \dot{x} f_{{pp}}\right)-2 \dot{x}^2 f_p-4 \dot{\tau } \dot{x}\right],\\
\ddot y &=\frac{1}{6} \left[2 \dot{x} \left(p f_{{pp}} \left(p \dot{x}-\dot{y}\right)-p \dot{x} f_p-2 p \dot{\tau }+3 \dot{p}\right)-p f_{{ppp}} \left(\dot{y}-p \dot{x}\right)^2\right],\\
\ddot p &=\frac{1}{6} \left[
-p^3 \dot{x}^2 f_{{ypp}}+2 p^2 \dot{x} \dot{y} f_{{ypp}}+4 p^2 \dot{x}^2 f_{{yp}}
\right.\\ &\left.\qquad\quad
-2 f \left(\left(\dot{y}-p \dot{x}\right) \left(f_{{ppp}} \left(\dot{y}-p \dot{x}\right)+2 \dot{x} f_{{pp}}\right)+2 \dot{x}^2 f_p+4 \dot{\tau } \dot{x}\right)
\right.\\ &\qquad\quad\left.
+ 2 \dot{p}f_{{pp}}( \dot{y} -  p \dot{x})-f_{{xpp}} \left(\dot{y}-p \dot{x}\right)^2-8 p \dot{x} \dot{y} f_{{yp}}-6 p \dot{x}^2 f_y
\right.\\ &\qquad\quad\left.
+8 \dot{p} \dot{x} f_p-p \dot{y}^2 f_{{ypp}}+12 \dot{x} \dot{y} f_y+6 \dot{x}^2 f_x+4 \dot{y}^2 f_{{yp}}+4 \dot{p} \dot{\tau }\right].
\end{aligned}
$$
(We do not need the $\tau$ equation.) Next use   formula  \eqref{eq:fef22} and the nullity condition to eliminate $\dot \tau$ from the equations,
$$
\dot{\tau }={1\over 4} \left(f_{{pp}} \left(\dot{y}-p \dot{x}\right)+4 \dot{x} f_p\right)
- {3\over 2}{ \dot{x}(\dot{p}-f \dot{x})\over  (\dot{y}-p \dot{x}) },
$$
($\tau$ itself does not appear explicitly, because of the $\R^*$-invariance of the metric). Then substitute into $ y''=(\ddot y\dot x-\ddot x \dot y)/\dot x^3, p''=(\ddot p\dot x-\ddot x \dot p)/\dot x^3$ the expressions for $\ddot x, \ddot y, \ddot p$ from the geodesic equations, and finally make the substitutions  $\dot y=\dot x y', \dot p=\dot x p'$ to obtain the desired equations (all instances of $\dot x$ cancel out because the geodesic equation is homogeneously quadratic in velocities).
\end{proof}

\subsection{Chains of projective path geometries}\label{sect:proj_chains}

Here we prove Theorem \ref{thm:proj_chains}, which was announced in the introduction. Recall that, by definition,  a path geometry is projective if  the paths are the (unparametrized) geodesics of a torsion-free affine connection.

\mn{\bf Theorem \ref{thm:proj_chains}}. {\em A path geometry on a 2-dimensional manifold $\Sigma$ is  projective   if and only if all chains on $\P T\Sigma$ project to the paths in $\Sigma$.}
\begin{proof}
This is a local statement so we can assume without loss of generality the situation  studied in the previous subsection, i.e.    the paths  are given in the $xy$ plane by  graphs of solutions $y(x)$ of $  y'' = f(x, y, y')$ for some smooth $f$,  and  the associated chains in $xyp$-space are  the graphs of solutions $(y(x), p(x))$ to  the chain equations \eqref{eq:yp1}--\eqref{eq:yp2} of Proposition \ref{prop:yp}. As is well known, such a path geometry is projective if and only if $f(x, y, p)$ is a polynomial in $p$ of degree at most 3 (see \cite{CartanProjective}, also Section 4 of \cite{D}).
 The statement we are to prove therefore reduces to the following lemma:

\begin{lemma}
Every solution $(y(x), p(x))$ of equations \eqref{eq:yp1}--\eqref{eq:yp2}  satisfies $y''=f(x,y,y')$ if and only if $f(x,y,p)$ is polynomial in $p$ of degree at most $3$.
\end{lemma}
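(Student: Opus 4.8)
The plan is to recognize the right-hand side of \eqref{eq:yp1} as a third-order Taylor polynomial, thereby reducing the lemma to a statement about a Taylor remainder. Write $\Delta = y'-p$; with all derivatives of $f$ evaluated at $(x,y,p)$, the quantity $f + f_p\Delta + \tfrac12 f_{pp}\Delta^2 + \tfrac16 f_{ppp}\Delta^3$ is exactly the degree-$3$ Taylor polynomial of $q\mapsto f(x,y,q)$ about $q=p$, evaluated at $q=p+\Delta=y'$. So I would set
\[
R(x,y,p,q) := f(x,y,q) - f(x,y,p) - f_p(x,y,p)(q-p) - \tfrac12 f_{pp}(x,y,p)(q-p)^2 - \tfrac16 f_{ppp}(x,y,p)(q-p)^3,
\]
so that \eqref{eq:yp1} reads $y'' = f(x,y,y') - R(x,y,p,y')$ along any chain. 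The integral form of Taylor's theorem gives $R(x,y,p,q)=\tfrac16\int_p^q(q-t)^3 f_{pppp}(x,y,t)\,\d t$, hence $R\equiv0$ if and only if $f_{pppp}\equiv0$, i.e. if and only if $f$ is polynomial of degree at most $3$ in $p$ (equivalently, setting $p=0$: $R\equiv0$ says $f(x,y,q)$ equals its own cubic Taylor polynomial in $q$). This already settles the ``if'' direction: when $f$ is at most cubic in $p$, $R\equiv0$, so \eqref{eq:yp1} forces $y''=f(x,y,y')$ for every solution of \eqref{eq:yp1}--\eqref{eq:yp2}, i.e. every chain projects to a path.

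For the converse I would realize enough initial data along chains. Assume every solution $(y(x),p(x))$ of \eqref{eq:yp1}--\eqref{eq:yp2} satisfies $y''=f(x,y,y')$, i.e. $R(x,y,p,y')=0$ along every chain. Fix any $(x_0,y_0,p_0,q_0)$ with $q_0\neq p_0$ and $(x_0,y_0,p_0)$ in the domain of $f$. Since the right-hand sides of \eqref{eq:yp1}--\eqref{eq:yp2} are smooth on the open set $\Delta=y'-p\neq0$, the Picard--Lindel\"of theorem produces a solution with $y(x_0)=y_0$, $p(x_0)=p_0$, $y'(x_0)=q_0$ (and, say, $p'(x_0)=0$); equivalently, this is the coordinate form of the already-established fact that a chain passes through each point of $M$ in each direction transverse to $D$. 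Evaluating the hypothesis at $x_0$ gives $R(x_0,y_0,p_0,q_0)=0$. Letting the data range over the dense open set where $q_0\neq p_0$ and using smoothness of $R$ yields $R\equiv0$, hence $f_{pppp}\equiv0$ and $f$ is at most cubic in $p$.

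The only non-formal point---and so the main obstacle---is justifying that chains sweep out all admissible $1$-jet data $(x_0,y_0,p_0,q_0)$ with $q_0\neq p_0$; this is handled by the ODE existence theorem applied to \eqref{eq:yp1}--\eqref{eq:yp2} off its singular locus $\Delta=0$ (which corresponds precisely to tangency with the contact distribution), or equivalently by part~(2) of the Proposition above. Note that \eqref{eq:yp2} enters only through this existence statement; all the computational content of the lemma is carried by the first chain equation \eqref{eq:yp1}.
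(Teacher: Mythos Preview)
Your proof is correct and follows essentially the same approach as the paper: both recognize the right-hand side of \eqref{eq:yp1} as the cubic Taylor polynomial of $q\mapsto f(x,y,q)$ about $q=p$, reducing the statement to whether the Taylor remainder vanishes. Your converse direction is in fact more carefully argued than the paper's---you explicitly invoke Picard--Lindel\"of (equivalently, part~(2) of the Proposition) to realize arbitrary initial data $(x_0,y_0,p_0,q_0)$ with $q_0\neq p_0$, whereas the paper simply asserts the existence of a neighborhood on which \eqref{eq:tay} fails and concludes.
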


We proceed with the proof of the lemma. Assume $f(x,y,p)$ is polynomial  in $p$ of degree $\leq 3$. Then $f(x,y,y')$ is given by the  cubic Taylor polynomial of $f$ with respect to $p$:
\be\label{eq:tay}
    f(x,y,y')=f+f_p(y'-p)+\frac{1}{2}f_{pp}(y'-p)^2+ \frac{1}{6} f_{ppp} (y' - p)^3,
\ee
where $f$ and its derivatives on the right hand side are evaluated at $(x,y,p)$. Now the right hand side of the last equation, evaluated at $y=y(x), y'=y'(x), p=p(x)$,  is the right hand side of the chain equation \eqref{eq:yp1}. It follows that if $(y(x), p(x))$ satisfy equations  \eqref{eq:yp1}--\eqref{eq:yp2} then $y(x)$ satisfies $y''(x)=f(x,y(x),y'(x)),$ as needed.

Conversely,  suppose $f(x,y,p)$ is not polynomial in $p$ of degree $\leq 3$. Then there is a neighborhood $U\subset \R^3$ such that for all $(x,y,p), (x,y, y')\in U$, with $y'\neq p$, equation \eqref{eq:tay} does not hold. It follows that the chains in this neighborhood do not project to solutions of $y''=f(x,y,y')$. 
\end{proof}

\begin{remark}
One should also be able to prove Theorem \ref{thm:proj_chains} using the general machinery of parabolic geometry concerning correspondence spaces \cite[Section 4.4]{CS} and canonical curves \cite[Section 5.3]{CS} in a way that may be readily generalizable to other types of parabolic geometries and families of curves. Such a proof would take us too far afield here, so we will take up this approach elsewhere.
\end{remark}

\section{Examples of path geometries and their chains}
In this section we illustrate the general theory of the previous section by determining explicitly the chains of  some  {\em homogeneous} path geometries. First, the flat path geometry on $\RP^2$, admitting an 8-dimensional symmetry group, then 3 of the items  of Tresse's classification \cite{T} of `submaximal' path geometries, i.e. those  admitting a 3-dimensional  group of symmetries. In each case we exploit the symmetry to reduce the chain equations to determining null geodesics on a group with respect to a left-invariant sub-Riemannian metric. Then a well-known procedure reduces the equation to the Euler equations on the dual of the Lie algebra of the group and are integrable.
\subsection{ The flat path geometry}
Here $M\subset \RP^2\times(\RP^2)^*$ is the set of incident point-line pairs $(q,\ell)$ (equivalently,  the manifold $F_{1,2}$ of full flags in  $\R^3$) and  $L_1, L_2\subset TM$ are  the tangents to the  fibers of the projections onto  the first and second factor  (respectively).
\begin{proposition}\label{prop:flat}
For each non-incident pair $(q_*, \ell_*)\in  \RP^2\times(\RP^2)^*\setminus M$ consider the set  of incident pairs $(q, \ell)\in M$ such that $q\in\ell_*, q_*\in\ell.$ This is a chain in $M$ and all chains in $M$ are of this form. See Figure \ref{fig:flat_chains}.
\end{proposition}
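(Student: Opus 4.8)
The statement is local along each chain plus a single global identification, so the plan is to compute everything in the standard affine model of the flat path geometry and then propagate by the symmetry group $\PSL_3(\R)$. Concretely, I would realize an affine chart of $\RP^2$ as the $xy$-plane $\Sigma$ and the corresponding open subset of $M$ as the coordinate model of the subsection on $y''=f(x,y,y')$, i.e. $J^1(\R,\R)$ with coordinates $(x,y,p)$, a point of which encodes the flag ``the point $(x,y)$ together with the line through it of slope $p$''. Under this identification the flat path geometry is the one with paths $y''=0$, and $L_1=\Span(\partial_p)$, $L_2=\Span(\partial_x+p\,\partial_y)$, agreeing with $L_1,L_2$ as tangents to the fibers of $M\to\RP^2$ and $M\to(\RP^2)^*$.

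First I would specialize the chain system \eqref{eq:yp1}--\eqref{eq:yp2} of Proposition \ref{prop:yp} to $f\equiv 0$. All derivatives of $f$ vanish, so the system collapses to
\[
  y''=0, \qquad p''=-\frac{2(p')^2}{y'-p}.
\]
(The first equation is also forced by Theorem \ref{thm:proj_chains}, since a flat geometry is projective.) Hence $y=ax+b$; writing $u:=a-p=-(y'-p)$ turns the second equation into $u''u=2(u')^2$, which integrates elementarily to $u'=\kappa u^2$, so $u=1/(\kappa x+c)$ for constants $\kappa,c$, together with the degenerate solutions $u\equiv u_0$. Thus every chain meeting this chart is, up to reparametrization, one of the curves
\[
  x\ \longmapsto\ \Bigl(x,\ ax+b,\ a-\tfrac{1}{\kappa x+c}\Bigr)
  \qquad\text{or}\qquad
  x\ \longmapsto\ (x,\ ax+b,\ p_0)\ \ (p_0\neq a),
\]
a $4$-parameter family, matching the dimension of the space of all chains. (The division by $y'-p$ is legitimate because a chain is transverse to $D$, i.e. $y'\neq p$ along it; the excluded case $y'\equiv p$ is an integral curve of $L_2$, not a chain.)

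Next I would recognize these curves as the asserted pencils. Along the first curve every point $(x,ax+b)$ lies on the fixed line $\ell_*=\{y=ax+b\}$, and a short computation shows that the line through $(x,ax+b)$ of slope $a-\tfrac{1}{\kappa x+c}$ passes, for all $x$, through the fixed point $q_*=\bigl(-c/\kappa,\ b-ac/\kappa+1/\kappa\bigr)$, which does not lie on $\ell_*$. Along the degenerate curve the points again fill $\ell_*=\{y=ax+b\}$ while the lines, all of slope $p_0$, are concurrent at the point at infinity $[1:p_0:0]\notin\ell_*$. Conversely, given a non-incident pair $(q_*,\ell_*)$ with $\ell_*$ meeting the chart, solving these relations for $(a,b,\kappa,c)$ exhibits $C_{(q_*,\ell_*)}$ as exactly one of the curves above; in particular $C_{(q_*,\ell_*)}$ is the smooth circle $\{(q,\overline{qq_*}):q\in\ell_*\}\cong\ell_*\cong\RP^1$, well defined since $q_*\notin\ell_*$ forces $q\neq q_*$, and it is a chain.

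Finally I would globalize using homogeneity. The group $G=\PSL_3(\R)$ acts on $M$ by equivalences of the flat path geometry, hence by conformal isometries of the canonically associated Fefferman metric, hence it permutes chains; and since the incidence conditions defining $C_{(q_*,\ell_*)}$ are $G$-equivariant, $g\cdot C_{(q_*,\ell_*)}=C_{(gq_*,g\ell_*)}$. As $G$ is transitive on $M$, its translates of the chart cover $M$, and as $G$ is transitive on the set of non-incident pairs, the correspondence ``chains $\leftrightarrow$ pencils'' established in the chart transfers to all of $M$: every $C_{(q_*,\ell_*)}$ is a chain and every chain equals some $C_{(q_*,\ell_*)}$. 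The only genuine work I anticipate is the elementary projective bookkeeping of the third step --- locating $q_*$, verifying non-incidence, and confirming that the degenerate family and the chains not visible in a given chart (e.g. those over a ``vertical'' $\ell_*$) are genuinely absorbed by the $G$-action --- together with checking that the integration of the second step loses no solutions. A fully computation-free alternative would be to verify directly that a parametrized pencil curve is a non-vertical null geodesic of the Fefferman metric \eqref{eq:fef22} with $f=0$; this is of comparable length, and I would reach for it only if the ODE reduction proved awkward.
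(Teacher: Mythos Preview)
Your proof is correct, but it follows a genuinely different route from the paper's. The paper proves Proposition~\ref{prop:flat} twice, in each case by passing to a $3$-dimensional subgroup of $\GL_3(\R)$ (the Heisenberg group, then the Euclidean group $\SE$) that acts with an open orbit on $M$, pulling back the Fefferman metric to a \emph{left-invariant} metric on $G\times\R^*$, and integrating the resulting Euler equations $\dot\M=\ad_{A^{-1}\M}^*\M$ on $\g^*$; the explicit solutions are then recognized as the pencil curves. You instead stay in the affine chart $J^1(\R,\R)$, specialize the already-available chain system \eqref{eq:yp1}--\eqref{eq:yp2} to $f\equiv0$, and solve the resulting pair $y''=0$, $p''=-2(p')^2/(y'-p)$ by elementary quadrature, then globalize with the $\PSL_3(\R)$-action. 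Your approach is shorter and more direct for this particular example, leveraging Proposition~\ref{prop:yp} as a black box; the paper's approach is heavier here but sets up the Lie-theoretic template (inertia operator, Euler equations, Casimirs) that it reuses verbatim in the three subsequent, harder examples. One small slip: you write $u:=a-p=-(y'-p)$, but since $y'=a$ one has $u=y'-p$; the sign error is harmless because the equation $u''u=2(u')^2$ and its solutions are insensitive to it.
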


\begin{figure}
\centering
\def\svgwidth{.3\textwidth}
\begingroup%
  \makeatletter%
  \providecommand\color[2][]{%
    \errmessage{(Inkscape) Color is used for the text in Inkscape, but the package 'color.sty' is not loaded}%
    \renewcommand\color[2][]{}%
  }%
  \providecommand\transparent[1]{%
    \errmessage{(Inkscape) Transparency is used (non-zero) for the text in Inkscape, but the package 'transparent.sty' is not loaded}%
    \renewcommand\transparent[1]{}%
  }%
  \providecommand\rotatebox[2]{#2}%
  \newcommand*\fsize{\dimexpr\f@size pt\relax}%
  \newcommand*\lineheight[1]{\fontsize{\fsize}{#1\fsize}\selectfont}%
  \ifx\svgwidth\undefined%
    \setlength{\unitlength}{163.51001358bp}%
    \ifx\svgscale\undefined%
      \relax%
    \else%
      \setlength{\unitlength}{\unitlength * \real{\svgscale}}%
    \fi%
  \else%
    \setlength{\unitlength}{\svgwidth}%
  \fi%
  \global\let\svgwidth\undefined%
  \global\let\svgscale\undefined%
  \makeatother%
  \begin{picture}(1,0.89112681)%
    \lineheight{1}%
    \setlength\tabcolsep{0pt}%
    \put(0,0){\includegraphics[width=\unitlength,page=1]{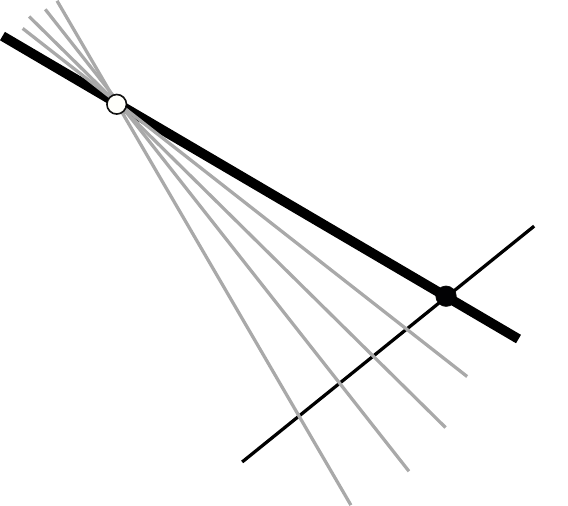}}%
    \put(0.74649663,0.45081735){\color[rgb]{0,0,0}\makebox(0,0)[lt]{\lineheight{0}\smash{\begin{tabular}[t]{l}$q$\end{tabular}}}}%
    \put(0.94210671,0.21983903){\color[rgb]{0,0,0}\makebox(0,0)[lt]{\lineheight{0}\smash{\begin{tabular}[t]{l}$\ell$\end{tabular}}}}%
    \put(0.2415674,0.74983574){\color[rgb]{0,0,0}\makebox(0,0)[lt]{\lineheight{0}\smash{\begin{tabular}[t]{l}$q_*$\end{tabular}}}}%
    \put(0.96001188,0.44007388){\color[rgb]{0,0,0}\makebox(0,0)[lt]{\lineheight{0}\smash{\begin{tabular}[t]{l}$\ell_*$\end{tabular}}}}%
    \put(0,0){\includegraphics[width=\unitlength,page=2]{chains_flat.pdf}}%
  \end{picture}%
\endgroup%

\caption{Chains of the flat path geometry (straight lines). }\label{fig:flat_chains}
\end{figure}

To prove it, note that $\GL_3(\R)$ acts naturally on $(M, L_1, L_2)$. We look for a 3-dimensional subgroup $G\subset \GL_3(\R)$ acting on $M$ with an open orbit. Fixing a point $m_0=(q_0, \ell_0)\in M$ yields we get two left-invariant line fields  $L_1, L_2\subset TG,$ given by their value $(L_1)_\id, (L_2)_\id\subset \g$, the Lie algebras of the stabilizers of $q_0, \ell_0$ (resp.). It is then easy to find left-invariant  adapted coframes on $G$ describing $L_1, L_2$  and the associated Fefferman metric. We consider two such $G$: the Heisenberg group and the Euclidean group.

\subsubsection{First proof of Proposition \ref{prop:flat}: via the Heisenberg group}
 Let   $\mathrm H$ be the set  of matrices of the form

\be\label{eq:H}
\left(\begin{array}{ccc}1&z&y\\  0&1&x\\ 0&0&1\end{array}\right),
\quad x,y,z\in\R.
\ee
Its  Lie algebra  $\h$  consists of matrices of the form
\be\label{eq:Ee}
\left(\begin{array}{ccc}0&x^1&x^3\\  0&0&x^2\\ 0&0&0\end{array}\right)
, \quad x^i\in\R.
\ee
Let $\theta^i$ be the  left-invariant 1-form on $\mathrm H$ whose value at $\id\in \mathrm H$ is
$x^i$, $i=1,2,3$.  Then
$$\Theta:=
\left(\begin{array}{ccc}0&\theta^1&\theta^3\\  0&0&\theta^2\\ 0&0&0\end{array}\right)
$$
is the left-invariant Maurer--Cartan form on $\mathrm H$, satisfying $\d\Theta=-\Theta\wedge\Theta$,  from which we get
\be\label{eq:stre}
\d  \theta^1=\d \theta^2=0, \   \d \theta^3=-\theta^1\wedge \theta^2.
\ee

Identify  $\R^2$ with an  affine plane  in $\R^3$, $(x,y)\mapsto (y,x,1)$. It is $\mathrm H$-invariant, and the resulting affine action on  $\R^2$ is $(x_0,y_0)\mapsto (x_0+x, y_0+y+zx_0).$ This action is transitive  on $\R^2$ and transitive and free on the set   $M$ of incident pairs $(q,\ell)$, where $q\in\R^2$ and $\ell\subset \R^2$ is a non-vertical line  through $q$. There are $\mathrm H$-invariant line fields $L_1, L_2\subset TM$, where $L_1$ (resp. $L_2$) is tangent to the fibers of the projection $(q,\ell)\mapsto q$ (resp. $(q,\ell)\mapsto \ell$).

Let $q_0=(0,0), \ell_0=\{y=0\}$ (the real axis). Let $(X_1, X_2, X_3)$ the (left-invariant) frame on $\mathrm H$ dual to $(\theta^1, \theta^2, \theta^3)$.
Then the Lie algebras $(L_1)_\id, (L_2)_\id$ of the stabilizers of $q_0, \ell_0$ are spanned by
$X_1, X_2$ (resp.). Thus,
$$
D=L_1\oplus L_2=(\theta^3)^\perp,\quad
L_1=\{\theta^2,\theta^3\}^\perp,\quad
 L_2=\{\theta^1,\theta^3\}^\perp,$$
 with an adapted coframe
 $$\eta^1:=\theta^1,\quad \eta^2:=\theta^2,\quad \eta^3:=-\theta^3.$$

Solving the structure equations \eqref{eq:str3}--\eqref{eq:str4}, we get $\alpha=\theta^4,\ K=0,$ where $\theta^4=(\d s)/ s$ (the Maurer--Cartan form on $\R^*$), which gives, using equations \eqref{eq:feff2}--\eqref{eq:feff3},
$\sigma=-(2/3)\theta^4$ and
\be\label{eq:hfef22}
\grm=\theta^1 \cdot \theta^2+\frac{2}{3} \theta^3\cdot  \theta^4.
\ee

\begin{lemma}Null geodesics of \eqref{eq:hfef22}, projected to $\mathrm H$ and passing through $\id\in \mathrm H$ at $t=0$,  are of the form
$$ x=b(1-e^{-ct}),\  y=-ab(1-e^{-ct}), \  z=a(e^{ct}-1), \qquad a,b,c\in \R.
$$
They correspond to  chains $(q_t, \ell_t)\in M$, passing through $(q_0, \ell_0)$ at $t=0,$ where $q_t$ moves along the  line $\ell_*$ through   $q_0$  of  slope $-a$, and $\ell_t$ is a line through $q_t$ and $q_*=(b,0)$.
\end{lemma}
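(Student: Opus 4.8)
\emph{Proof proposal.} The metric \eqref{eq:hfef22} is a left-invariant metric on the $4$-dimensional Lie group $B\cong\mathrm H\times\R^*$, assembled from the Maurer--Cartan forms $\theta^1,\theta^2,\theta^3$ of $\mathrm H$ and $\theta^4=(\d s)/s$ of $\R^*$; its Lie algebra is $\h\oplus\R$, and by \eqref{eq:stre} the only nonzero bracket among the left-invariant frame $X_1,\dots,X_4$ dual to $\theta^1,\dots,\theta^4$ is $[X_1,X_2]=X_3$, with $X_4$ central. The plan is to use the standard reduction of geodesics of a left-invariant metric to the Euler--Arnold equations $\dot m=\mathrm{ad}^*_\xi m$ on $\g^*$, where $\xi=g^{-1}\dot g$ is the body velocity of a geodesic and $m\in\g^*$ its image under the metric. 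Since \eqref{eq:hfef22} pairs $X_1$ with $X_2$ and $X_3$ with $X_4$ and nothing else, the momenta are, up to the explicit numerical factors of \eqref{eq:hfef22}, $m_1\propto\xi^2$, $m_2\propto\xi^1$, $m_3\propto\xi^4$, $m_4\propto\xi^3$.

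Feeding the single bracket into the Euler equations makes the system essentially trivial: $m_3$ and $m_4$ are conserved (they pair with the abelian and central directions), while $\dot m_1$ and $\dot m_2$ are numerical multiples of $m_3 m_1$ and $m_3 m_2$. Hence $\xi^3,\xi^4$ are constant and $\xi^1,\xi^2$ are constant multiples of $e^{ct}$ and $e^{-ct}$ for a constant $c$ proportional to $m_3$. The products $m_1 m_2$ and $m_3 m_4$ are then separately constant, consistent with conservation of $\grm(\xi,\xi)$, and the \emph{nullity} condition is precisely $\grm(\xi,\xi)=\xi^1\xi^2+\tfrac23\xi^3\xi^4=0$, which pins the constant $\xi^3$ in terms of the (constant) product $\xi^1\xi^2$.

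To recover the projected curve I would integrate $g^{-1}\d g=\xi(t)$ in the matrix coordinates of \eqref{eq:H}: a direct computation of $g^{-1}\d g$ gives $\theta^1=\d z$, $\theta^2=\d x$, $\theta^3=\d y-z\,\d x$, so along the geodesic $\dot z=\xi^1\propto e^{ct}$, $\dot x=\xi^2\propto e^{-ct}$, and $\dot y=z\dot x+\xi^3$. Integrating from $g(0)=\id$ yields $z=a(e^{ct}-1)$ and $x=b(1-e^{-ct})$ after naming the constants, and integrating $\dot y=z\dot x+\xi^3$ produces a term proportional to $1-e^{-ct}$ together with a term linear in $t$; this linear term cancels against $\xi^3 t$ exactly when the nullity constraint of the previous paragraph holds, leaving $y=-ab(1-e^{-ct})$. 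I expect this cancellation --- together with keeping the Euler-equation sign conventions straight so that $a,b,c$ come out exactly as in the statement --- to be the only point requiring real care, and it is the one place where ``null'' rather than general geodesics are used essentially. The degenerate case $c=0$ gives constant or vertical-fiber curves, not genuine non-vertical chains, and can be set aside; rescaling a nonzero $c$ merely reparametrizes $t$, so $(a,b)$ are the two honest parameters of the chains through $\id$.

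For the geometric identification, recall that the free transitive $\mathrm H$-action identifies $M$ with $\mathrm H$ so that the coordinates $(x,y,z)$ encode the point $q=(x,y)\in\R^2$ together with the slope $z$ of the line $\ell$ through $q$. Then $y(t)=-a\,x(t)$ shows that $q_t$ sweeps out the line $\ell_*=\{y=-ax\}$ through $q_0$ of slope $-a$, and a one-line substitution verifies that the line through $q_t$ of slope $z(t)$ passes through the fixed point $q_*=(b,0)$ for every $t$. This exhibits the projected null geodesic as the family $\{(q,\ell)\in M:\ q\in\ell_*,\ q_*\in\ell\}$ of Proposition \ref{prop:flat} attached to the non-incident pair $\big((b,0),\{y=-ax\}\big)$; combined with the homogeneity of $M$ under $\mathrm H$ this also completes the proof of Proposition \ref{prop:flat}, modulo the usual passage to $\RP^2$ needed to include pencils through a point at infinity.
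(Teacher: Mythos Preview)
Your proof is correct and follows essentially the same route as the paper: reduce the left-invariant metric \eqref{eq:hfef22} on $\mathrm H\times\R^*$ to Euler equations on the Lie algebra, observe that $\xi^3,\xi^4$ are constant while $\xi^1,\xi^2$ evolve exponentially with rate fixed by $\xi^4$, impose nullity, and integrate $g^{-1}\dot g$ in the coordinates $(x,y,z)$ using $\theta^1=\d z$, $\theta^2=\d x$, $\theta^3=\d y-z\,\d x$. The paper does the same computation, only phrasing the Euler equations for the body velocity $X$ directly (equation \eqref{eq:eulerH}) rather than for the momentum $m=AX$, and carrying out the integration with an explicit time reparametrization and constant renaming in place of your ``after naming the constants''. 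One small imprecision worth tightening: nullity alone fixes only the product $\xi^3\xi^4$, not $\xi^3$; it is the Euler relation $c=\tfrac{2}{3}\xi^4$ (your ``$c$ proportional to $m_3$'') together with nullity that forces $\xi^3=-abc$ and hence kills the linear-in-$t$ term in $y$.
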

\begin{proof} The metric \eqref{eq:hfef22}  is a left-invariant metric on $G=\mathrm{H}\times\R^*,$ with an inertia operator $A:\g\to\g^*$
$$A={1\over 6}\left(
\begin{array}{cccc}
0& 3& 0 & 0 \\
 3 & 0 & 0 & 0 \\
0 & 0 & 0 & 2 \\
0& 0 & 2 & 0 \\
\end{array}
\right).
$$
The geodesic flow  on $T^*G$ projects via left translation to the Euler equations on $\g^*$,
$\dot \M=\mathrm{ad}_{A^{-1}\M}^*\M$, where $\mathrm{ad}_X^*=(\mathrm{ad}_X)^t\in\End(\g^*),$ $X\in\g$, $P\in\g^*$  and $\ad_XY=[X,Y].$ These are the Hamiltonian equations $\dot \M=\{H,\M\}$ with respect to  the standard Lie-Poisson structure on $\g^*$, where  $H={1\over 2}(\M,A^{-1}\M).$ See \cite[page 66]{AG}. Equivalently, $\dot X=A^{-1}\mathrm{ad}_X^*AX$.
To write these down explicitly with respect to  our bases, we first represent $X\in\g$ and $\mathrm{ad}_X^*\in\End(\g^*)$ by  $4\times 4$ matrices
$$X=\left(
\begin{array}{cccc}
0& x^1 & x^3 & 0 \\
0 & 0 & x^ 2& 0 \\
 0 & 0 & 0 & 0 \\
0& 0 & 0 & x^4 \\
\end{array}
\right), \qquad  \mathrm{ad}_X^*=(\mathrm{ad}_X)^t=
\left(
\begin{array}{cccc}
 0 & 0 & -x^2 & 0 \\
 0 & 0 &x^1 & 0 \\
 0 & 0 & 0 & 0 \\
 0 & 0 & 0 & 0 \\
\end{array}
\right),
$$
so $\dot X=A^{-1}\mathrm{ad}_X^*AX$ 
becomes
\be\label{eq:eulerH}
\dot x^1={2 \over 3}x^1 x^4,\quad
\dot x^2= -{2 \over 3}x^2 x^4,\quad
\dot x^3=\dot x^4=0.
\ee
 The general solution, with $H=(AX,X)/2=
x^1x^2/2+x^3 x^4/3=0$ (we are interested in the zero level set because we are computing the null geodesics), is
\be\label{eq:esol}
x^1=ae^{ct},\quad x^2=be^{-ct},\quad x^3=-{ab\over c},\quad x^4={3c\over 2},
\ee
where $a,b,c\in\R, $ $c\neq 0$. (In addition to these solutions there are some fixed points, which we now ignore).

Now let $g(t)\in H\times\R^*$ be a null geodesic, with
$$g(t)=  \left(\begin{array}{cccc}1&z&y&0\\  0&1&x&0\\
0&0&1&0\\ 0&0&0&s\end{array}\right),
\quad x,y,z,s\in\R, \ s\neq 0.$$
Then $X=g^{-1}\dot g\in\g$ is given by \eqref{eq:esol}.
Explicitly,
\be
\dot x=x^2=b\,e^{-ct}, \quad
\dot y-z\dot x=x^3=-{ab\over c}, \quad
\dot z=x^1=a\,e^{ct}
\ee
(we do not  need the  $s$ equation). Change the time variable to $\tau=ct$,  denoting derivative with respect to  $\tau$ by $(\ )'$ and renaming  the constants, $a\mapsto a/c, b\mapsto b/c$, we get
\be\label{eq:sys}x'=be^{-\tau}, \quad y'-zx'=-ab,\quad  z'=ae^{\tau}.
\ee
Consider chains through $\id\in \mathrm H$,  i.e. $x_0=y_0=z_0=0.$
Then $z=a(e^{\tau}-1),$ hence $y'=zx'-ab=-ab\,e^{-\tau}.$    The solution of \eqref{eq:sys} is then
$$x=b(1-e^{-\tau}),\quad y=-ab(1-e^{-\tau}), \quad z=a(e^{\tau}-1).
$$
Thus $(x,y)$ traces a line of slope $-a$ through the origin, and each  line of slope $z$ through $(x,y)$ passes through $(b,0).$
\end{proof}

\subsubsection{Second proof of Proposition \ref{prop:flat}: via  the Euclidean group}\label{sect:euc}

Here $M= \P(T\R^2)=\R^2\times \P(\R^2)$ is the set of pairs $(q,\ell)$ with $\ell$ a line through $q$,  $L_1\subset TM$ is tangent to the fibers of the projection onto the first factor and similarly for $L_2$. The group  $\SE$ of orientation-preserving isometries of $\R^2$ acts transitively on $M$, with stabilizer $\Z_2$ (reflection about a point), preserving $L_1, L_2$. Fixing a point $(q_0, \ell_0)\in M$ identifies $M$ with $\SE/\Z_2$, and hence equips $\SE$ with left-invariant line fields $L_1, L_2$ given by a pair of 1-dimensional subspaces $(L_1)_\id, (L_2)_\id$, the Lie algebras of the stabilizers of $q_0, \ell_0$ (resp.).

Identify  $\R^2=\C$ with the affine plane $\z=1$  in $\C^2$, $\z\mapsto (\z,1)$; then $\SE$ is identified with the subgroup of $\GL_2(\C)$ consisting of matrices of the   form
\be\label{eq:E} \left(\begin{array}{ccc}e^{i\theta}&\z\\  0&1\end{array}\right),
\quad \z\in\C,\ \theta\in\R.
\ee
Its Lie algebra  $\se$  consists of matrices of the form
\be\label{eq:Ee}\left(\begin{array}{cc}
ix^1&x^2+ix^3\\ 0&0\end{array}\right), \quad x^i\in\R.
\ee
Let $\theta^j$ be the  left-invariant 1-form on $\SE$ whose value at $\id$ is
$x^j$, $j=1,2,3$.  Then
$$\Theta:=\left(\begin{array}{cc}i \theta^1& \theta^2+i \theta^3\\ 0&0\end{array}\right)$$
is the left-invariant Maurer--Cartan form on $\SE$, satisfying $\d\Theta=-\Theta\wedge\Theta$,  from which we get
\be\label{eq:stre}
\d  \theta^1=0, \ \d \theta^2=\theta^1\wedge \theta^3, \   \d \theta^3=-\theta^1\wedge \theta^2.
\ee
Let $X_1, X_2, X_3$ the left-invariant vector fields on $\SE$ dual to $\theta^1, \theta^2, \theta^3$. Let $q_0=0, \ell_0=\R$ (the real axis).
Then the Lie algebras of the stabilizers of $q_0, \ell_0$ are spanned by
$X_1, X_2$ (resp.). Thus
$$
D=L_1\oplus L_2=(\theta^3)^\perp,\quad
L_1=\{\theta^2,\theta^3\}^\perp,\quad
L_2=\{\theta^1,\theta^3\}^\perp,$$
 with an adapted coframe
 $$\eta^1:=\theta^1,\quad \eta^2:=\theta^2,\quad \eta^3:=-\theta^3.$$

Solving the structure equations \eqref{eq:str3}--\eqref{eq:str4}, we get $\alpha=\theta^4,\ K=0,$ where $\theta^4=(\d s)/ s$ (the MC form on $\R^*$), which gives, using equations \eqref{eq:feff2}--\eqref{eq:feff3},
$\sigma=-(2/3)\theta^4$ and
\be\label{eq:fef2}\grm=\theta^1 \cdot \theta^2+\frac{2}{3} \theta^3 \cdot \theta^4.
\ee
This is a left-invariant metric on $G=\SE\times\R^*,$ with an inertia operator $A:\g\to\g^*$
$$A={1\over 6}\left(
\begin{array}{cccc}
0& 3& 0 & 0 \\
 3 & 0 & 0 & 0 \\
0 & 0 & 0 & 2 \\
0& 0 & 2 & 0 \\
\end{array}
\right).
$$
The geodesic flow  on $T^*G$ projects via left translation to the Euler equations on $\g^*$,
$\dot \M=\mathrm{ad}_{A^{-1}\M}^*\M$, where $\mathrm{ad}_X^*=-(\mathrm{ad}_X)^t\in\End(\g^*).$ These are the Hamiltonian equations $\dot \M=\{H,\M\}$ with respect to  the standard Lie-Poisson structure on $\g^*$, where  $H={1\over 2}(\M,A^{-1}\M).$ See \cite[p. 66]{AG}.
To write these down explicitly with respect to  our bases, we first represent $X\in\g$ and $\mathrm{ad}_X^*\in\End(\g^*)$ by  $4\times 4$ matrices
$$X=\left(
\begin{array}{cccc}
0& -x^1 & x^2 & 0 \\
x^1 & 0 & x^ 3& 0 \\
 0 & 0 & 0 & 0 \\
0& 0 & 0 & x^4 \\
\end{array}
\right), \qquad  \mathrm{ad}_X^*=(\mathrm{ad}_X)^t=
\left(
\begin{array}{cccc}
 0 & x^3 & -x^2 & 0 \\
 0 & 0 & x^1 & 0 \\
 0 & -x^1 & 0 & 0 \\
 0 & 0 & 0 & 0 \\
\end{array}
\right),
$$
so $\dot \M=\mathrm{ad}_{A^{-1}\M}^*\M$ 
becomes
\be\label{eq:euler}
\dot \M_1=-2 \M_1 \M_3 +3 \M_2 \M_4, \quad
\dot \M_2= 2 \M_2 \M_3,\quad
\dot \M_3= -2 \M_2^2,\quad
\dot \M_4=0.
\ee
with constants of motion (in addition to $\M_4$),
\begin{align*}
H&={1\over 2}(\M,A^{-1}\M)=
2\M_1 \M_2+3 \M_3 \M_4=0
,\qquad
k=(\M_2)^2+(\M_3)^2.
\end{align*}
Let us use polar coordinates in the $\M_2\M_3$-plane:
$$\M_2=r\cos\phi,\quad \M_3=r\sin \phi.$$
Then $$\dot\phi=-2r\cos\phi,\quad \M_1=c\tan\phi, \quad \M_4=-2c/3, \quad c=const., \quad r=const.  $$
Now let $g(t)\in \SE\times\R^*$ be a null geodesic, with
$$g(t)=  \left(\begin{array}{ccc}e^{i\theta}&\z&0\\  0&1&0\\
0&0&s\end{array}\right),
\quad \z\in\C,\ \theta,s\in\R^*.$$
Let $X=g^{-1}\dot g\in\g$. Then $\M=AX$ satisfies Euler equations \eqref{eq:euler}. Explicitly,
$$\dot \theta= x^1=2\M_2=-\dot\phi,\quad
\dot \z= e^{i\theta} (x^2+ix^3)=e^{i\theta}(2\M_1+i3\M_4)=2ce^{i\theta}(\tan\phi-i).$$
(The $s$ equation is omitted; it will not be used). Assume, without loss of generality, that $g(0)=\id,$ i.e. $\theta(0)=0$ and $\z(0)=0$, so
$\theta=\phi_0-\phi$. We reparametrize $g(t)$ by $\phi$, denote derivative with respect to  $\phi$ by $(\ )'$, and get  $\z'=i(c/r)e^{i\phi_0}\dot \z/\dot\phi=i(c/r)e^{i\phi_0}\sec^2\phi.$ Integrating yields $\z=i(c/r)e^{i\phi_0}\tan\phi.$ Now we rotate the chain by $-\phi_0$, reflect about the $x$-axis and rename $c$, so that
\be\label{eq:flat_chains}
\z=ic\tan\phi, \quad \theta=\phi, \quad c\in\R.
  \ee
This corresponds to a chain $(q_\phi,\ell_\phi)$, where $q_\phi$  moves along $\ell^*=$the $y$ axis and $\ell_\phi$ is the line connecting $q_*=(-c,0)$ with $q_\phi$.

  \subsection{Circles of fixed radius }

Here $M\subset \C\times\C$ is the set of pairs of points $(p,q)$ with $|p-q|=1$,  $L_1, L_2\subset TM$ are tangent to the fibers of the projection onto the first (resp. second) factor. The first projection maps the fibers of the second projection to the set of plane circles of radius 1. The group  $\SE$ of orientation preserving isometries of $\C=\R^2$ acts transitively and freely on $M$, preserving $L_1, L_2$. We use the same notation for this group as in Section \ref{sect:euc}. Let $p_0=0, q_0=1.$
Then the Lie algebras $(L_1)_\id, (L_2)_\id$ of the stabilizers of these points are spanned by
$X_1, X_1-X_3$ (resp.). Thus
$$
D=L_1\oplus L_2=(\theta^2)^\perp,\quad
L_1=\{\theta^2,\theta^3\}^\perp,\quad
L_2=\{\theta^2,\theta^1+\theta^3\}^\perp,$$
 with an adapted coframe
 $$\eta^1:=\theta^3,\quad \eta^2:=\theta^1+\theta^3,\quad \eta^3:=-\theta^2.$$

Solving the structure equations \eqref{eq:str3}--\eqref{eq:str4}, we get $\alpha=\theta^2+\theta^4,\ K=-1,$ where $\theta^4=(\d s)/ s$ (the Maurer--Cartan  form on $\R^*$), which gives, using equations \eqref{eq:feff2}--\eqref{eq:feff3},
$\sigma=-\theta^2/2-2\theta^4/3$ and
\be\label{eq:fef2}\grm={1\over 2}(\theta^2)^2+(\theta^3)^2+\theta^1\cdot  \theta^3+\frac{2}{3} \theta^2 \cdot \theta^4.
\ee
This is a left-invariant metric on $G=\SE\times\R^*,$ with an inertia operator $A:\g\to\g^*$
$$A={1\over 6}\left(
\begin{array}{cccc}
0& 0 & 3 & 0 \\
 0 & 3 & 0 & 2 \\
 3 & 0 & 6 & 0 \\
0& 2 & 0 & 0 \\
\end{array}
\right).
$$
The geodesic flow  on $T^*G$ projects via left translation to the Euler equations on $\g^*$,
$\dot \M=\mathrm{ad}_{A^{-1}\M}^*\M$.
To write these down explicitly with respect to  our bases, we first represent $X\in\g$ and $\mathrm{ad}_X^*\in\End(\g^*)$ by  $4\times 4$ matrices
$$X=\left(
\begin{array}{cccc}
0& -x^1 & x^2 & 0 \\
x^1 & 0 & x^ 3& 0 \\
 0 & 0 & 0 & 0 \\
0& 0 & 0 & x^4 \\
\end{array}
\right), \qquad  \mathrm{ad}_X^*=(\mathrm{ad}_X)^t=
\left(
\begin{array}{cccc}
 0 & x^3 & -x^2 & 0 \\
 0 & 0 & x^1 & 0 \\
 0 & -x^1 & 0 & 0 \\
 0 & 0 & 0 & 0 \\
\end{array}
\right),
$$
so $\dot \M=\mathrm{ad}_{A^{-1}\M}^*\M$  
 become
\begin{align}\label{eq:eulerc}
\begin{split}
\dot \M_1&= 2 \M_1 \M_2 - 3 \M_3 \M_4,\quad
\dot \M_2=  2 \M_3(\M_3-2\M_1) ,\\
\dot \M_3&= -2\M_2 (\M_3-2\M_1),\quad
\dot \M_4=0,
\end{split}
\end{align}
with constants of motion (in addition to $\M_4$),
\begin{align*}
H   &= {1\over 2}(\M,A^{-1}\M)
     = -2 \M_1^2 + 2 \M_1 \M_3 + 3 \M_2 \M_4 - \frac{9}{4} M_4^2 \\
r^2 &= (\M_2)^2+(\M_3)^2.
\end{align*}
We make the following change of variables:
\be\label{eq:cv}
y=4 \M_1- 2 \M_3,\quad \M_2=r\cos\phi,\quad \M_3=-r\sin \phi, \quad \M_4=c/3.
\ee
Then \eqref{eq:eulerc} reduces to
\be\label{eq:y1}
\dot\phi=-y, \quad \dot y=4 r  (c-r \cos\phi )\sin \phi,\qquad c,r\in \R, \ r\geq 0,
\ee
and the nullity condition $H=0$ becomes
\be\label{eq:y}
y^2=8c r \cos\phi+4 r^2 \sin ^2\phi-2c^2.
\ee
\begin{remark}
Equations \eqref{eq:y1} can be written as a single Newton type second order ODE, $\ddot\phi=f(\phi),$
where $f(\phi)=4 r  (r \cos\phi -c)\sin \phi.$ As usual, one writes $f(\phi)=-U'(\phi)$, say
$U(\phi)=-4cr\cos\phi-2r^2\sin^2\phi.$ Then $\dot\phi^2/2+U(\phi)$ is constant along solutions of $\ddot\phi=f(\phi)$ (`conservation of energy'). Equation \eqref{eq:y} fixes the value of this energy.
\end{remark}

Now let $g(t)\in \SE\times\R^*$ be a null geodesic, with
$$g(t)=  \left(\begin{array}{ccc}e^{i\theta}&\z&0\\  0&1&0\\
0&0&s\end{array}\right), \quad \z\in\C,\ \theta,s\in\R^*.$$
Let  $X=g^{-1}\dot g\in\g$. Then $\M=AX$ satisfies \eqref{eq:eulerc}. Explicitly,
$$
\dot \z= e^{i\theta} (x^2+ix^3)=e^{i\theta} (3 \M_4+i2 \M_1),\quad
\dot \theta= x^1=-4 \M_1 + 2 \M_3.
$$
Using the change of variables \eqref{eq:cv}, we get
$$
\dot \z= e^{i\theta}[ c-i(\dot\phi/2+r \sin \phi)],\
\dot \theta= \dot\phi,$$
where $\phi(t)$ satisfies equations \eqref{eq:y1}--\eqref{eq:y}. For a fixed $\phi(t)$ these equations  are invariant under rigid motions   (adding a constant angle to $\theta$, rotating $\z$ by this angle and translating $\z$ by some constant vector). So we can assume, without loss of generality,  that   $\theta=\phi$. Hence
$$
\dot \z= e^{i\theta}[ c-i(\dot \theta/2+r \sin \theta)], \quad
\dot\theta^2=8c r \cos\theta+4 r^2 \sin ^2\theta-2c^2.
$$
Next we use the   scaling invariance, $t\mapsto\lambda t,
\ c\mapsto \lambda c,\ r\mapsto \lambda r,$ to assume $r=1$. We can also use the reflection symmetry $t\mapsto -t, \theta\mapsto\theta+\pi, \z\mapsto-\z,c\mapsto -c$ to assume that $c\geq 0$. Thus every chain, up to a rigid motion and reparametrization, is a solution to
\begin{align}
\dot \z&= e^{i\theta}[ c-i(\dot \theta/2+\sin \theta)], \label{eq:cc}\\
(\dot\theta)^2&=8c\cos\theta+4\sin ^2\theta-2c^2, \label{eq:elip}
\end{align}
with $c\in\R,\ c\geq 0.$
\begin{lemma} Let $F(\theta, c)=8c\cos\theta+4\sin ^2\theta-2c^2$ (the right-hand side of equation \eqref{eq:elip}). Then $F\geq 0$ has a solution if and only if   $|c|\leq 4.$ For every  $c\in[0,4]$ the set  of $\theta\in [-\pi,\pi]$ such that $F(\theta, c)\geq 0$ is an  interval $[-\theta_{max}, \theta_{max}]$,  where $\theta_{max}=\cos ^{-1}\left(c-\sqrt{c^2/2+1}\right)\in [0,\pi]$. For $c\in(0,4)$ every solution $\theta(t)$ of \eqref{eq:elip} oscillates between $-\theta_{max}$ and $\theta_{max}$. If $c=0$ then  $\lim\theta$ is $0$ or $\pi$ as $t\to\pm \infty$. If $c=4$ then $\theta\equiv 0$.
\end{lemma}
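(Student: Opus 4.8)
The plan is to reduce the whole lemma to an elementary analysis of a single quadratic. Substituting $u=\cos\theta\in[-1,1]$ and $\sin^2\theta=1-u^2$ gives
\[
F(\theta,c)=-4u^2+8cu+4-2c^2=-4(u-c)^2+2c^2+4 ,
\]
a downward parabola in $u$ with maximum value $2c^2+4>0$ at $u=c$. Hence $F\ge 0$ is equivalent to $|u-c|\le\rho$, where $\rho:=\sqrt{c^2/2+1}$, i.e.\ to $u\in[c-\rho,\,c+\rho]$. Since $u=\cos\theta$ is confined to $[-1,1]$, everything comes down to intersecting $[c-\rho,c+\rho]$ with $[-1,1]$.

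I would carry out that intersection first for $c\ge 0$. Short computations (each squaring step legitimate since both sides are nonnegative on the relevant range) show: $c+\rho\ge 1$ always, with equality only at $c=0$, so the upper endpoint never binds; $c-\rho\ge -1$ is equivalent to $c(c/2+2)\ge 0$, hence always holds; and $c-\rho\le 1$ is equivalent to $c\le 4$. Therefore for $0\le c\le 4$ one has $c-\rho\in[-1,1]$ and $F(\theta,c)\ge 0\iff\cos\theta\ge c-\rho\iff|\theta|\le\theta_{max}$ on $[-\pi,\pi]$, with $\theta_{max}=\cos^{-1}(c-\rho)=\cos^{-1}\!\big(c-\sqrt{c^2/2+1}\big)\in[0,\pi]$; plugging in the endpoints gives $\theta_{max}=\pi$ at $c=0$ and $\theta_{max}=0$ at $c=4$ (since $4-\sqrt{9}=1$). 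For $c>4$ we get $c-\rho>1$, so no admissible $\theta$. Finally, from the identity $F(\theta,-c)=F(\pi-\theta,c)$ the existence of a zero (more precisely, of a point with $F\ge 0$) for parameter $-c$ is equivalent to that for parameter $c$, which yields the stated criterion $|c|\le 4$.

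For the dynamical claims with $c\in(0,4)$, note $\theta_{max}\in(0,\pi)$, that $F(\cdot,c)$ is even in $\theta$, strictly positive on $(-\theta_{max},\theta_{max})$ and zero at $\pm\theta_{max}$, and that $\partial_\theta F=8\sin\theta\,(\cos\theta-c)$, which at $\theta=\pm\theta_{max}$ equals $\mp 8\sin\theta_{max}\,\rho\neq 0$: the turning points are nondegenerate. Reading \eqref{eq:elip} together with \eqref{eq:y1} as the one-dimensional conservative system $\ddot\theta=\tfrac12\partial_\theta F$ with energy fixed by $\dot\theta^2=F$, the standard turning-point argument (quadrature $t=\int d\theta/\sqrt{F}$, convergent near a simple zero) shows every genuine solution starting inside the well reaches $\theta_{max}$ and $-\theta_{max}$ in finite time and is reflected, hence oscillates periodically between $\pm\theta_{max}$. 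For the degenerate endpoints: when $c=0$, $F=4\sin^2\theta$, so $\dot\theta=\pm 2\sin\theta$, which integrates to $\tan(\theta/2)=Ae^{\pm 2t}$, whence $\theta(t)$ tends monotonically to $0$ at one end of time and to $\pi$ at the other (the equilibria $\theta\equiv 0,\pm\pi$ being the limiting cases); when $c=4$, $F(\theta,4)=-4(\cos\theta-4)^2+36\le 0$ with equality only at $\theta=0$, forcing $\dot\theta\equiv 0$ and $\theta\equiv 0$.

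The only genuinely delicate points are bookkeeping ones. First, one must keep careful track of which of $c\pm\rho$ is the binding constraint and check the validity of each squaring step uniformly across $0\le c\le 4$ (and note the sign symmetry to descend to $c<0$). Second, one should be explicit that ``solution of \eqref{eq:elip}'' is meant in tandem with the second-order system \eqref{eq:y1}: at a simple zero of $F$ the honest dynamical solution bounces rather than sticking, so that the oscillation conclusion (rather than a spurious constant branch) is forced by $\ddot\theta=\tfrac12\partial_\theta F\neq 0$ there.
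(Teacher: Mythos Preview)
Your proof is correct and follows essentially the same approach as the paper: the substitution $u=\cos\theta$ reducing $F\ge 0$ to a quadratic inequality, followed by intersecting the root interval $[c-\rho,\,c+\rho]$ with $[-1,1]$. Your argument is in fact more complete than the paper's, which simply calls the intersection step ``elementary'' and does not address the dynamical claims (oscillation for $c\in(0,4)$, limits for $c=0$, constancy for $c=4$) at all.
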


\begin{proof}We write $F= -4x^2+8cx + 4- 2c^2,$ where  $x=\cos\theta$. The roots of this polynomial  are $x_\pm=c\pm\sqrt{1+c^2/2}$ and $F>0$ in the interval $(x_-, x_+)$. To be able to solve for $\theta$ we need $[x_-, x_+]$ to intersect the interval  $[-1,1]$. It is elementary to show that this occurs if and only if   $|c|\leq 4.$
\end{proof}

\begin{figure}
\centering
\def\svgwidth{1\textwidth}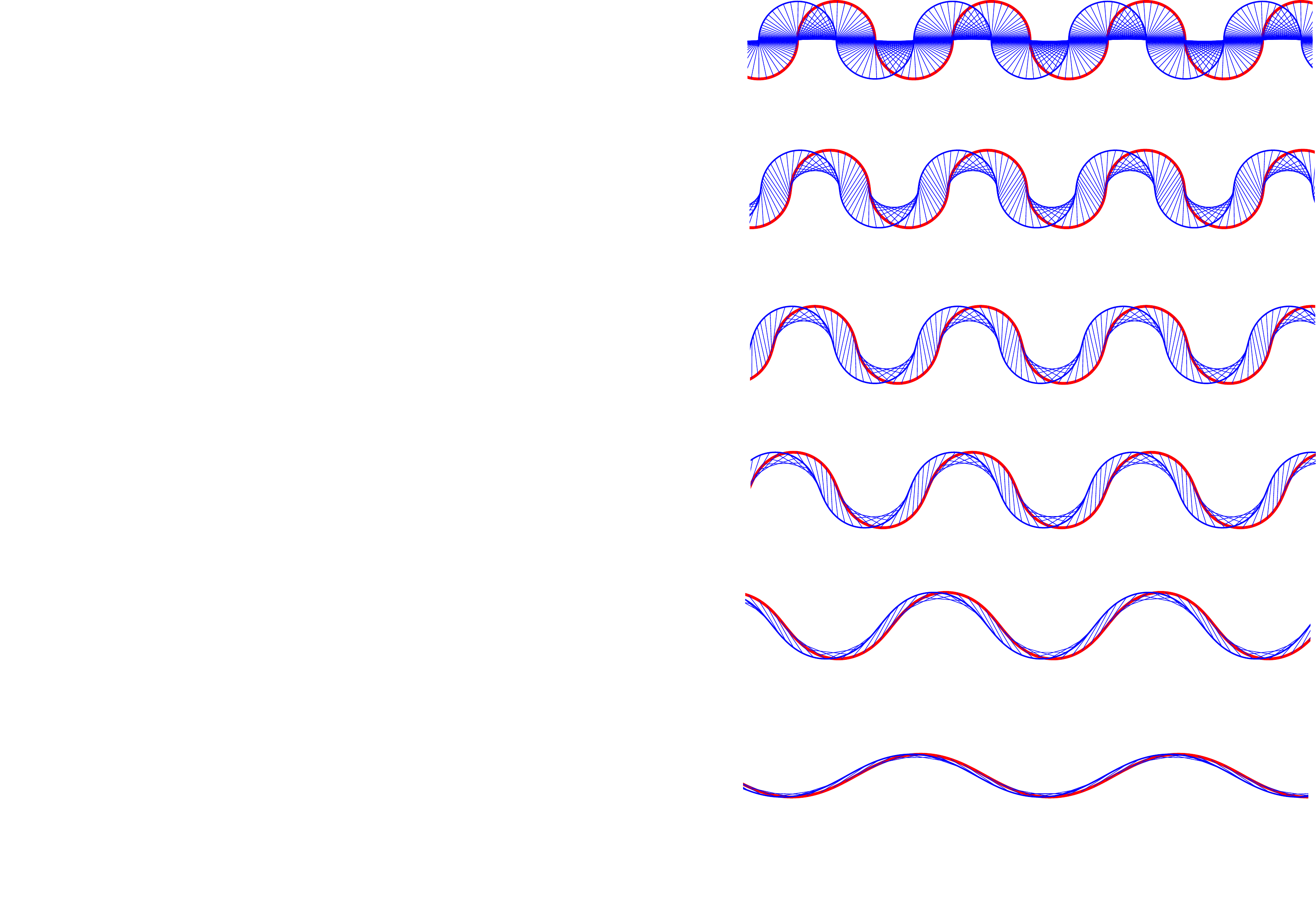
\caption{Chains of circles path geometry (solutions of equations \eqref{eq:cc}--\eqref{eq:elip}).
 Top left: plot of the maximum amplitude of oscillation of $\theta$ as a function of the chain parameter $c\in[0,4].$ Bottom left: phase curves of equation  \eqref{eq:elip} for various  $c$ values. Right: each red curve is the projection of the chain on the Euclidean plane. The blue curve represents the projection of the chain on the dual plane; it is formed by joining the tips of the unit vectors  in the direction $\theta$ at each point $\z$ of the red curve (the thin blue lines). }\label{fig:circle_chains}
\end{figure}

\begin{proposition}
Every chain of the path geometry of circles of radius 1 in the Euclidean plane, up to an affine reparametrization and rigid motion, is given by a unique solution of equations \eqref{eq:cc}--\eqref{eq:elip} with $c\in[0,4)$, $\z(0)=\theta(0)=0$ (for $c=0$ one should take $\theta(0)\neq 0,\pi.$
\end{proposition}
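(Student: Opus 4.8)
The plan is to assemble the reduction already carried out in the lines preceding the statement and then to pin down the range of the parameter and the uniqueness. Recall that, with the adapted coframe built from the left-invariant coframe on $\SE$, every chain of the circles path geometry is the projection to $M\cong\SE$ of a non-vertical null geodesic of the left-invariant Fefferman metric $\grm$ on $G=\SE\times\R^*$; the geodesic flow projects, via left translation, to the Euler equations \eqref{eq:eulerc} on $\g^*$ restricted to the null set $\{H=0\}$; and the substitution \eqref{eq:cv}, together with the reconstruction of $g(t)=(\z(t),\theta(t),s(t))$, turns these into the system \eqref{eq:cc}--\eqref{eq:elip}. The normalizations used along the way---the $\R^*$-invariance of $\grm$ (so that the $\R^*$-coordinate drops out, this being the intrinsic redundancy of $\S D^*\to M$), the dilation $t\mapsto\lambda t$, $c\mapsto\lambda c$, $r\mapsto\lambda r$ setting $r=1$, the reflection $t\mapsto-t$, $\theta\mapsto\theta+\pi$, $\z\mapsto-\z$, $c\mapsto-c$ achieving $c\ge 0$, and the rigid motion imposing $\theta=\phi$---are each an affine reparametrization, a rigid motion (isometry of the plane, reflections included), or the intrinsic $\R^*$-scaling; one further rigid motion and a time translation then bring the data to $\z(0)=\theta(0)=0$. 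This establishes that every chain is of the asserted form.

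Next I would determine the admissible $c$. Solving \eqref{eq:elip} for $\dot\theta$ at $t=0$ requires $F(0,c)=8c-2c^2=2c(4-c)\ge 0$, i.e.\ $0\le c\le 4$, which is the bound recorded in the preceding Lemma. Both endpoints are degenerate. For $c=4$ one has $F(0,4)=0$, hence $\dot\theta(0)=0$; since equation \eqref{eq:elip} is a first integral of the second-order ODE $\ddot\theta=4r(r\cos\theta-c)\sin\theta$, whose solutions are unique, this forces $\theta\equiv 0$, the limiting member, treated apart as in the Lemma, leaving $c\in[0,4)$. For $c=0$ with $\theta(0)\in\{0,\pi\}$ one again gets $F=0$, so $\theta$ is constant and then, by \eqref{eq:cc}, $\z$ is constant; a curve with $\z$ constant is the image of a fibre of $\S D^*\to M$, that is, of a \emph{vertical} null geodesic, hence not a chain---whence the proviso $\theta(0)\ne 0,\pi$ when $c=0$.

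Finally, for uniqueness I would argue in two steps. First, for fixed $c\in[0,4)$ and the normalized initial data the solution is determined: when $c\in(0,4)$, $F(0,c)>0$, so $\theta(0)=0$ is a non-turning point, the second-order form of \eqref{eq:y1} has a unique solution with $\theta(0)=0$ for each of the two values $\dot\theta(0)=\pm\sqrt{2c(4-c)}$ (these being interchanged by an orientation-reversing isometry), and $\z$ is then recovered from \eqref{eq:cc} by a single quadrature; when $c=0$ the non-constant solution is the heteroclinic orbit joining $\theta=0$ to $\theta=\pi$, unique up to a time translation. Second, distinct $c\in[0,4)$ give inequivalent chains: since $\dot\M_4=0$ in \eqref{eq:eulerc}, the parameter $c=3\M_4$ is a constant of motion, hence an invariant of the geodesic; it is unchanged by the symmetry group (left translation leaves the body momentum fixed) and by time translation, while the only reparametrization that rescales it is the dilation already spent to impose $r=1$. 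So once $r=1$, $c\ge 0$ and $\z(0)=\theta(0)=0$ are fixed, $c$ is a genuine invariant of the chain, and $c\mapsto$\,(chain modulo rigid motion and affine reparametrization) is a well-defined bijection onto the set of chains. I expect the principal obstacle to be precisely this last bookkeeping---confirming that the sequence of normalizations is complete, so that no residual symmetry can move $c$---together with disposing cleanly of the degenerate constant solutions at the two ends of the interval.
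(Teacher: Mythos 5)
Your overall route is the paper's own: the proposition there carries no separate proof, being simply a summary of the preceding reduction (the Euler equations \eqref{eq:eulerc}, the substitution \eqref{eq:cv}, the normalizations $\theta=\phi$, $r=1$, $c\ge 0$) together with the lemma on $F(\theta,c)$, and your assembly of those steps---plus the explicit bookkeeping that $c=3\M_4$ is a constant of motion, unaffected by left translations and time translations once the scaling has been spent on $r=1$---is a correct and in fact more careful account of uniqueness than the paper records. Your disposal of the $c=0$ equilibria is also right: there $\dot\z=0$, so the curve is a fibre of $\S D^*\to M$, i.e.\ the projection of a vertical null geodesic, hence not a chain. (One small point: the two solutions with $\dot\theta(0)=\pm\sqrt{2c(4-c)}$ can be identified without invoking an orientation-reversing isometry, since they lie on the same periodic phase orbit and so differ by a time translation followed by a translation of $\z$; this matters if ``rigid motion'' is read as orientation-preserving.)

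The genuine gap is your treatment of the endpoint $c=4$. You discard it as ``degenerate, treated apart as in the Lemma,'' but the Lemma only says that $\theta\equiv 0$ there; it does not say the resulting curve fails to be a chain, and in fact it does not fail. For $c=4$ and $\theta\equiv 0$, equation \eqref{eq:cc} gives $\dot\z=c=4\neq 0$, so---unlike the $c=0$ equilibria---the projection to $M$ is the non-constant curve $(\z,\theta)=(4t,0)$. Writing the point of $M$ as the pair $(p,q)=(\z,\z+e^{i\theta})$, this curve has $\dot p$ parallel to $p-q$, hence transverse to $D$ (whose directions have $\dot p,\dot q$ perpendicular to $p-q$), and it is the projection of a non-vertical null geodesic (the equilibrium $\M_1=\M_3=0$, $\M_2=1$, $\M_4=4/3$ of \eqref{eq:eulerc} on the null level set). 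By the paper's definition it is therefore a chain: a unit circle translating along the line through its center and marked point. So your argument, as it stands, does not establish the statement as written: either this straight-line chain must be accounted for (which would require the closed interval $[0,4]$, or an explicit remark that the degenerate member is being set aside), or you need an argument for excluding it that your $c=0$ reasoning does not supply. Since the paper itself is silent on this point, the defect may lie partly in the statement rather than in your strategy, but a complete proof has to confront it.
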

See Figure \ref{fig:circle_chains}. The  projection of the chains on the Euclidean plane (the curves $\z(t)$) look like inflectional elastica, but they are not (checked numerically).

\subsection*{Further properties/questions  about these chains:}
\begin{enumerate}
\item From the pictures, $\z(t)+e^{i\theta(t)}$ (the red curve) is obtained from $\z(t)$ by translation and parameter shift. Presumably, this comes out of equations \eqref{eq:cc}--\eqref{eq:elip}. Is this a manifestation of the self-duality of this path geometry? How exactly?
\item One should be able to write explicit solutions of equations \eqref{eq:cc}--\eqref{eq:elip} using elliptic functions. See \cite{P}.
%

\mn {\em Note.} One can write down an explicit general solution for the case $c = 0$ without any special functions, and one can verify analytically that the arcs are semicircles. So, as embedded submanifolds they are $C^1$ but not $C^2$ at inflection points. In particular, the chain ODE is not satisfied at these inflection points.
\item Equation \eqref{eq:elip} is the equation of a pendulum under a strange  force law:
$f(\theta)=4 (\cos\theta -c)\sin \theta,$ with special initial conditions: $\theta(0)=0, \dot\theta(0)=8c-c^2$ (for $c=0$ it is the homoclinic solution of the pendulum equation $\ddot\theta=2\sin2\theta$. Is there a good mechanical/geometrical interpretation  of  this motion?

\item The chains of this geometry project to a 1-parameter family of curves in $\R^2$ (up to rigid motion). Is there a simple geometric description of this family? Our first guess was elastica but it is not the case.

\item In the pictures, there are points along $\z(t)$ at which $\theta(t)$ is the direction of the tangent $\dot\z(t)$ (the inflection points of the red curves on the right of Figure \ref{fig:circle_chains}). Is this phenomenon unavoidable?
%

\end{enumerate}

\subsection{Hooke ellipses of fixed area}
The manifold
\[
    M=\{(x,y,E,F,G)\in\R^5\st Ex^2+2Fxy+Gz^2=1, \ EG-F^2=1, \ E>0\}
\]
 parametrizes the set of incident pairs $(\r,\cE)$, where $\r=(x,y)^t\in\R^2\setminus \{0\}$ and
 $\cE$ is an ellipse centered at the origin (a `Hooke ellipse') of area $\pi$.
 \begin{proposition}\label{prop:ppg}
  The path geometry in $\R^2\setminus \{0\}$ of Hooke ellipses of fixed area is projective (the paths are the unparametrized geodesics of a torsion-free affine connection).
 \end{proposition}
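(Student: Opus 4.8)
The plan is to exhibit, directly, a torsion-free affine connection $\nabla$ on $\R^2\setminus\{0\}$ whose unparametrized geodesics in the non-radial directions are exactly the Hooke ellipses of area $\pi$; since this is precisely the definition of projectivity (and the radial directions carry no path of this geometry, as recalled in Remark \ref{remark:generalized-path-geometry}), this settles the proposition. Write $\Omega$ for the standard area form $\d x\wedge \d y$ on $\R^2$, an $\SLt$-invariant, and $\Omega(\r,\dot\r)=x\dot y-y\dot x$ for its pairing of the position and velocity of a curve $t\mapsto\r(t)=(x(t),y(t))$. A Hooke ellipse of area $\pi$ is the orbit of a solution of $\ddot\r=-\r$ enclosing area $\pi$, i.e. a curve $t\mapsto\a\cos t+\b\sin t$ with $\Omega(\a,\b)=\pm1$; in this parametrization $\Omega(\r,\dot\r)\equiv\Omega(\a,\b)=\pm1$.

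First I would define $\nabla$ by the geodesic equation $\ddot\r=-\Omega(\r,\dot\r)^2\,\r$. The right-hand side is a quadratic form in $\dot\r$, so this determines a unique torsion-free connection; explicitly its Christoffel symbols are $\Gamma^i_{jk}=x^i(\r^\perp)_j(\r^\perp)_k$, where $(x^1,x^2)=(x,y)$ and $\r^\perp=(y,-x)$, so that $\Gamma^i_{jk}\dot x^j\dot x^k=x^i\,\Omega(\r,\dot\r)^2$. This $\nabla$ is smooth on all of $\R^2$, visibly torsion-free, and $\SLt$-invariant because $\Omega$ and the radial vector field are.

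Next I would identify the geodesics of $\nabla$. Differentiating along a geodesic gives $\tfrac{\d}{\d t}\Omega(\r,\dot\r)=\Omega(\dot\r,\dot\r)+\Omega(\r,\ddot\r)=0+\Omega\bigl(\r,-\Omega(\r,\dot\r)^2\r\bigr)=0$, so $\Omega(\r,\dot\r)$ equals a constant $c$ on each geodesic, and the geodesic equation collapses to the linear one $\ddot\r=-c^2\r$. For $c\neq0$ its solutions are $\r=\a\cos(ct)+\b\sin(ct)$ with $c\,\Omega(\a,\b)=\Omega(\r,\dot\r)=c$, i.e. $\Omega(\a,\b)=1$, hence Hooke ellipses of area $\pi$; conversely each such ellipse solves $\ddot\r=-\r$ with $\Omega(\r,\dot\r)=\pm1$, so (reversing orientation if needed) it is an unparametrized geodesic of $\nabla$. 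For $c=0$ one gets $\ddot\r=0$ with $\dot\r\parallel\r$, i.e. the rays through the origin — radial, hence not paths of the geometry. Since through each point of $\R^2\setminus\{0\}$ in each non-radial direction there is exactly one Hooke ellipse of area $\pi$ and exactly one non-radial geodesic of $\nabla$, the two families coincide, and the path geometry is the geodesic path geometry of the torsion-free connection $\nabla$.

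I do not expect a serious obstacle here; the only point requiring care is the bookkeeping at the boundary of the domain — matching the abstract path geometry on $M$ with the geodesic family of $\nabla$ and splitting off the exceptional radial geodesics. As an alternative verification, one can instead eliminate $E,F,G$ from $Ex^2+2Fxy+Gy^2=1$, $EG-F^2=1$ and the first two differentiations of the first relation along a path $y=y(x)$; this produces the single second-order ODE $y''=(xy'-y)^3=x^3(y')^3-3x^2y(y')^2+3xy^2y'-y^3$, which is a polynomial of degree $\le 3$ in $y'$, so projectivity also follows from the criterion of \cite{CartanProjective} (see also Section 4 of \cite{D}).
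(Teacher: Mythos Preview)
Your argument is correct, and your main route differs from the paper's. The paper's primary proof is the elimination you sketch at the end: parametrize the Hooke ellipses of area $\pi$ by $x^2-2axy+(a^2+b^2)y^2=b$, differentiate twice along $y=y(x)$, eliminate $a,b$, and obtain $y''=(xy'-y)^3$, which is cubic in $y'$ and hence projective by the classical criterion. The paper then mentions, without details, a Riemannian metric in polar coordinates and an equivalence with Kepler ellipses as alternatives.

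Your main argument instead produces a torsion-free connection explicitly via $\ddot\r=-\Omega(\r,\dot\r)^2\r$ and verifies that its non-radial geodesics are exactly the area-$\pi$ Hooke ellipses. This is genuinely different from any of the paper's three arguments and has the virtue of matching the definition of projectivity head-on, exhibiting the connection rather than invoking the cubic-in-$y'$ criterion; it also makes the $\SLt$-invariance of the projective class manifest. The paper's ODE approach, on the other hand, is shorter and immediately yields the normal form $y''=(xy'-y)^3$ used elsewhere. Your closing paragraph is essentially the paper's main proof, so in fact you give two independent arguments.
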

 \begin{proof}
As mentioned before, this  is equivalent to showing  that the associated  ODE $y''=f(x,y,y')$ is cubic in $y'$.  Let $\H=\{(E,F,G)\st EG-F^2=1, E>0\}$ be the path space. We parametrize $\H$ by  the upper half-plane  $\R^2_+=\{(a,b)\st b>0\}$,
\be\label{eq:uhp}
(a,b)\mapsto{1\over b}(1, -a ,a^2+b^2).
\ee
  Hooke ellipses of area $\pi$ are then given by equations of the form
\be\label{eq:huh}
 x^2 -2 a x y+ (a^2+b^2 )y^2=b, \quad a,b\in\R, b>0.
\ee
 Assuming $y=y(x)$ in this equation and taking two derivatives with respect to $x$, we get
 \begin{align*}
&x -a (y+x y')+(a^2+b^2)yy'=0,\\
&1-a(2y'+xy'')+ (a^2+b^2 )\left[(y')^2+y y''\right]=0.
\end{align*}
Eliminating $a,b$ from the last 3 equations  and solving for $y''$, we obtain
$$y''=(xy'-y)^3.$$

Another  proof, more direct, consists of showing that Hooke  ellipses of area $\pi$ are the (unparametrized) geodesics of a Riemannian metric in $\R^2\setminus \{0\}$, given in polar coordinates by
$ds^2={dr^2}/\Delta^2+{r^2d\theta^2}/\Delta,$ $\Delta=1+cr^2+r^4,$  $c\in\R$.

See \cite{BJ} for yet another proof, via  equivalence with the path geometry of Kepler ellipses of fixed major axis, which is projective since these are geodesics of the Jacobi-Maupertuis metric of the Kepler problem.
 \end{proof}

 \paragraph{Fefferman metric.}
 Let $L_1\subset TM$ be the tangents to the fibers of the projection on the first component,
$(q,\cE)\mapsto q$, and similarly for $L_2$. The group $\SLt$ acts transitively and freely on $M$ via its standard linear action on $\R^2$, preserving $L_1, L_2$. Fixing a point  $(q_0,\cE_0)\in M$ identifies $M$ with $\SLt$, and $L_1, L_2$ with two  left-invariant line fields on $\SLt$, given at $\id\in\SLt$ by the Lie algebras of the stabilizers  of $q_0,\cE_0$, respectively.

The Lie algebra $\slt$ of $\SLt$ consists of matrices of the form
 $$\left(\begin{array}{rr}
x^1 &x^2\\
x^3 & -x^1
\end{array}\right),\quad x^i\in\R.
$$
The left-invariant $\slt$-valued Maurer--Cartan form  on $\SLt$ is
\be\label{eqn:mcsl2}\Theta=g^{-1}\d g= \left(\begin{array}{rr}
\theta^1 &\theta^2\\
\theta^3 & -\theta^1
\end{array}\right).
\ee
 The Maurer--Cartan equation $\d\Theta=-\Theta\wedge\Theta$ gives
 \be\label{eq:mc2}
 \d\theta^1=-\theta^2\wedge\theta^3, \  \d\theta^2=-2\theta^1\wedge\theta^2, \  \d\theta^3=2\theta^1\wedge\theta^3.
\ee
Fix
$q_0:=(1,0)^t$,  $\cE_0:=\{x^2+y^2=1\}$. Then
$$(L_1)_\id=\Span\left(\begin{array}{cc}
0&1\\
0&0
\end{array}\right),\ (L_2)_\id=\Span\left(\begin{array}{cc}
0&-1\\
1&0
\end{array}\right),\ D=L_1\oplus L_2=\Ker(\theta^1).$$
An adapted coframe is thus

$$\eta^1=\theta^2+\theta^3,\quad
 \eta^2=\theta^3, \quad \eta^3:=-\theta^1.
$$
We use this coframe to trivialize the associated $\R^*$-structure $B\simeq \SLt\times \R^*$ and put the standard coordinate $s$ on the $\R^*$ factor. The associated 1-forms on $B$ are
 $$\omega^1={1\over s}(\theta^2+\theta^3),\quad
\omega^2=s\theta^3,\quad
\omega^3=-\theta^1.
$$
Solving the structure equations \eqref{eq:str3}-\eqref{eq:str4}, we get $\alpha=2\theta^1+\theta^4,\ a_1=4/s^2,\  a_2=0, \ K=-2,$ where $\theta^4=(\d s)/s$ (the MC form on $\R^*$), which gives, using equations \eqref{eq:feff2}--\eqref{eq:feff3},
$\sigma=-\theta^1-(2/3)\theta^4$ and
\be\label{eq:fef5}
\grm=(\theta^1)^2+(\theta^3)^2+\theta^2 \theta^3+\frac{2}{3} \theta^1 \theta^4.
\ee

\bn{\bf Hooke chains (null geodesics of the Fefferman metric).}
The pseudo-Riemannian metric \eqref{eq:fef5} is a left-invariant metric on the Lie group $G:=\SLt\times \R^*.$ Let $\g=\slt\times\R$ be its Lie algebra and  $A:\g\to\g^*$ the `inertia' operator corresponding to the quadratic form \eqref{eq:fef5}; that is, $\grm(X,Y)=(AX)Y,$ $X,Y\in\g$. Then
$$A={1\over 6}\left(
\begin{array}{cccc}
6 & 0 & 0 & 2 \\
 0 & 0 & 3 & 0 \\
 0 & 3 & 6 & 0 \\
 2& 0 & 0 & 0 \\
\end{array}
\right)
$$
(with respect to  the basis $\{\theta^i\}$ and its dual).  As in previous examples, the geodesic flow  on $T^*G$ projects  to  $\dot \M=\mathrm{ad}_{A^{-1}\M}^*\M$ on  $\g^*$,
  the Hamiltonian equations with respect to  the standard Lie-Poisson structure on $\g^*$   with Hamiltonian   $H={1\over 2}(\M,A^{-1}\M).$
To write these down explicitly, we first represent $X\in\g$ and $\mathrm{ad}_X^*\in\End(\g^*)$ by   the matrices
$$X=\left(
\begin{array}{ccc}
x^1&  x^2 & 0 \\
x^3 &-x^1 &  0 \\
0 & 0 & x^4
\end{array}
\right), \qquad  \mathrm{ad}_X^*=
\left(
\begin{array}{cccc}
 0 & -2 x^2 & 2 x^3& 0 \\
 -x^3 & 2 x^1 & 0 & 0 \\
 x^2 & 0 & -2 x^1 & 0 \\
 0 & 0 & 0 & 0 \\
\end{array}
\right)
,
$$
so $\dot \M=\ad_{A^{-1}\M}^*\M$  becomes
\begin{eqnarray}\label{eq:euler1}
\begin{split}
\dot \M_1&=8 (\M_2)^2,\quad
\dot \M_2=2 \M_2(3 \M_4-\M_1), \\
\dot \M_3&=2\M_1(\M_3-2\M_2)-6\M_3\M_4, \quad
\dot \M_4=0,
\end{split}
\end{eqnarray}
with constants of motion $\M_4,k, H$, where
\be\label{eq:euler2}
 k=(\M_1)^2+4\M_2\M_3, \quad
 H=2 \M_2\M_3-2 (\M_2)^2+3 \M_1\M_4-9 \M_4^2/2=0.
\ee
Note that $k$ is a Casimir of $\g^*$ coming  from the Killing form of $\slt$. We set $H=0$ since we are looking for null geodesics.
Next we make the following change of variables
$$\M_1=b(c+\sin\phi),\quad \M_2={b\over 2}\cos\phi,\quad \M_3
=b\left(\cos\phi -{p\over 2}\right),\quad \M_4={b c\over 3}.
$$
We have  $k-2H=b^2$, hence $b$ is constant. Since $P_4=bc/2$ is constant  $c$ is constant as well. Equations \eqref{eq:euler1}-\eqref{eq:euler2} then  reduce to
\be\label{eq:hchain}
\dot\phi=2b\cos\phi,\quad
p=\cos  \phi + c (c +2  \sin  \phi )\sec \phi.
\ee

 Next let $g(t)\in \SLt\times\R^*$ be a null geodesic, with
$$g(t)=  \left(\begin{array}{ccc}
x&z&0\\
y&w&0\\
  0&0&s
  \end{array}\right), \quad x,y,z,w,s\in\R,\ s\neq 0, \ xw-yz=1.$$
Let  $X=g^{-1}\dot g\in\g$. Then $\M=AX$ satisfies equations \eqref{eq:euler1}. Explicitly,
 \begin{align*}
&\dot x  =  x^1 x+ x^3 z=b[c x+(\cos\phi) z],
\ \quad\dot z  = x^2 x - x^1 z=-b[p\, x+cz], \\
&\dot y  = x^1 y+ x^3 w=b[cy+(\cos\phi)w],
\quad\dot w =  x^2 y- x^1 w=-b[p\,y+cw],
\end{align*}
where $p, \phi$ are  given by equation \eqref{eq:hchain}.
Denote $\r:=(x,y), \ \hb:=(z,w)\in\R^2$, then the last system is
\be\label{eq:sl2chains}
\dot\r=b\left[c\r+(\cos\phi)\hb\right],\quad \dot\hb=-b\left[p\r+c\hb\right].
\ee
\begin{lemma}\label{lemma:ca}
$\phi$ is  {\em twice the centro-affine arclength} of the projection of the  chain to the Hooke plane (the $\r$ plane).
\end{lemma}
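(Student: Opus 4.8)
The plan is to read off the result directly from the explicit formula \eqref{eq:hchain} for $\phi$ and the explicit chain equations \eqref{eq:sl2chains}, using only the defining property of centro-affine arclength. Recall that a curve $\r(t)$ in $\R^2\setminus\{0\}$ is \emph{non-degenerate} in centro-affine geometry when $[\r,\dot\r]\neq 0$, where $[\,\cdot\,,\cdot\,]$ denotes the standard area form on $\R^2$; its \emph{centro-affine arclength} element is then $\d s=[\r,\dot\r]\,\d t$, i.e.\ $s$ is, up to an additive constant, the unique parameter for which $[\r,\r_s]\equiv 1$. This prescription is visibly invariant under the standard linear $\SLt$-action, as it must be.

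First I would compute $[\r,\dot\r]$ for the projected chain. By construction $\r(t)=g(t)q_0$ is the first column of $g(t)\in\SLt$ and $\hb(t)$ is its second column, so $[\r,\hb]=\det g(t)=1$. From the first equation of \eqref{eq:sl2chains}, $\dot\r=b(c\,\r+(\cos\phi)\hb)$, whence
\[
    [\r,\dot\r]=b\,c\,[\r,\r]+b\cos\phi\,[\r,\hb]=b\cos\phi ,
\]
using $[\r,\r]=0$. In particular $[\r,\dot\r]$ is nowhere zero along the chain --- consistently, the chain equations are singular exactly where $\cos\phi=0$ --- so the projected chain is centro-affine non-degenerate and $\d s=b\cos\phi\,\d t$.

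It remains to compare with $\phi$ itself. The first equation of \eqref{eq:hchain} reads $\dot\phi=2b\cos\phi=2[\r,\dot\r]$, i.e.\ $\d\phi=2\,\d s$; integrating gives $\phi=2s+\mathrm{const}$. Since centro-affine arclength is defined only up to the choice of a basepoint, this is exactly the assertion that $\phi$ equals twice the centro-affine arclength of the projected chain. I do not anticipate any genuine obstacle: the only points requiring care are pinning down the standard normalization $[\r,\r_s]\equiv 1$ of centro-affine arclength and its $\SLt$-invariance, and the minor orientation subtlety that on an arc where $\cos\phi<0$ one should read $\d s=\lvert[\r,\dot\r]\rvert\,\d t$, or simply restrict attention to an arc on which $\cos\phi$ has constant sign. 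Everything else is the two-line computation above.
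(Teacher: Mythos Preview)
Your argument is correct and is essentially the same as the paper's: both use $[\r,\hb]=\det g=1$, the chain equation $\dot\r=b(c\,\r+(\cos\phi)\hb)$, and $\dot\phi=2b\cos\phi$ to conclude $[\r,\d\r/\d\phi]=1/2$, hence $\phi=2s$. The paper's proof is just the two-line version of what you wrote, computing $[\r,\dot\r/\dot\phi]=[\r,\hb/2]=1/2$ directly.
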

\begin{proof} $\r,\hb$ are the columns of a matrix in $\SLt$, hence $[\r, \hb]=1$. It then follows from equations \eqref{eq:sl2chains}
that $[\r, \d \r/\d\phi]=[\r,\dot\r/\dot\phi]=[\r,\hb/2]=1/2.$
\end{proof}

Let us reparametrize the chains by  $\tau:=\phi/2$ (the centro-affine arclength) and denote derivative with respect to $\tau$ by $(\ )^\prime$. Equations \eqref{eq:sl2chains} now become
\begin{align}\label{eq:sl2chains1}
\begin{split}
&\r'=c(\sec2\tau)\r+\hb, \\
&\hb'=-\left[1+c(c+2(\sin2\tau))\sec^22\tau\right]\r-c(\sec2\tau)\hb.
\end{split}
\end{align}

\begin{lemma}\label{lemma:proj}
$$
\r''=- \r .$$
 \end{lemma}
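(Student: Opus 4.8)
The plan is to differentiate the first equation of the system \eqref{eq:sl2chains1} once more with respect to $\tau$ and substitute the expressions for $\r'$ and $\hb'$ supplied by that same system, so that everything collapses to a second-order ODE in $\r$ alone. Concretely, write $u:=c\sec 2\tau$, so that the system reads $\r'=u\r+\hb$ and $\hb'=-(1+u(c+2\sin 2\tau)\sec 2\tau)\r-u\hb$; note that the scalar coefficient of $\r$ in the $\hb'$ equation is exactly $1+uv$ where $v:=(c+2\sin 2\tau)\sec 2\tau$, and also $p=\cos 2\tau + uv$ in the notation of \eqref{eq:hchain} after the reparametrization $\tau=\phi/2$. (I will keep careful track of the factor of $2$ coming from $\phi=2\tau$, since $\d/\d\tau = 2\,\d/\d\phi$; this bookkeeping is the one genuinely error-prone spot.)

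Next I would compute $\r''=(u\r+\hb)' = u'\r + u\r' + \hb'$. Substituting $\r'=u\r+\hb$ and $\hb'=-(1+uv)\r-u\hb$ gives
\[
\r'' = \bigl(u' + u^2 - 1 - uv\bigr)\r + \bigl(u + (-u)\bigr)\hb = \bigl(u' + u^2 - 1 - uv\bigr)\r .
\]
The $\hb$ terms cancel identically, which is the structural reason the answer is so clean. It then remains to verify the purely trigonometric identity $u' + u^2 - uv = 0$, i.e. that the scalar coefficient of $\r$ reduces to $-1$. With $u=c\sec 2\tau$ one has $u'=2c\sec 2\tau\tan 2\tau$, $u^2 = c^2\sec^2 2\tau$, and $uv = c\sec 2\tau\,(c+2\sin 2\tau)\sec 2\tau = c^2\sec^2 2\tau + 2c\sec^2 2\tau\sin 2\tau = c^2\sec^2 2\tau + 2c\sec 2\tau\tan 2\tau$; hence $u' + u^2 - uv = 2c\sec 2\tau\tan 2\tau + c^2\sec^2 2\tau - c^2\sec^2 2\tau - 2c\sec 2\tau\tan 2\tau = 0$, as claimed. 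Therefore $\r''=-\r$.

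I do not anticipate a serious obstacle: the computation is elementary once the system \eqref{eq:sl2chains1} is in hand, and the key phenomenon — the cancellation of the $\hb$-contribution in $\r''$ — is forced by the skew structure of the coefficient matrix (the diagonal entries are $\pm u$ and the off-diagonal entry multiplying $\hb$ is $1$, matching the coefficient of $\r$ in $\r'$). The only thing to be careful about is consistency of the $\phi$-versus-$\tau$ normalization between \eqref{eq:hchain} and \eqref{eq:sl2chains1}, and the sign conventions in the coefficient $p=\cos 2\tau + c(c+2\sin 2\tau)\sec^2 2\tau$ as it appears in \eqref{eq:sl2chains1}; I would state these explicitly at the start of the proof and then let the one-line differentiation do the rest. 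An alternative, essentially equivalent route is to observe that $\r$ and $\hb/?$ are related to the columns of the $\SL_2$-factor of $g(t)$ and that the $\slt$-part of $X=g^{-1}\dot g$, after the reparametrization, has the form of a rotation generator plus a multiple of the identity-trace-free shear whose square is $-\id$ on the relevant plane, but the direct computation above is shorter and self-contained.
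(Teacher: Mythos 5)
Your computation is correct and is exactly the ``straightforward calculation from equations \eqref{eq:sl2chains1}'' that the paper's proof consists of: differentiate the first equation, substitute the system, observe the $\hb$-terms cancel, and check the trigonometric identity reduces the coefficient of $\r$ to $-1$. (Only a cosmetic slip: with your $u,v$ one has $p\sec 2\tau=1+uv$, not $p=\cos 2\tau+uv$, but since you take the coefficient $-(1+uv)$ directly from \eqref{eq:sl2chains1} this does not affect the argument.)
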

\begin{proof}Straightforward calculation from equations \eqref{eq:sl2chains1}.
\end{proof}
 Thus, combined with $[\r,\r']=1$ (Lemma \ref{lemma:ca}), each Hooke chain projects to a Hooke ellipse of area $\pi$ in the $\r$ plane, as expected from Proposition \ref{prop:ppg} and Theorem \ref{thm:proj_chains}.

 \begin{proposition}\label{prop:hc}
 Every chain in  $\SLt$  of the path geometry of Hooke ellipses of area $\pi$, up to left translation,  is of the form
 $$\r=e^{i\tau}, \quad \hb=e^{i\tau}(-c\sec(2\tau)+i)$$
 (using complex notation),
 for some $c\in\R$, $c\neq 0$. See Figure \ref{fig:hooke_chains}.
 \end{proposition}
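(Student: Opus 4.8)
The plan is to integrate the reduced system \eqref{eq:sl2chains1} explicitly and then transport the result back to $\SLt$. By Lemma \ref{lemma:proj} we already know that $\r$ satisfies $\r'' = -\r$, so in complex notation $\r(\tau) = A\,e^{i\tau} + B\,e^{-i\tau}$ for constants $A, B \in \C$; the normalization $[\r,\r']=1$ from Lemma \ref{lemma:ca} forces $|A|^2 - |B|^2 = 1/2$ (after a short computation with the symplectic pairing). Since we are free to act by a left translation in $\SLt$ — which acts on the columns $\r,\hb$ by a fixed element of $\SLt$ — I would use this freedom to normalize $\r(0)$ and $\r'(0)$, say to $\r(0) = 1$, $\r'(0) = i$, which kills $B$ and sets $A = 1$, giving $\r = e^{i\tau}$ as claimed.

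Next I would recover $\hb$. From the first equation of \eqref{eq:sl2chains1}, $\hb = \r' - c(\sec 2\tau)\r$; substituting $\r = e^{i\tau}$, $\r' = i e^{i\tau}$ gives immediately $\hb = e^{i\tau}\bigl(i - c\sec(2\tau)\bigr) = e^{i\tau}\bigl(-c\sec(2\tau) + i\bigr)$, which is exactly the asserted formula. It then remains only to check consistency: one must verify that this $(\r,\hb)$ actually satisfies the \emph{second} equation of \eqref{eq:sl2chains1} as well (this is where the parameter $c$ is pinned down and where one confirms that no further constraint is imposed), and that $[\r,\hb] = 1$ so that $(\r\mid\hb)$ genuinely lands in $\SLt$. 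The determinant check is quick: $[\r,\hb] = \operatorname{Im}(\bar\r \hb) = \operatorname{Im}(-c\sec 2\tau + i) = 1$. Plugging into the $\hb'$ equation is a direct differentiation using $\tfrac{d}{d\tau}\sec 2\tau = 2\sec 2\tau \tan 2\tau$ and the identity $1 + c(c + 2\sin 2\tau)\sec^2 2\tau$ from the coefficient; the terms should cancel identically, confirming that every $c \neq 0$ yields a genuine solution and that these are all of them (the solution space of the linear ODE \eqref{eq:sl2chains1} is $4$-dimensional, three dimensions are absorbed by left translation, and the remaining parameter is $c$; the case $c = 0$ is degenerate and excluded as in the earlier propositions).

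The one genuinely non-routine point is bookkeeping the group of reductions: one must be careful that "up to left translation" is exactly the right normalization to bring a general solution to the stated normal form, i.e. that the stabilizer of the normal-form initial data $(\r(0),\r'(0)) = (1, i)$ inside $\SLt$ is trivial (it is — an element of $\SLt$ fixing both $1$ and $i$ as columns is the identity), so that the chain is determined by $c$ alone and distinct $c$ give distinct (non-left-translation-equivalent) chains. Modulo that observation, the proof is the explicit integration sketched above together with the two verification calculations, which I would present compactly rather than in full.
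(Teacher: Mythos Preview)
Your approach is correct and is essentially the same as the paper's, just phrased more computationally. The paper argues in one line that $\SLt$ acts transitively on Hooke ellipses of area $\pi$, so the projection of any chain (which by Lemmas \ref{lemma:ca} and \ref{lemma:proj} is such an ellipse) can be left-translated to the unit circle, giving $\r=e^{i\tau}$; then $\hb$ is read off from the first equation of \eqref{eq:sl2chains1}, and $c=0$ is excluded because it makes the curve tangent to $D$. Your version replaces the transitivity observation with the explicit integration of $\r''=-\r$ plus a left-translation normalization of initial data, which amounts to the same thing. Two minor remarks: the constraint from $[\r,\r']=1$ gives $|A|^2-|B|^2=1$, not $1/2$ (harmless, since you do not use the value); and the ``consistency check'' of the second equation of \eqref{eq:sl2chains1} is unnecessary, since $\hb$ was \emph{defined} via the first equation and the second then follows from $\r''=-\r$, which you already have.
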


 \begin{proof}
 $\SLt$ acts transitively on Hooke ellipses of area $\pi$, hence the projection of the chain to the $\r$ plane can be brought to the unit circle. Parametrized by centro affine arc length, it is $\r=e^{i\tau}$. Then the 1st equation of \eqref{eq:sl2chains1} implies the formula for $\hb(\tau).$ For $c=0$ this formula produces a curve tangent to the contact distribution, which is excluded.
 \end{proof}

\begin{figure}
\centering
\includegraphics[width=.3\textwidth]{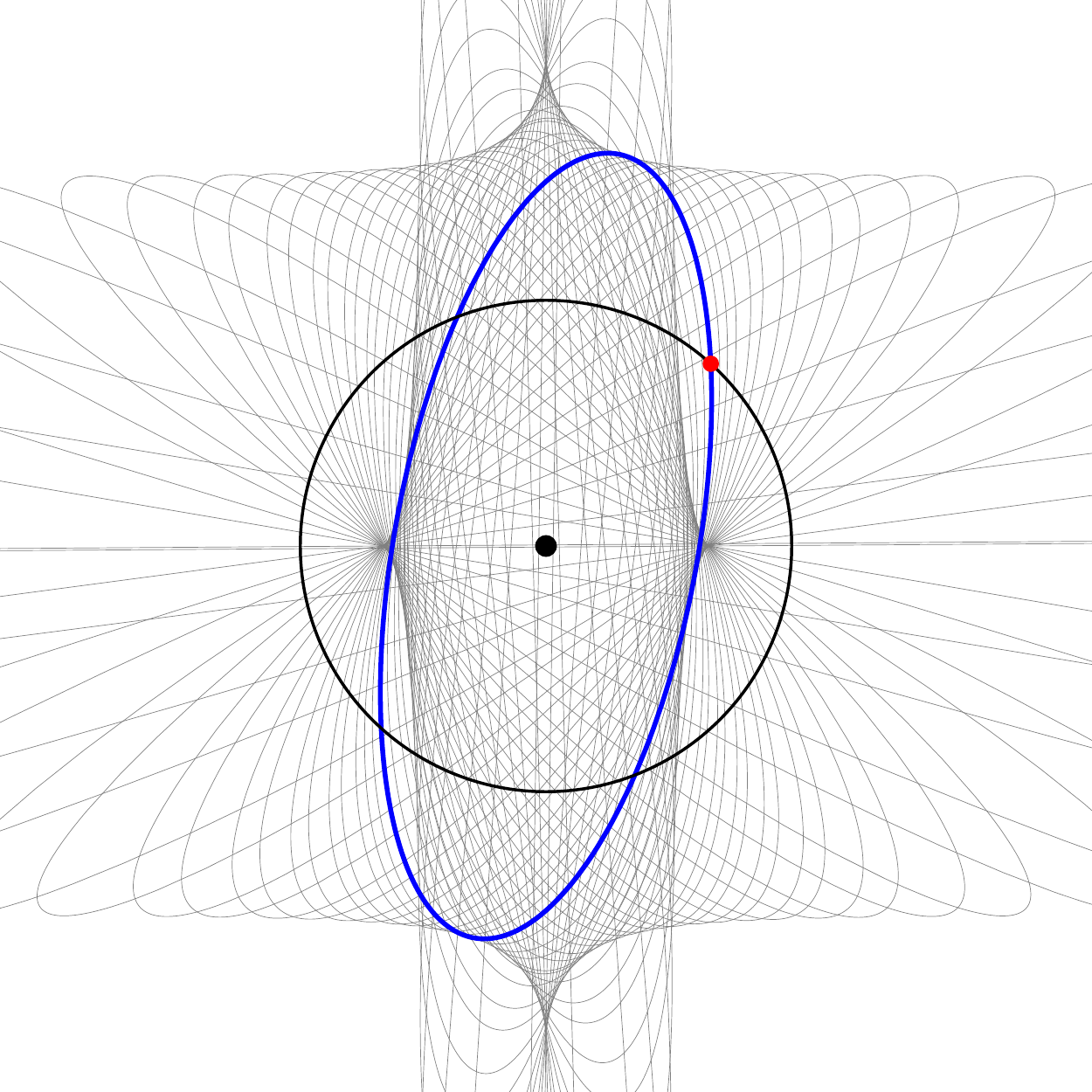}\quad
\includegraphics[width=.3\textwidth]{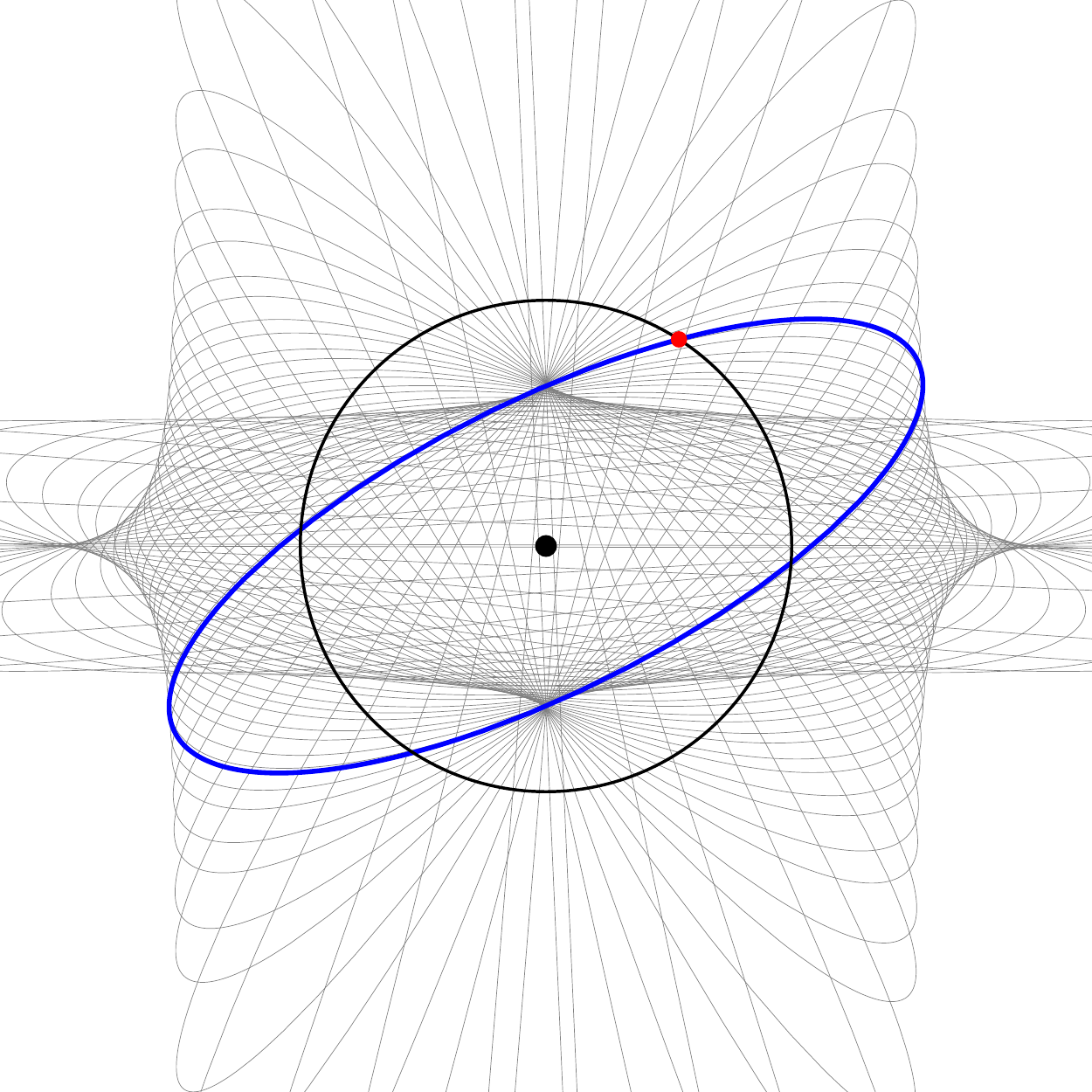}\quad
\includegraphics[width=.3\textwidth]{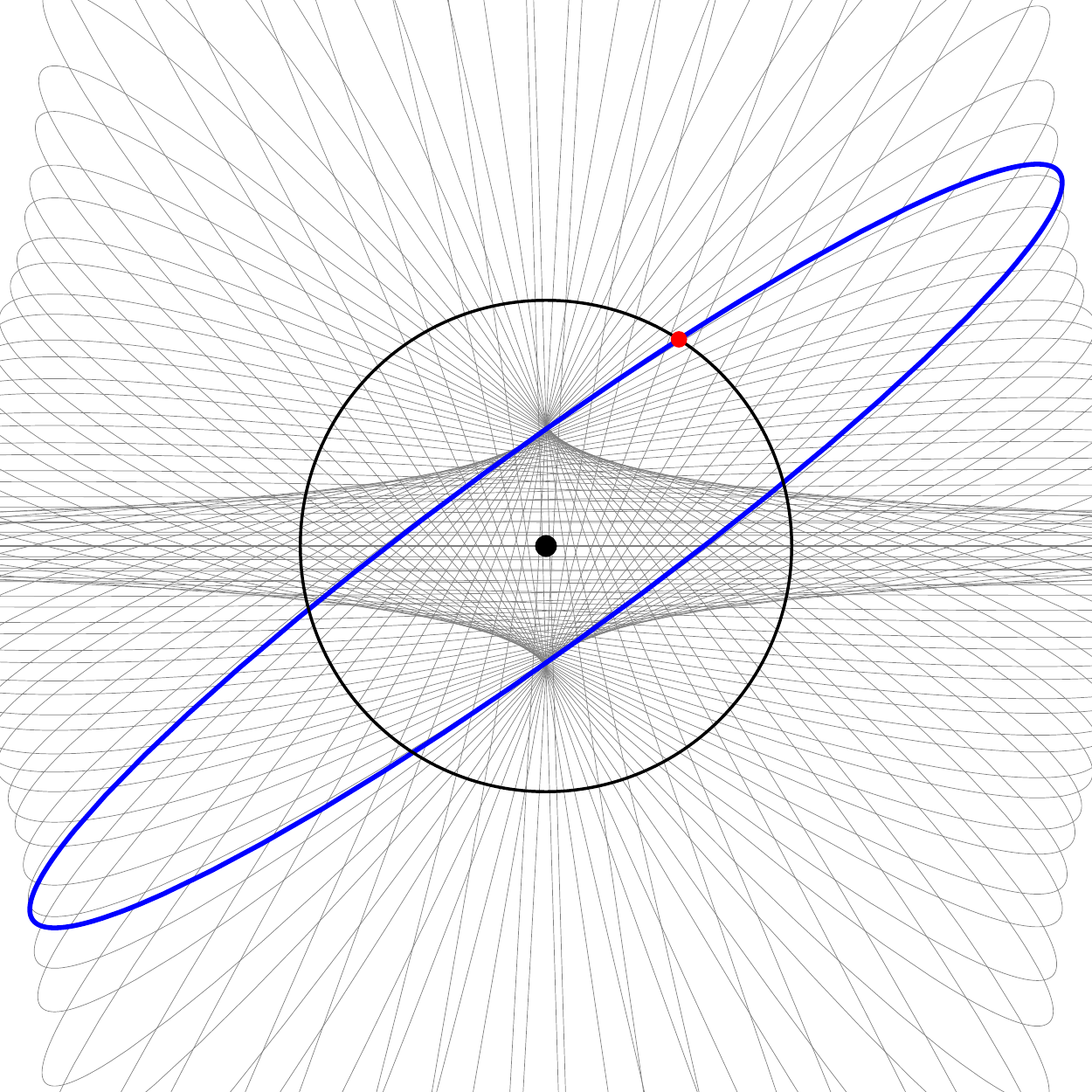}
\caption{Hooke's chains, given by Proposition \ref{prop:hc}, for $c=-1,1,2$. }\label{fig:hooke_chains}
\end{figure}

 \subsection{Horocycles in the hyperbolic plane}

The space of Hooke ellipses is  $\H=\{(E,F,G)\st EG-F^2=1, E>0\}$, the hyperboloid model of the hyperbolic plane. The curves in $\H$ of constant (hyperbolic) curvature 1 are called {\em horocycles} and are the  sections of $\H$  by planes parallel to a  generator of the cone   $EG-F^2=0$. In the upper half-plane model these are (Euclidean) circles tangent to the real axis.
\begin{lemma}
For each fixed $(x,y)\in\R^2\setminus \{0\}$, the set of Hooke ellipses passing through $(x,y)$ is a  horocycle in $\H$. This  defines a bijection between the punctured plane $\R^2\setminus \{0\}$ and the space of horocycles in $\H$.
\end{lemma}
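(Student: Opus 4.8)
The plan is to make direct use of the two explicit descriptions already introduced in the Hooke-ellipse subsection. On the one hand, a Hooke ellipse of area $\pi$ has the form \eqref{eq:huh}, $x^2 - 2axy + (a^2+b^2)y^2 = b$ with $(a,b)\in\R^2_+$, which via \eqref{eq:uhp} is exactly the identification of $\H$ with the upper half-plane. On the other hand, fixing $(x,y) = \r \in \R^2\setminus\{0\}$ and reading \eqref{eq:huh} as a condition on $(a,b)$ gives a curve in $\R^2_+$; the lemma asserts this curve is a horocycle and that $\r \mapsto \{\text{this curve}\}$ is a bijection onto the set of horocycles.

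First I would fix $\r = (x,y) \neq 0$ and expand \eqref{eq:huh} as an equation in $(a,b)$: it reads $y^2 a^2 - 2xy\, a + (x^2 + y^2 b^2 - b) = 0$, i.e. $y^2(a^2 + b^2) - 2xy\, a + x^2 = b$. Completing the square in $a$ (when $y\neq 0$) puts this in the form $y^2 a^2 - 2xy\,a + y^2 b^2 - b + x^2 = 0$, equivalently $\left(a - \tfrac{x}{y}\right)^2 + b^2 = \tfrac{b}{y^2}$, so $\left(a - \tfrac{x}{y}\right)^2 + \left(b - \tfrac{1}{2y^2}\right)^2 = \tfrac{1}{4y^4}$. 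This is a Euclidean circle in the $(a,b)$-plane of radius $\tfrac{1}{2y^2}$ centered at $\left(\tfrac{x}{y}, \tfrac{1}{2y^2}\right)$ — a circle tangent to the real axis $\{b=0\}$ from above, hence a horocycle in the upper half-plane model of $\H$. The case $y = 0$ (so $\r = (x,0)$, $x\neq 0$) is handled separately: \eqref{eq:huh} becomes $x^2 = b$, i.e. the horizontal line $\{b = x^2\}$, which is the remaining family of horocycles (circles through the point at infinity, tangent to the boundary there). So in all cases the level set is a horocycle.

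Next I would establish the bijection. Every horocycle in the upper half-plane is either a circle tangent to $\{b=0\}$ at a point $(p,0)$ with some radius $\rho>0$, or a horizontal line $\{b = h\}$ with $h > 0$. Matching with the formulas above: a tangent circle with center $(p,\rho)$ and radius $\rho$ corresponds to $\tfrac{x}{y} = p$ and $\tfrac{1}{2y^2} = \rho$, which determines $y^2 = \tfrac{1}{2\rho}$ and then $x = py$, giving exactly two solutions $\pm(py, y)$ with $y = 1/\sqrt{2\rho}$; but these two points are antipodal, $\r$ and $-\r$, and \eqref{eq:huh} is even in $\r$, so they define the same horocycle — wait, that would contradict injectivity, so I need to be careful here. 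In fact \eqref{eq:huh} is \emph{not} invariant under $\r \mapsto -\r$ giving a new ellipse; rather the ellipse through $\r$ and the ellipse through $-\r$ are literally the same equation, so the map $\r \mapsto (\text{horocycle of ellipses through }\r)$ genuinely has $\r$ and $-\r$ with the same image. This means the honest statement is a bijection between $(\R^2\setminus\{0\})/\{\pm 1\}$ and horocycles, \emph{unless} one checks that actually the center-of-projection data distinguishes them. I would resolve this by recomputing: from $\left(a-\tfrac{x}{y}\right)^2 + \left(b - \tfrac{1}{2y^2}\right)^2 = \tfrac{1}{4y^4}$ the horocycle determines $\tfrac{x}{y}$ and $y^2$ but not the sign of $y$; however the sign of $y$ together with $\tfrac{x}{y}$ recovers the sign of $x$ too, so the fiber is exactly $\{\r, -\r\}$. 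Thus the correct reading — and presumably what the paper intends, consistent with $\Sigma = \R^2\setminus\{0\}$ being the Hooke-ellipse plane and the path space being $\H$ — is that the natural domain is the quotient, or equivalently that one should think of $\r$ as a \emph{point of $\RP^1$-worth of rays}; I would phrase the proof to state the bijection at the level that matches Definition~\ref{def:pg}'s recovery of $\Sigma^*$ as the leaf space of $L_2$, noting the two-to-one ambiguity is exactly the $\Z_2$ stabilizer appearing already for Hooke ellipses.

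The main obstacle is precisely this sign/quotient bookkeeping: making the "bijection" statement literally correct requires either passing to $(\R^2\setminus\{0\})/\{\pm1\}$ or observing that in the path-geometry setup the relevant object is the leaf of $L_2$ through a point of $M$, where $M$ already fibers over $\R^2\setminus\{0\}$ with the sign resolved by the data of the ellipse-point incidence. Once that is pinned down, the geometric content — tangent circles and horizontal lines exhaust horocycles and match the completed-square normal forms — is a short computation. I would also double-check the boundary/limiting behavior as $y\to 0$ to confirm the horizontal-line family glues continuously onto the tangent-circle family, completing surjectivity onto \emph{all} horocycles (including the "limit" ones), and remark that this bijection is exactly the duality $a^* = h$ advertised in Figure~\ref{fig:gallery}.
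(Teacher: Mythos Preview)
Your approach is exactly the paper's: fix $\r=(x,y)$, read \eqref{eq:huh} as an equation in $(a,b)$, complete the square for $y\neq 0$ to get the circle of radius $\tfrac{1}{2y^2}$ centered at $\bigl(\tfrac{x}{y},\tfrac{1}{2y^2}\bigr)$ (tangent to $\{b=0\}$), and handle $y=0$ separately to get the horizontal line $b=x^2$; the paper's proof stops there with the remark that these exhaust the horocycles.

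Your further observation is correct and goes beyond what the paper checks: the map $\r\mapsto(\text{horocycle})$ is visibly invariant under $\r\mapsto -\r$ (the center $(x/y,1/(2y^2))$ and the height $x^2$ see only the ratio and the squares), so it is two-to-one rather than a bijection from $\R^2\setminus\{0\}$. The paper's proof establishes surjectivity only and does not address this; the honest statement is a bijection from $(\R^2\setminus\{0\})/\{\pm 1\}$ onto the space of (unoriented) horocycles, consistent with the $\Z_2$ remedy the paper itself flags in Remark~\ref{remark:generalized-path-geometry} for the horocycle geometry. So there is no gap in your argument---you have caught a small imprecision in the lemma as stated.
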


\begin{proof}
For each  $(x,y)\in\R^2\setminus 0$, equation \eqref{eq:huh},
$$
 x^2 -2 a x y+ (a^2+b^2 )y^2=b,
$$ defines   in the upper half-plane $\{(a,b)\st b>0\}$  either the circle of radius ${1\over 2y^2}$ centered at $\left({x\over y}, {1\over 2y^2}\right)$ if $y\neq 0$, or the   horizonal line $b=x^2$ if $y=0.$ These are precisely all the horocycles of the   upper half plane model   of the hyperbolic plane.
\end{proof}

It follows that  the horocycle path geometry in $\H$ is dual to the path geometry in $\R^2\setminus \{0\}$ of Hooke ellipses of fixed area. Thus we can use the analysis of the previous section to determine the projection of the chains to $\H$.

\begin{proposition}\label{prop:horc}
Each chain of the horocycle path geometry, up to the action of $\SLt$, projects to a curve in the hyperbolic plane,  given in the upper half-plane model $\{(x,y)\st y>0\}$  by
%
\begin{multline}\label{eq:horoc}
(x^2+y^2)^2 - [4 c x +(c^2+4) y] (x^2+y^2)
+ (6 c^2-2) x^2+2 c^3 x y + 6 y^2 \\ - 4 c (c^2-1) x - (c^4-3 c^2+4) y + (c^2-1)^2 = 0
\end{multline}
where $c\neq 0.$ See Figure \ref{fig:hc2}.%
This curve is the    projection  of a chain in $\SLt$, the  solution to  equations \eqref{eq:sl2chains1} that passes through   $\id\in \SLt$. The projection of this chain to  the Hooke plane is the Hooke ellipse $(x-cy)^2+y^2=1$.   The   horocycles  along this chain, in the upper half plane model,   all pass through $(c,1)$, the point corresponding to  this Hooke ellipse.   The chains corresponding to $c$ and $-c$ are congruent via
 an outer automorphism of $\SLt$ (conjugation by $\diag(-1,1)\in\GL_2(\R)$), acting  by reflection about the $y$-axis.

\end{proposition}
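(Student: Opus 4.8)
The plan is to reduce the statement to the Hooke chains computed in the previous subsection, push the resulting curve in $\SLt\cong M$ forward to $\H$, and eliminate the parameter. The first point is that the chains of a path geometry $(M,L_1,L_2)$, as unparametrized curves in $M$, coincide with those of its dual $(M,L_2,L_1)$: replacing $(L_1,L_2,\eta^3)$ by $(L_2,L_1,-\eta^3)$ and the adapted coframe $(\eta^1,\eta^2)$ by $(\eta^2,\eta^1)$ reproduces the same $\R^*$-reduction $B\to M$, now with $\omega^1,\omega^2$ interchanged and $\omega^3$ negated; feeding this into the structure equations \eqref{eq:str3}--\eqref{eq:str4} gives $\alpha\mapsto-\alpha$ and $K\mapsto K$, hence $\sigma\mapsto-\sigma$ by \eqref{eq:feff2}, so the Fefferman metric $\grm=\omega^1\cdot\omega^2+\omega^3\cdot\sigma$ of \eqref{eq:feff3} and the identification $h$ of \eqref{eq:hiso} are unchanged, and therefore so is the family of chains in $M$. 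Thus the chains of the horocycle geometry are the Hooke chains, and by Proposition \ref{prop:hc} each is, up to left translation, the curve $g(\tau)=(\r(\tau)\,|\,\hb(\tau))\in\SLt$ with $\r=e^{i\tau}$, $\hb=e^{i\tau}(-c\sec2\tau+i)$, $c\neq0$. Since $g(0)=\left(\begin{smallmatrix}1&-c\\0&1\end{smallmatrix}\right)$, left-translating by $g(0)^{-1}$ yields the solution $\tilde g(\tau):=g(0)^{-1}g(\tau)$ of \eqref{eq:sl2chains1} through $\id$ at $\tau=0$, with $\tilde\r(\tau)=g(0)^{-1}e^{i\tau}=(\cos\tau+c\sin\tau,\,\sin\tau)^t$.

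Next I would make the projection $M\cong\SLt\to\H$ explicit. It is $g\mapsto g\cdot\cE_0$, which sends $g$ to the Hooke ellipse with symmetric matrix $(gg^t)^{-1}$; in the upper half-plane coordinates of \eqref{eq:uhp} an ellipse with matrix $\left(\begin{smallmatrix}E&F\\F&G\end{smallmatrix}\right)$ is the point $(x,y)=(-F/E,\,1/E)$. Since $\det\tilde g=1$ this reads $x=(\tilde g\tilde g^t)_{12}/(\tilde g\tilde g^t)_{22}$, $y=1/(\tilde g\tilde g^t)_{22}$, where $\tilde g\tilde g^t=\tilde\r\tilde\r^t+\tilde\hb\tilde\hb^t$ and $\tilde\hb=g(0)^{-1}\hb$ --- explicit functions of $\tau$ and $c$. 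Substituting $t=\tan\tau$ makes $x$ and $y$ rational in $t$, and eliminating $t$ by a resultant produces a polynomial relation in $x,y,c$ which, after removing extraneous factors, should be exactly \eqref{eq:horoc}; its leading term $(x^2+y^2)^2$ then displays the projected chain as a bicircular quartic, as announced in the abstract. \emph{This elimination is the step I expect to be the main obstacle}: it is lengthy but mechanical, and the care goes into (i) discarding the spurious factors introduced by the substitution $t=\tan\tau$ and by the poles $2\tau\in\tfrac{\pi}{2}+\pi\Z$ of $\sec2\tau$, and (ii) checking that the arcs traced as $\tau$ varies exhaust the real locus of the quartic. Everything else is short.

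For the remaining assertions: by Lemma \ref{lemma:proj}, $\tilde\r''=-\tilde\r$ with $[\tilde\r,\tilde\r']=1$, so $\tilde\r(\tau)$ runs over a Hooke ellipse of area $\pi$, and since $\tilde\r=(\cos\tau+c\sin\tau,\sin\tau)^t$ this ellipse is $(x-cy)^2+y^2=1$ --- as it must be by Proposition \ref{prop:ppg} and Theorem \ref{thm:proj_chains}. By the lemma identifying points of $\R^2\setminus\{0\}$ with horocycles, the horocycle attached to the chain point $\tilde g(\tau)$ is the set of Hooke ellipses through $\tilde\r(\tau)$; as every $\tilde\r(\tau)$ lies on the fixed ellipse $(x-cy)^2+y^2=1$, that ellipse lies on all these horocycles, i.e.\ they all pass through the point of $\H$ it represents, which by \eqref{eq:uhp} is $(c,1)$.

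Finally, conjugation by $D=\diag(-1,1)\in\GLt$ is an outer automorphism of $\SLt$, and since $D$ carries area-$\pi$ Hooke ellipses to area-$\pi$ Hooke ellipses (each such ellipse being centrally symmetric, whence $q\in\cE\iff-q\in\cE$) it is a symmetry of the Hooke path geometry, hence of its dual, and so permutes chains. A short computation with the normal form of Proposition \ref{prop:hc} shows it sends the chain with parameter $c$ to the one with parameter $-c$; and because $D$ fixes $\cE_0$ it acts on $\H=\{g\cE_0\}$ by $\cE\mapsto D\cE$, i.e.\ $(E,F,G)\mapsto(E,-F,G)$, which in the upper half-plane model is $(x,y)\mapsto(-x,y)$ --- reflection about the imaginary axis. (Equivalently and more cheaply, one checks directly that the substitution $x\mapsto-x$, $c\mapsto-c$ leaves \eqref{eq:horoc} invariant.)
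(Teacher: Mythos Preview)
Your proof is correct and follows essentially the same route as the paper: write the chain through $\id\in\SLt$ explicitly, project it to the upper half-plane, and eliminate the parameter by computer algebra; your projection via $(gg^t)^{-1}$ is equivalent to the paper's M\"obius action on $i$, and your $\tilde g(\tau)$ agrees with the matrix~\eqref{eq:gchain}. In fact your write-up is more complete than the paper's: your opening paragraph supplies the (easy but not stated) justification that dual path geometries share the same chains, and your arguments for the Hooke-plane projection, for all horocycles passing through $(c,1)$, and for the $c\leftrightarrow-c$ reflection symmetry are spelled out, whereas the paper's proof addresses only the derivation of~\eqref{eq:horoc}.
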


\begin{figure}
\centering
\includegraphics[width=.3\textwidth]{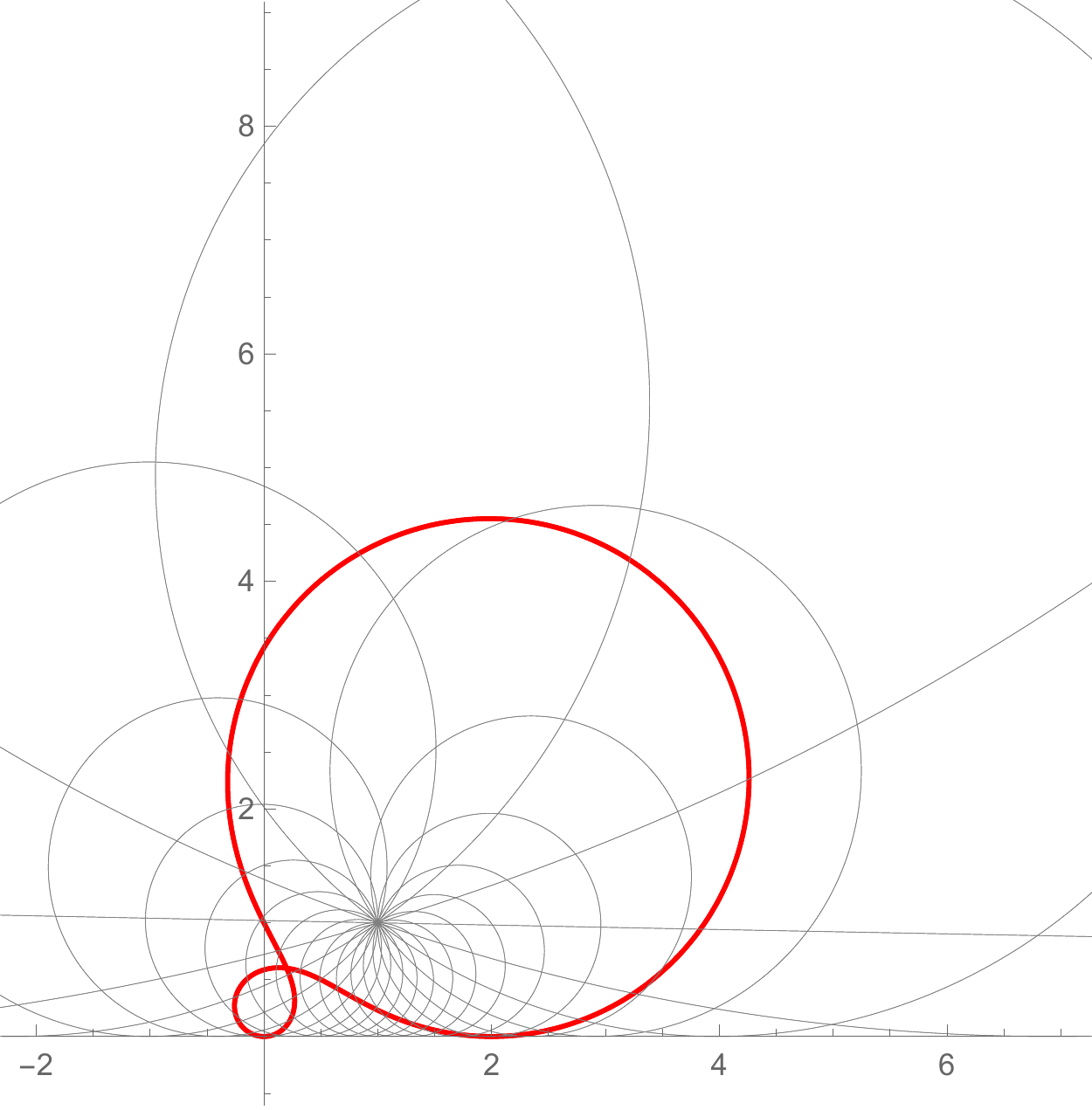}\quad
\includegraphics[width=.3\textwidth]{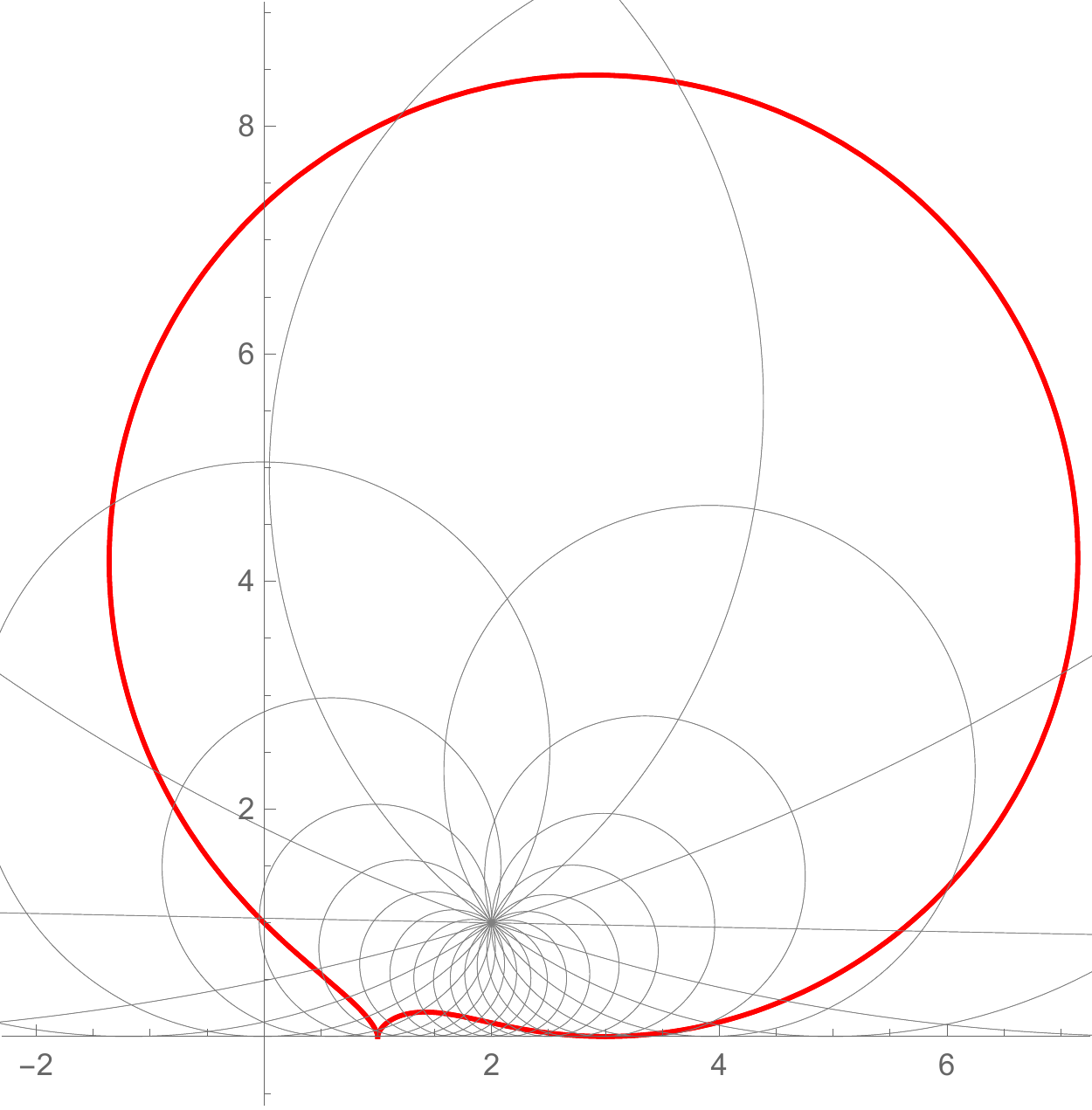}\quad
\includegraphics[width=.3\textwidth]{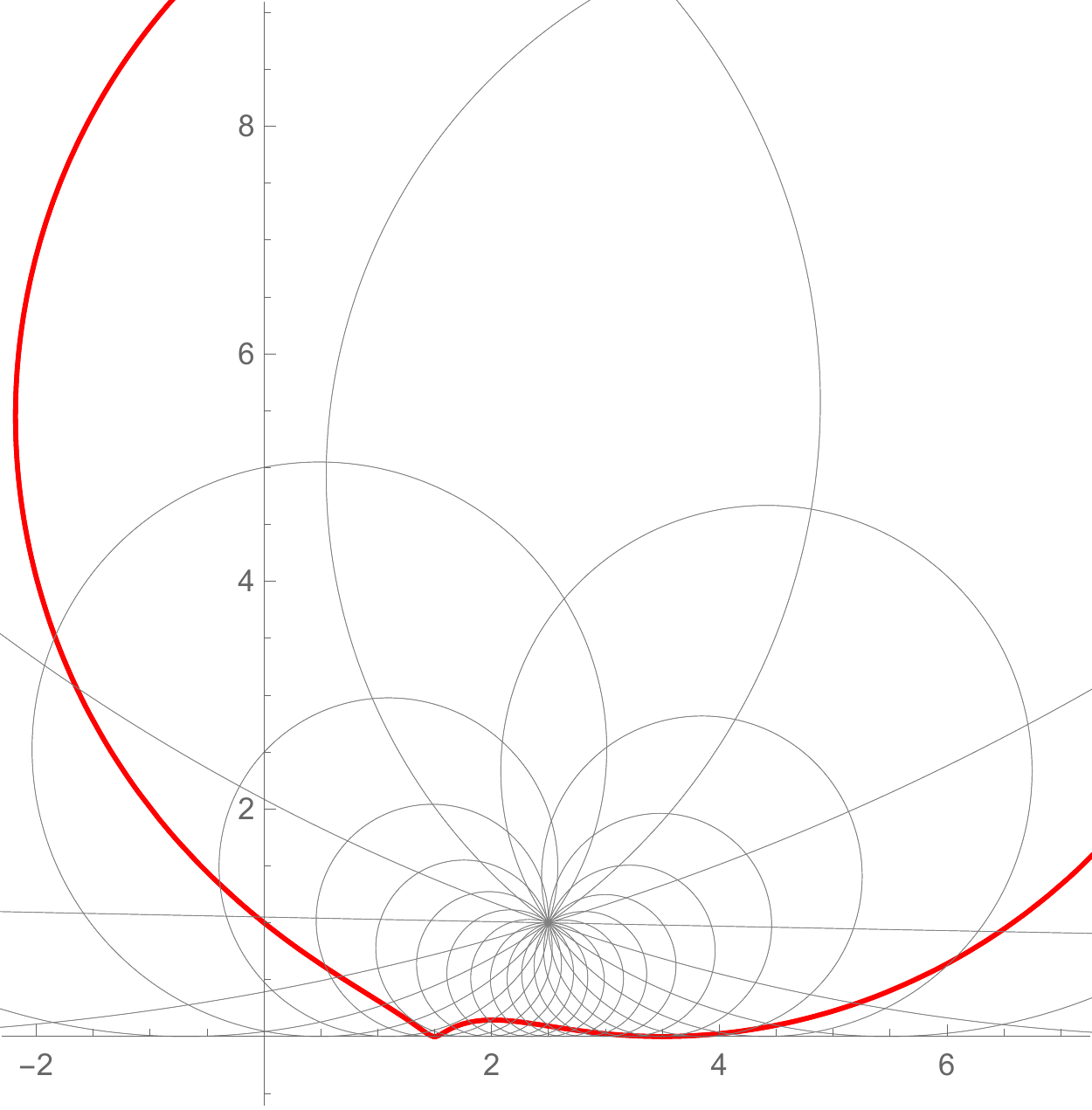}
\caption{Horocycle chains,  given by Proposition \ref{prop:horc}, projected to the hyperbolic plane (the upper half plane model), are rational bicircular quartics.  Left: {\em crunodal}  (one node), $|c|<2.$ Middle: {\em cuspidal} (one cusp), $c=2$. Right: {\em acnodal} (smooth), $|c|>2$.  }
\label{fig:hc2}
\end{figure}

\begin{proof}
Using $\r''=-\r$ and  $\r'=c(\sec2\tau)\r+\hb$ (Lemma \ref{lemma:proj} and equation \eqref{eq:sl2chains1}), the chain  $g(\tau)$   in $\SLt$ with $g(0)=\id$ is

\be\label{eq:gchain}
g(\tau)=\left(
\begin{array}{cc}
 \cos \tau+c \sin \tau & -\sin \tau \left(\sec (2 \tau ) c^2+\tan (2 \tau ) c+1\right) \\
 \sin \tau & \cos \tau-c \sec (2 \tau ) \sin \tau \\
\end{array}
\right).
\ee
The  projection  of this chain to  $\H$ is obtained by acting by $g(\tau)$ on the point in $\H$ corresponding to  the Hooke ellipse $\cE_0=\{x^2+y^2=1\}.$
One can check that the parametrization of $\H$ by the upper half-plane in equation \eqref{eq:uhp} is $\SLt$-equivariant, so one can act instead by $g(\tau)$ via fractional linear transformations on  $(0,1)$, the point in the upper half-plane corresponding to $\cE_0$. Reverting to  $\phi=2\tau$, the outcome is
$$
(x,y)={\left(c^{2}\left[(c +2 \sin \phi  ) \cos \phi -c -\sin \phi\right],\ -2  \cos^{2}\phi\right)
\over
-2  \cos^2 \phi  +c (c +2 \sin \phi  ) \cos \phi -c^2. }
$$
Eliminating $\phi$ in the above equation (we used Maple for this), one obtains equation \eqref{eq:horoc}.
\end{proof}

\begin{remark}The curves of Proposition \ref{prop:horc} are examples of {\em bicircular quartics}, a remarkable class of plane curves introduced by J. Casey in 1871 \cite{Ca}. They have many equivalent geometric and algebraic definitions, the simplest being {\em the inversion of a conic} (with respect to a circle).

\end{remark}

\end{document}